\newcommand{\mb}[1]{\ensuremath{\mathbb{#1}}}
\newcommand{\R}{\mb{R}}
\newfont{\bl}{msbm10 scaled \magstep2}
\newcommand{\beq}{\begin{equation}}
\newcommand{\eeq}{\end{equation}}
\newcommand{\notmid}{\mid\kern-0.5em\not\kern0.5em}
\newcommand{\eps}{\varepsilon}
\newcommand{\vphi}{\varphi}
\newtheorem{thm}{Theorem}[section]
\newtheorem{lem}[thm]{Lemma}
\newtheorem{cor}[thm]{Corollary}
\newtheorem{rem}[thm]{Remark}
\newtheorem{defi}[thm]{Definition}
\newcommand{\diamfin}{\mathrm{diam_{fin}}}
\newcommand{\LpLS}{Lorentzian pre-length space }
\newcommand{\LpLSn}{Lorentzian pre-length space}
\newcommand{\bx}{\bar{x}}
\newcommand{\by}{\bar{y}}
\newcommand{\bz}{\bar{z}}
\newcommand{\bp}{\bar{p}}
\newcommand{\bq}{\bar{q}}
\newcommand{\btau}{\bar{\tau}}
\newcommand{\hx}{\hat{x}}
\newcommand{\hy}{\hat{y}}
\newcommand{\hz}{\hat{z}}
\newcommand{\tx}{\tilde{x}}
\newcommand{\balp}{\bar{\alpha}}
\newcommand{\bbet}{\bar{\beta}}
\newcommand{\bgam}{\bar{\gamma}}
\newtheorem{pop}[thm]{Proposition}
\theoremstyle{definition} 
\newcommand{\thistheoremnam}{}
\newtheorem*{genericthm*}{\thistheoremnam}
\newenvironment{chapt*}[1]
  {\renewcommand{\thistheoremnam}{#1}%
   \begin{genericthm*}}
  {\end{genericthm*}}
\renewcommand{\labelenumi}{(\roman{enumi})}
\newcommand{\lm}[1]{\mathbb{L}^2(#1)}
\newcommand{\ma}{\measuredangle}
\newcommand{\arcosh}{\mathrm{arcosh}}
\DeclareMathOperator{\md}{md}
\DeclareMathOperator{\sn}{sn}
\DeclareMathOperator{\cn}{cn}
\renewcommand{\labelenumi}{(\roman{enumi})}
\renewcommand\theenumi\labelenumi
\title{On curvature bounds in Lorentzian length spaces}
\author{Tobias Beran\thanks{{\tt tobias.beran@univie.ac.at}, Faculty of Mathematics, University of Vienna, Austria.}\ , Michael Kunzinger\thanks{{\tt michael.kunzinger@univie.ac.at},  Faculty of Mathematics, University of Vienna, Austria.}\ and Felix Rott\thanks{{\tt felix.rott@univie.ac.at}, Faculty of Mathematics, University of Vienna, Austria.}}
\date{}
\begin{document}
\maketitle

\begin{abstract}
We introduce several new notions of (sectional) curvature bounds for Lorentzian pre-length spaces: On the one hand, we provide convexity/concavity conditions for the (modified) time separation function, and, on the other hand, we study four-point conditions, which are suitable also for the non-intrinsic setting. 
Via these concepts we are able to establish (under mild assumptions) the equivalence of all previously known formulations of curvature bounds. 
In particular, we obtain the equivalence of causal and timelike curvature bounds as introduced in \cite{KS18}. 
\bigskip

\noindent
\emph{Keywords:} Metric geometry, Lorentzian geometry, Lorentzian length spa\-ces, hyperbolic angles, synthetic curvature 
bounds
\medskip

\noindent
\emph{MSC2020:}
53C23, 
51K10, 
53C50, 
53B30 

\end{abstract}
\tableofcontents
\section{Introduction}

The theory of Lorentzian length spaces aims to give a synthetic description of Lorentzian geometry. 
Inspired by the transformative effect its metric predecessor (cf., e.g.,  \cite{BBI01,BH99,AKP19}) has had on the field of Riemannian geometry, after its introduction in \cite{KS18} the theory has quickly branched out from Lorentzian Alexandrov geometry (e.g., \cite{AGKS19,BS22,BORS23})
into a variety of fields, in particular into Optimal Transport and Metric Measure Geometry (e.g., \cite{CM20,McS22,B23}),  
causality theory (e.g., \cite{ACS20,BGH21,KS21}), and General Relativity (e.g., \cite{GKS19,MS23,McC23}). 

During the initial development of the theory, one of the main goals was to establish a synthetic version of sectional curvature bounds via triangle comparison, a characterization which in the smooth setting is known even for semi-Riemannian manifolds due to \cite{AB08}. 
These descriptions of curvature bounds are also a topic of substantial interest in Alexandrov geometry. 
Indeed, for metric spaces there is an abundance of different formulations for (sectional) curvature bounds, cf.\ \cite{BBI01, BH99, AKP19}. 

Some of these have been added to the Lorentzian repertoire as well, such as, for example, the so-called monotonicity condition. 
In fact, both \cite{BS22} and \cite{BMS22}, more or less simultaneously, introduced the concept of hyperbolic angles into the synthetic Lorentzian theory, and gave a formulation of timelike curvature bounds expressed via the monotonic behaviour of angles. 
In \cite{BMS22}, there is also a formulation using angles directly in relation to their comparison angles, but only for lower curvature bounds, and in \cite{BS22}, angle comparison is only obtained as an implication of ordinary curvature bounds and not vice versa.

As is evident from Alexandrov geometry, having a wide array of different characterizations at one's disposal is vital to producing a rich and flourishing theory of synthetic geometry. 
In this work we collect all the currently known approaches to (sectional) curvature bounds in Lorentzian pre-length spaces, add several new ones, and prove equivalence of all these notions under suitable assumptions. 
The precise interdependence of all of these concepts will be established in Theorem \ref{thm:allEquiv}. 

\section{Preliminaries}
Since the theory of Lorentzian pre-length spaces is by now quite well established, we are not going to repeat the basic definitions, instead referring to \cite{KS18} for the fundamentals, and to \cite{BS22, BMS22} for more in-depth discussions of angles and some of the curvature bounds discussed here. 
\begin{rem}[Notations and conventions]
For the sake of readability and consistency in notation, we collect some generalities here. Let $X$ denote a \LpLSn. 
\begin{enumerate} 
    \item Unless explicitly stated otherwise, $K$ is assumed to be any real number. 
    It symbolizes the curvature of the model space, and hence it is only explicitly mentioned when necessary. 
    
    \item By a \emph{distance realizer} we mean a curve that attains the $\tau$-distance between its endpoints, i.e., if $\gamma$ is a causal curve from $x$ to $y$, then $\gamma$ is a distance realizer if $\tau(x,y)=L_{\tau}(\gamma)$. 
    Instead of labeling the curve, we may also use the notation $[x,y]$ for a distance realizer from $x$ to $y$. 
    
    \item By a \emph{timelike triangle} in $X$, we mean a collection of three timelike related points $x \ll y \ll z, \tau(x,z) < \infty$, and three distance realizers pairwise joining them. By an \emph{admissible causal triangle} we mean a collection of three points  $x \leq y \ll z$ or $x \ll y \leq z, \tau(x,z) < \infty$, together with distance realizers between timelike related points. In other words, one of the two short sides is allowed to be null, and if it is, it need not be realized by a curve. When the context allows for it, we may refer to either of these as just a triangle, and both are denoted by $\Delta(x,y,z)$. 

    \item Given a triangle $\Delta(x,y,z)$ in $X$, a \emph{comparison triangle} is a triangle in $\lm{K}$ with the same side lengths. The existence of comparison triangles is established in the so-called Realizability Lemma, see \cite[Lemma 2.1]{AB08}. 

    \item We say that a triangle or hinge (cf.\ Definition \ref{def: hinge} below) in $X$ \emph{satisfies size bounds for $K$} if there exists a (unique) comparison configuration in $\lm{K}$, the Lorentzian model space of constant curvature $K$. Throughout the paper, we assume any such configuration to satisfy size bounds. As will become evident later on, this is precisely the case when the largest $\tau$-value in this configuration is less than $D_K$. 

    \item To increase readability, we are going to mark points arising in comparison triangles with a bar, in comparison hinges with a tilde, and in four-point comparison configurations (cf.\ Definition \ref{def:four-point configs} below) with a hat. 
    That is, if $x \in X$, then the corresponding point in a comparison triangle will be denoted by $\bx$, the corresponding point in a comparison hinge will be denoted by $\tx$, and the corresponding point in a four-point comparison configuration will be denoted by $\hx$. 
    To ease the notational burden we will, however, not mark the time separation function in the comparison spaces by $\bar{\tau}$. 
    Instead, we will always just write $\tau$ since the marking of the arguments will clearly indicate when the time separation function in the model space is considered. 
    
    \item We will use the slightly updated version of curvature bounds introduced in \cite{BNR23}. 
    In order for these conditions to be non-void, i.e., trivially satisfied by pathological spaces where, e.g., $\tau \equiv \infty$, we will always assume $X$ to be chronological. 
    This is not a substantial restriction since any space that satisfies some timelike curvature bound in the original formulation in \cite[Definition 4.7]{KS18} is chronological by definition, as there $\tau$ was supposed to be finite on comparison neighbourhoods. 
    
    \item The notation regarding hyperbolic angles (which are defined as limits) will use curves, while angles in the comparison spaces will use the endpoints of the corresponding curves (as the model spaces are uniquely geodesic, this is consistent). 

    \item Throughout this paper, we introduce several formulations of curvature bounds, all of which are built on the notion of comparison neighbourhoods. We will adhere to the following terminology: a \LpLS is said to have \emph{curvature bounded below (resp.\ above) by $K$} in any of these senses if it is covered by the corresponding $(\geq K)$- (resp.\ $(\leq K)$-)comparison neighbourhoods. Moreover, we say $X$ has \emph{curvature globally bounded below (resp.\ above) by $K$} in any of these senses if $X$ is a corresponding $(\geq K)$- (resp.\ $(\leq K)$-)comparison neighbourhood. 
    In general, implications on comparison neighbourhoods without further assumptions on the neighbourhoods yield implications on curvature bounds for $X$, so that we will not highlight this in every statement. 
    However, if this only holds under additional assumptions, we will explicitly formulate the implication for curvature bounds on $X$.
\end{enumerate}

\end{rem}
For completeness, we briefly repeat the most important definitions surrounding hyperbolic angles, following \cite{BS22}. 
\begin{defi}[$K$-comparison angles and sign]
Let $X$ be a \LpLSn, $\Delta(x,y,z)$ an admissible causal triangle in $X$, and $\Delta(\bx,\by,\bz)$ a comparison triangle in $\lm{K}$ for $\Delta(x,y,z)$. Assume that, say, $x$ is adjacent to two timelike sides (the following definition clearly works for any other vertex who is between two timelike sides). 
\begin{itemize}
    \item[(i)] The \emph{$K$-comparison angle} at $x$ is defined as the ordinary hyperbolic angle at $\bx$ between $\by$ and $\bz$:
\begin{equation}
    \tilde{\ma}_x^{K}(y,z):=\ma_{\bx}^{\lm{K}}(\by,\bz)=\arcosh(|\langle \gamma_{\bx \by}'(0), \gamma_{\bx \bz}'(0) \rangle|) \, , 
\end{equation}
where we assume the mentioned geodesics to be unit speed parametrized. 
\item[(ii)] The \emph{sign} $\sigma$ of a $K$-comparison angle is the sign of the corresponding inner product (in the $-,+,\cdots,+$ convention). That is, in this notation, the sign is $-1$ if the angle is measured at $x$ or $z$ and $1$ if the angle is measured at $y$.
\item[(iii)] The \emph{signed $K$-comparison angle} is defined as $\tilde{\ma}_x^{K,S}(y,z):=\sigma \tilde{\ma}_x^{K}(y,z)$.
\end{itemize}
\end{defi}

\begin{defi}[Angles]
    Let $X$ be a \LpLS and let $\alpha$ and $\beta$ be two timelike curves of arbitrary time orientation emanating 
    from 
    $\alpha(0)=\beta(0) \eqqcolon x$. 
    \begin{enumerate}
    \item The \emph{angle} between $\alpha$ and $\beta$, if it exists, is defined as 
    \begin{equation}
    \label{eq: angle}
        \ma_x(\alpha,\beta) \coloneqq \lim_{s,t \to 0}\tilde{\ma}_x^{0}(\alpha(s),\beta(t)) \, , 
    \end{equation}
    where the limit only takes values of $s$ and $t$ into account for which the triple $(x, \alpha(s),\beta(t)) $ (or some permutation thereof) forms an admissible causal triangle.\footnote{As admissible causal triangles arise as limits of timelike triangles, one can also restrict this to timelike triangles.} 
    \item The \emph{sign} $\sigma$ of an angle is $-1$ if $\alpha$ and $\beta$ have the same time orientation and $1$ otherwise. The \emph{signed angle} is defined as $\ma_x^S(\alpha,\beta) \coloneqq \sigma \ma_x(\alpha,\beta)$. 
    \end{enumerate}
\end{defi}
\begin{rem}
Note that one could also look at angles defined using any $K$ instead of $0$ in \eqref{eq: angle}. 
However, the limit is the same regardless of the model space in which it is considered due to \cite[Proposition 2.14]{BS22}. Although this reference uses strong causality to ensure size bounds, this is not actually necessary, the condition from Definition \ref{def-cb-tr}(i) is sufficient: As $\tau$ is continuous near $(p,p)$ (since $\tau(p,p)=0 < D_K$) and $\tau^{-1}([0,D_K))$ is open, for two curves $\alpha,\beta$ emanating from $p$, $(\alpha(t),\beta(t))$ will initially stay in $\tau^{-1}([0,D_K))$. This is why we will usually drop the superscript $K$ in the comparison angle and just write $\tilde{\ma}_x(y,z)$. 
Similarly, we will write $\ma_{\bx}(\by,\bz)$ instead of $\ma_{\bx}^{\lm{K}}(\by,\bz)$ for angles between points in $\lm{K}$ (which do not necessarily arise as comparison angles). 
\end{rem}
\begin{defi}[Hinges and comparison hinges]
\label{def: hinge}
    Let $X$ be a \LpLSn. Let $\alpha: [0,a] \to X$ and $\beta: [0,b] \to X$ be two timelike distance-realizers emanating from the same point $x:=\alpha(0)=\beta(0)$. 
    Then we say that $\alpha$ and $\beta$ and their associated angle form a \emph{hinge\footnote{In \cite[Definition 2.11]{BS22}, such a constellation is only called a hinge if the angle exists, meaning that the $\limsup$ is a limit and is finite. Here, we drop these restrictions, see Definition \ref{def-cb-hinge}(iv) below.} at $x$}, and denote it by
    $(\alpha, \beta)$.
    In particular, if the angle $\ma_x(\alpha, \beta)$ is finite, then by a \emph{comparison hinge} in $\lm{K}$, denoted\footnote{As the sides of a comparison hinge are unique, we may also denote a comparison hinge by its endpoints (and the vertex where the angle is measured), i.e., $(\tilde{\alpha}(a), \tx, \tilde{\beta}(b))$.} by $(\tilde \alpha, \tilde \beta)$, we mean a constellation $\tx$ together with two distance-realizers $\tilde{\alpha}$ and $\tilde{\beta}$ emanating from that point, such that they have the same length (and time-orientation) as $\alpha$ and $\beta$, respectively, and such that the angle between them is the same, i.e., $\ma_x^S(\alpha, \beta)=\ma_{\tx}^S(\tilde{\alpha}(a), \tilde{\beta}(b))$.
\end{defi}
Another concept we shall require is the so-called finite diameter of a \LpLSn. Introduced in \cite{BNR23}, this number essentially bounds the length of geodesics for which it is possible to implement comparison methods. 
\begin{defi}[Finite diameter]
    Let $X$ be a \LpLSn. 
    \begin{enumerate}
        \item     The \emph{finite diameter} of $X$ is 
    \begin{equation}
        \diamfin=\sup (\{ \tau(x,y) \mid x \ll y \} \setminus \{ \infty \}) \, , 
    \end{equation}
    i.e., the supremum of all values $\tau$ takes except $\infty$. 
    \item By $D_K$ we denote the finite diameter of $\lm{K}$. In particular, 
    \begin{equation}
        D_K=\diamfin(\lm{K})= 
        \begin{cases}
            \infty, & \text{ if } K \geq 0 \, , \\
            \frac{\pi}{\sqrt{-K}}, & \text{ if } K < 0 \, .
        \end{cases}
    \end{equation}
    \end{enumerate}
\end{defi}
Note that the formula of $D_K$ is pleasantly similar to the diameter of the Riemannian model spaces in metric geometry. 
Finally, we introduce the concept of being (locally) $r$-geodesic, another notion very similar to its metric counterpart. 
\begin{defi}[$r$-geodesic]
\label{def: r-geodesic}
Let $X$ be a \LpLS and let $0<r\leq \infty$. 
\begin{enumerate}
    \item A subset $U \subseteq X$ is called $r$-geodesic if for all $p \ll q$ in $U$ with $\tau(p,q)<r$ there exists a distance realizer in $U$ connecting them. In particular, if $U$ is $\infty$-geodesic, then this corresponds to the original definition of being geodesic, cf.\ \cite[Definition 3.27]{KS18}, without the assumption of the existence of null realizers. 
    \item If one additionally assumes the existence of null realizers, we call $U$ \emph{causally $r$-geodesic}.
    \item $X$ is called \emph{locally (causally) $r$-geodesic} if every point has a neighbourhood which is (causally) $r$-geodesic. 
\end{enumerate}
\end{defi}
\section{Curvature comparison for \LpLSn s}
\label{sec-cur-com}
In this section, we recall and amend the currently known characterizations of timelike curvature bounds. In particular, for those cases where, as of yet, only implications between certain notions are available, we show full equivalence. Timelike curvature bounds in the setting of Lorentzian pre-length spaces were first introduced in \cite[Definition 4.7]{KS18} and slightly updated in \cite[Definition 2.7]{BNR23}. 
\begin{defi}[Curvature bounds by timelike triangle comparison]
\label{def-cb-tr}
Let $X$ be a \LpLSn. An open subset $U$ is called a $(\geq K)$- (resp.\ $(\leq K)$-)comparison neighbourhood) in the sense of \emph{timelike triangle comparison} if:
\begin{enumerate}
\item $\tau$ is continuous on $(U\times U) \cap \tau^{-1}([0,D_K))$, and this set is open. 
\item $U$ is $D_K$-geodesic.
\item \label{TLCB.item3} Let $\Delta (x,y,z)$ be a timelike triangle in $U$, with $p,q$ two points on the sides of $\Delta (x,y,z)$. Let $\Delta(\bar{x}, \bar{y}, \bar{z})$ be a comparison triangle in $\lm{K}$ for $\Delta (x,y,z)$ and $\bar{p},\bar{q}$ comparison points for $p$ and $q$, respectively. Then 
\begin{equation}
\label{eq: timelike triangle comparison inequality}
\tau(p,q) \leq \tau(\bar{p},\bar{q}) \quad \text{ (resp.\ } \tau(p,q) \geq \tau(\bar{p}, \bar{q}) \text{)} \, .
\end{equation}
\end{enumerate}
Note that within a $(\geq K)$-comparison neighbourhood, $p \ll q$ implies $\bar{p} \ll \bar{q}$, and within a $(\leq K)$-comparison neighbourhood, $\bar{p} \ll \bar{q}$ implies $p \ll q$. 
\end{defi}

As a first different formulation, we mention one-sided triangle comparison. This was originally introduced in \cite{BMS22} for lower curvature bounds only, but can easily be adapted to work for upper curvature bounds as well. 

\begin{defi}[Curvature bounds by one-sided timelike triangle comparison]
\label{def-cb-tr-onesided}
Let $X$ be a \LpLSn. An open subset $U$ is called a $(\geq K)$- (resp.\ $(\leq K)$-)comparison neighbourhood) in the sense of \emph{one-sided timelike triangle comparison} if:
\begin{enumerate}
\item $\tau$ is continuous on $(U\times U) \cap \tau^{-1}([0,D_K))$, and this set is open. 
\item $U$ is $D_K$-geodesic.
\item \label{1TLCB.item3} Let $\Delta (x,y,z)$ be a timelike triangle in $U$. Let $p$ be a point on one side of the triangle and denote by $v \in \{x,y,z\}$ the vertex opposite of $p$. 
Let $\Delta(\bar{x}, \bar{y}, \bar{z})$ be a comparison triangle in $\lm{K}$ for $\Delta (x,y,z)$ and let $\bar{p}$ be a comparison point for $p$. Then\footnote{Note that as $X$ is supposed to be chronological, at most one of the $\tau$-values in $X$ is positive in each case. } 
\begin{align*}
& \tau(p,v) \leq \tau(\bar{p},\bar{v}) \text{ and } \tau(v,p) \leq  \tau(\bar{v}, \bar{p}) \\
\quad \text{ (resp.\ } 
& \tau(p,v) \geq \tau(\bar{p},\bar{v}) \text{ and } \tau(v,p) \geq \tau(\bar{v}, \bar{p})
\text{)} \, .
\end{align*}
\end{enumerate}
\end{defi}

\begin{pop}[One-sided triangle comparison]
\label{rem: one-sided triangle comparison and triangle comparison are equivalent}
Let $U$ be an open subset in a \LpLS $X$. 
Then $U$ is a $(\geq K)$- (resp.\ $(\leq K)$-)comparison neighbourhood in the sense of timelike triangle comparison if and only if it is a $(\geq K)$- (resp.\ $(\leq K)$-)comparison neighbourhood in the sense of one-sided timelike triangle comparison. 
\end{pop}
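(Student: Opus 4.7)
The direction from triangle comparison to one-sided triangle comparison is immediate: in \eqref{eq: timelike triangle comparison inequality} choose $q$ to be the opposite vertex $v$, which is a comparison point of itself, and the inequalities of Definition \ref{def-cb-tr-onesided} drop out.

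For the converse, fix a timelike triangle $\Delta(x,y,z)$ in $U$, points $p,q$ on two of its sides, and a comparison triangle $\Delta(\bar x,\bar y,\bar z)\subset\lm{K}$ with comparison points $\bar p,\bar q$. If $p$ and $q$ lie on the same side, the equality $\tau(p,q)=\tau(\bar p,\bar q)$ holds automatically from additivity of length along the distance realizer and its image in $\lm{K}$; if $p$ or $q$ coincides with a vertex, the claim is literally the one-sided hypothesis. We may therefore assume $p$ is strictly interior to $[x,y]$ and $q$ strictly interior to $[y,z]$, so that $p\ll y\ll q$. A first application of the one-sided condition, to $\Delta(x,y,z)$ with point $p$ and opposite vertex $z$, yields the appropriate inequality between $\tau(p,z)$ and $\tau(\bar p,\bar z)$. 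This allows us to form the timelike sub-triangle $\Delta(p,y,z)$: a distance realizer from $p$ to $z$ exists by $D_K$-geodesicity of $U$, and the size bound $\tau(p,z)<D_K$ follows from $\tau(p,z)\leq\tau(x,z)<D_K$ via the Lorentzian reverse triangle inequality (in $X$ for $(\geq K)$, in $\lm{K}$ for $(\leq K)$). Let $\Delta(\bar p',\bar y',\bar z')$ denote its comparison triangle and $\bar q'$ the comparison point of $q$ on $[\bar y',\bar z']$. A second application of the one-sided condition, to $\Delta(p,y,z)$ with point $q$ and opposite vertex $p$, then compares $\tau(p,q)$ with $\tau(\bar p',\bar q')$.

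It remains to link $\tau(\bar p',\bar q')$ with $\tau(\bar p,\bar q)$ inside $\lm{K}$. Since $\bar p$ lies on the geodesic $[\bar x,\bar y]$, the hinge at $\bar y$ in the sub-triangle $\Delta(\bar p,\bar y,\bar z)$ has the same angle as in $\Delta(\bar x,\bar y,\bar z)$. The Lorentzian law of cosines in $\lm{K}$ shows that, with the two leg lengths $\tau(p,y)$ and $\tau(y,z)$ held fixed, this hyperbolic angle is a strictly monotone function of the opposite side length; the first one-sided inequality, relating $\tau(p,z)$ and $\tau(\bar p,\bar z)$, therefore translates into a comparison between the angles at $\bar y'$ and $\bar y$. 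By construction the hinges $(\bar p,\bar y,\bar q)$ and $(\bar p',\bar y',\bar q')$ share the leg lengths $\tau(p,y)$ and $\tau(y,q)$, so the converse direction of the same monotonicity turns the angle comparison into the desired comparison between $\tau(\bar p,\bar q)$ and $\tau(\bar p',\bar q')$. Chaining the two one-sided inequalities with this transfer step produces the full triangle comparison inequality \eqref{eq: timelike triangle comparison inequality}. The main delicacy is bookkeeping: one must confirm that the sign conventions for Lorentzian hinges make the law-of-cosines monotonicity run in the correct direction in both the $(\geq K)$ and $(\leq K)$ cases, and that every auxiliary triangle constructed along the way meets the size bounds required to apply the Realizability Lemma.
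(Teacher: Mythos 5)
Your argument for the configuration you treat ($p$ strictly inside $[x,y]$, $q$ strictly inside $[y,z]$, shared vertex $y$) is essentially sound: it is the standard subdivision-plus-law-of-cosines-monotonicity argument, i.e.\ the case the paper simply delegates to \cite[Proposition 4]{BMS22}. The trivial direction, the same-side case and the vertex cases are also fine.

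However, there is a genuine gap: the reduction ``we may therefore assume $p$ is strictly interior to $[x,y]$ and $q$ strictly interior to $[y,z]$'' is not justified. After excluding same-side and vertex configurations, the two points may also lie on the pairs of sides $\{[x,y],[x,z]\}$ or $\{[y,z],[x,z]\}$, i.e.\ one of them may lie on the \emph{longest} side. This case cannot be relabelled into yours: the shared vertex is then $x$ (or $z$), where both legs have the same time orientation, so the sign of the comparison angle and the direction of the law-of-cosines monotonicity are opposite to those at $y$; this is exactly why upper and lower curvature bounds require different treatment there, and it is the reason Definition \ref{def-cb-tr-onesided} demands the $\tau$-inequalities even when the point and the opposite vertex are not timelike related. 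Moreover, in this configuration $p$ and $q$ need not be causally related at all, so one must first dispose of the case $\tau(\bp,\bq)=0$ and, when $\bp\ll\bq$, run the argument through the subtriangle $\Delta(x,p,z)$, applying law-of-cosines monotonicity twice at the shared vertex $\bx$ before invoking the one-sided hypothesis in that subtriangle. This missing case is precisely the content of the paper's proof of Proposition \ref{rem: one-sided triangle comparison and triangle comparison are equivalent}; as written, your proposal only establishes the two-short-sides case and therefore does not yield full timelike triangle comparison.
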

\begin{proof}
Concerning the non-trivial direction, the case for $(\leq K)$-comparison neighbourhoods is shown in \cite[Proposition 4]{BMS22}, so we only need to consider upper curvature bounds. 
Moreover, if, say, $p \in [x,y]$ and $q \in [y,z]$, then the proof of \cite[Proposition 4]{BMS22} works for $(\geq K)$-comparison neighbourhoods in complete analogy, by reversing the inequalities between $\tau$-values, as well as between the nonnormalized angles established in \cite[Lemmas 1 \& 2]{BMS22}. 

The other case of one point being on the longest side is where the cases of lower and upper curvature bounds differ (and this is in fact also the reason why we, in contrast to \cite{BMS22}, require the $\tau$-inequalities to hold even if there is no timelike relation between a point and the opposing vertex). 
Let, say, $p \in [x,y]$ and $q \in [x,z]$. 
If $\tau(\bp,\bq)=0$, then \eqref{eq: timelike triangle comparison inequality} is trivially satisfied. So assume $\bp \ll \bq$. Let $\Delta(\bx,\by,\bz)$ be a comparison triangle for $\Delta(x,y,z)$ and let $\Delta(\bx',\bp',\bz')$ be a comparison triangle for the subtriangle $\Delta(x,p,z)$. 
Since $U$ is a $(\geq K)$-comparison neighbourhood in the sense of one-sided timelike triangle comparison, we infer $\tau(p,z) \geq \tau(\bp,\bz)$, hence also $\tau(\bp',\bz')=\tau(p,z) \geq \tau(\bp,\bz)$. 
The triangles $\Delta(\bx',\bp',\bz')$ and $\Delta(\bx, \bp,\bz)$ have two sides of equal length and an inequality between the lengths of their third sides, so we obtain $\ma_{\bx}(\bp,\bz) \geq \ma_{\bx'}(\bp',\bz')$ by law of cosines monotonicity, cf.\ \cite[Remark 2.5]{BS22}. 
Clearly, this further yields 
\begin{equation}
\ma_{\bx}(\bp,\bq) = \ma_{\bx}(\bp,\bz) 
\geq \ma_{\bx'}(\bp',\bz') = \ma_{\bx'}(\bp',\bq') \, .
\end{equation} 
Applying law of cosines monotonicity once more to the subtriangles $\Delta(\bx,\bp,\bq)$ and $\Delta(\bx',\bp',\bq')$ of $\Delta(\bx,\bp,\bz)$ and $\Delta(\bx',\bp',\bz')$, respectively, we obtain $\tau(\bp,\bq) \leq \tau(\bp',\bq')$. By one-sided comparison in the subtriangle $\Delta(x,p,z)$ of $\Delta(x,y,z)$, we get $\tau(p,q) \geq \tau(\bp',\bq')$, hence the desired inequality of $\tau(p,q) \geq \tau(\bp,\bq)$ follows. 
Assuming the opposite timelike relation of $\bq \ll \bp$, the proof works out just the same.
\end{proof}

Throughout this work, the property of a \LpLS being \emph{locally causally closed}, cf.\ \cite[Definition 3.4]{KS18}, is used several times. This should be compared with the slightly weaker notion of being \emph{locally weakly causally closed}, which was introduced in \cite[Definition 2.19]{ACS20} to better resemble the smooth setting below the level of strong causality. It turns out that these two notions are equivalent under the assumption of strong causality, see \cite[Proposition 2.21]{ACS20}. 
In a similar spirit, the following lemma shows in what way one could also work with this latter definition in the present paper. 

\begin{lem}[Causally closed comparison neighbourhoods]
\label{lem:causally_closed_comp_nbhds}
Let $X$ be a locally weakly causally closed and strongly causal \LpLS which has curvature bounded below (resp.\ above) by $K$ in the sense of timelike triangle comparison. 
Then each point has a comparison neighbourhood which is causally closed. 
This will also work in any of the following senses, except the four-point condition. 
\end{lem}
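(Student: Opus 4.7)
The plan is to combine strong causality with local causal closure and the given comparison-neighbourhood structure, producing a small causally convex subneighbourhood that inherits all the required properties. Given $x \in X$, I would first invoke \cite[Proposition 2.21]{ACS20}: since $X$ is locally weakly causally closed and strongly causal, it is in fact locally causally closed in the sense of \cite[Definition 3.4]{KS18}, so $x$ has an open neighbourhood $V$ on whose closure $\leq$ is closed. By the curvature hypothesis, $x$ is also contained in a $(\geq K)$- (resp.\ $(\leq K)$-)comparison neighbourhood $W$ in the sense of Definition \ref{def-cb-tr}. Using strong causality, I then pick an open causally convex $U \ni x$ with $\bar U \subseteq V \cap W$.

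Next I would verify that $U$ is itself a comparison neighbourhood of the same type. Condition (i) of Definition \ref{def-cb-tr} restricts immediately from $W$ to the open subset $U$, since $(U\times U)\cap \tau^{-1}([0,D_K))$ is the intersection of the open sets $(W\times W)\cap \tau^{-1}([0,D_K))$ and $U \times U$. For condition (ii), causal convexity is what does the work: if $p \ll q$ in $U$ with $\tau(p,q)<D_K$, then a $W$-distance realizer $[p,q]$ exists in $W$ by the $D_K$-geodesicity of $W$, and being a causal curve between two points of the causally convex set $U$, it stays inside $U$. Condition (iii) is then automatic, because any timelike triangle in $U$ is a timelike triangle in $W$, with the same side realizers, the same points on the sides and the same comparison configuration in $\lm K$, so the triangle comparison inequality transfers verbatim. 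Causal closure of $U$ finally follows from $\bar U \subseteq \bar V$ together with the choice of $V$: any convergent sequence $p_n \leq q_n$ in $\bar U$ has its limits in $\bar V$ and hence they are $\leq$-related.

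For the closing claim, the same template applies to all subsequent curvature-bound formulations whose content only tests data — triangles, hinges, convexity/concavity of $\tau$ along distance realizers, monotonicity of (signed) angles — that, once based inside $U$, already live inside $W$ thanks to causal convexity. The four-point condition is excluded because its defining quadruples are not tied to distance realizers within $U$, so the comparison condition on $W$ need not descend to the smaller set $U$ in an automatic way.

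I expect the only delicate point of the proof to be the verification of $D_K$-geodesicity of $U$; this is precisely why one needs $U$ causally convex rather than just an arbitrary open subset of $V \cap W$. Everything else is bookkeeping on restrictions of sets and inheritance of properties from $W$ and $V$.
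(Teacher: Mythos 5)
Your construction for the main claim follows essentially the same skeleton as the paper's: intersect a (weakly) causally closed neighbourhood with a comparison neighbourhood, use strong causality to pass to a causally convex subneighbourhood $U$, note that conditions (i) and (iii) of Definition \ref{def-cb-tr} simply restrict, and that $D_K$-geodesicity survives because a realizer produced in $W$ has its endpoints in $U$ and hence stays in $U$ by causal convexity. (A minor point: to arrange $\bar U\subseteq V\cap W$ you should first shrink to a metric ball and take the causally convex set inside it; in fact $U\subseteq V$ already suffices for your closure argument.) Where you genuinely diverge is the causal-closure step: you import it wholesale from \cite[Proposition 2.21]{ACS20}, whereas the paper deliberately re-derives causal closure from weak causal closure, shrinking first so that all pairs in the neighbourhood satisfy $\tau<D_K$ and then using the $D_K$-geodesicity of the comparison neighbourhood to join the causally related pairs $x_n\leq y_n$ by causal realizers, so that the curve-based relation $\leq_V$ of \cite{ACS20} can be compared with the abstract relation $\leq$. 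The availability of such connecting curves is exactly what powers the passage from weak causal closure to causal closure, and in the paper it is supplied by condition (ii) of the comparison neighbourhood rather than by strong causality alone.

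This also exposes the one genuine weak spot in your proposal, namely the last clause of the statement. The four-point condition is not excluded because four-point configurations ``are not tied to distance realizers within $U$'' --- the four-point inequalities restrict from $W$ to any subset verbatim, exactly like the triangle inequalities --- but because comparison neighbourhoods for the four-point condition are not required to be $D_K$-geodesic, so the causal curves needed to run the (weak) causal-closure argument are simply unavailable there. Under your reading, where full local causal closure is obtained from \cite[Proposition 2.21]{ACS20} before any curvature input is used, nothing in your template would fail for the four-point condition either, so your argument does not account for the exception in the statement. This tension indicates that the citation is being asked to do precisely the work (producing curves from abstract causal relations) that the paper's proof supplies by hand via the geodesicity of the comparison neighbourhood, and you should either verify that the hypotheses of \cite[Proposition 2.21]{ACS20} are really met in this generality or argue the closure step directly, as the paper does.
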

\begin{proof}
Let $x\in X$, and let $U_1$ be a weakly causally closed neighbourhood. 
Let $U_2$ be a curvature comparison neighbourhood. 
 According to (i) in Definition \ref{def-cb-tr}, we have that $(U_2\times U_2)\cap \tau^{-1}([0,D_K))$ is open, so we find $U_3$ with $x\in U_3\subseteq U_2$ and $U_3\times U_3\subseteq \tau^{-1}([0,D_K))$. Then by strong causality, there exists a causally convex set $V=\cap_{i=1}^n I(p_i,q_i)$ with $x\in V\subseteq U_1\cap U_3$. We claim that $V$ is a causally closed comparison neighbourhood. 
The properties (i) and (iii) follow as we are just restricting. 
For (ii), note that distance realizers in $U_2$ with endpoints in $V$ are automatically contained in $V$ by causal convexity. 
To see causal closure, let $x_n \leq y_n$ in $V$ with $x_n \to x$ and $y_n \to y$. 
As $x_n, y_n \in U_3$, we have $\tau(x_n, y_n)<D_K$, so as $U_2$ is $D_K$-geodesic, there is a causal distance realizer $\gamma_n$ connecting $x_n$ and $y_n$. 
In particular, there is a causal curve joining them contained in $V$, which in the the terminology of \cite{ACS20} is denoted by $x_n \leq_{V} y_n$, from which it follows by the weak causal closure that $x \leq_{V} y$, i.e., $x$ and $y$ are also joined by a causal curve in $V$. 
Thus, we obtain $x \leq y$ and hence $V$ is causally closed. 
\end{proof}

The following three characterizations were all to some extent developed in \cite{BS22} and use the notion of being \emph{locally strictly timelike geodesically connected}, see \cite[Definition 1.12]{BS22}. 
It turns out that this is equivalent to the seemingly more natural property of being regular, which is why we will use this formulation instead (see Lemma \ref{lem: regular and loc str tl geod conn} below). 
The following concept of regularity for Lorentzian pre-length spaces was introduced in \cite[Definition 2.4]{BNR23} (and has to be compared to the notion
of regularity of Lorentzian length spaces, cf.\ \cite[Definition 3.22]{KS18}). The two concepts are equivalent under the asssumption of strong causality, see Lemma \ref{lem: regularly localizable vs regular and localizable} below. 
\begin{defi}[Regularity]
\label{def: regularity}
    A \LpLS $X$ is called \emph{regular} if every distance realizer between timelike related points is timelike, i.e., it cannot contain a null segment. 
\end{defi}
\begin{lem}[Regularly localizable vs.\ regular and localizable]
\label{lem: regularly localizable vs regular and localizable}
A strongly causal \LpLS is regularly localizable if and only if it is regular and localizable.
\end{lem}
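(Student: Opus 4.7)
The direction $(\Leftarrow)$ should come essentially for free. Assuming $X$ is regular and localizable, any localizing neighbourhood $U$ around a point automatically qualifies as \emph{regularly} localizing: a distance realizer in $U$ between timelike related points of $U$ is also a distance realizer in $X$ (the local time separation coincides with $\tau$ on $U$), hence timelike by the global regularity of $X$.

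For the direction $(\Rightarrow)$, the plan is to derive regularity from regular localizability together with strong causality. Given a distance realizer $\gamma \colon [a,b] \to X$ with $\gamma(a) \ll \gamma(b)$, I want to prove $\gamma(s) \ll \gamma(t)$ for all $a \leq s < t \leq b$. The key step will be an infimum argument on
\[
A := \{ t \in [a,b] : \gamma(a) \ll \gamma(t) \},
\]
which is nonempty ($b \in A$) and open in $[a,b]$ since chronology is open in a \LpLSn{} and $\gamma$ is continuous. The aim is to show $t^* := \inf A$ equals $a$.

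Suppose for contradiction $t^* > a$; then $t^* \notin A$ by openness of $A$. I would pick a regularly localizing neighbourhood $U$ of $\gamma(t^*)$ and choose $t_0 < t^* < t_1$ with $\gamma([t_0, t_1]) \subset U$ and $t_1 \in A$ (the latter is possible because $t^* = \inf A$). Since every subarc of a distance realizer is again a distance realizer, additivity of $L_\tau$ along $\gamma$ gives
\[
\tau(\gamma(a), \gamma(t_0)) + \tau(\gamma(t_0), \gamma(t^*)) + \tau(\gamma(t^*), \gamma(t_1)) = \tau(\gamma(a), \gamma(t_1)).
\]
The first two summands vanish because $t_0, t^* \notin A$, while the right hand side is strictly positive, forcing $\tau(\gamma(t_0), \gamma(t_1)) > 0$. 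But then $\gamma|_{[t_0,t_1]}$ is a distance realizer inside the regularly localizing neighbourhood $U$ between timelike related points $\gamma(t_0) \ll \gamma(t_1)$, hence timelike, contradicting $\tau(\gamma(t_0), \gamma(t^*)) = 0$. This forces $t^* = a$, i.e.\ $\gamma(a) \ll \gamma(t)$ for every $t > a$. The time-dual argument yields $\gamma(s) \ll \gamma(b)$ for every $s < b$, and applying the former conclusion to the subrealizer $\gamma|_{[s,b]}$ gives $\gamma(s) \ll \gamma(t)$ for arbitrary $s < t$, so $\gamma$ is timelike.

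The only step I expect to require genuine care is the simultaneous choice of $t_0$ and $t_1$: strong causality allows me to shrink $U$ so that a sufficiently small parameter window around $t^*$ stays inside $U$, while the infimum property of $t^*$ ensures a $t_1 \in A$ can be found above $t^*$ in that window. Beyond that, everything reduces to a routine application of the reverse triangle inequality for $\tau$ together with the defining property of a regularly localizing neighbourhood.
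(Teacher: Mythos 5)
Your $(\Leftarrow)$ direction --- the one you describe as coming essentially for free --- is where the genuine gap lies. For an arbitrary localizing neighbourhood $U$, the local time separation $\omega$ appearing in the definition of localizability in \cite{KS18} is the maximal $\tau$-length of causal curves \emph{staying in} $U$, and so satisfies only $\omega\le\tau|_{U\times U}$; it need not coincide with $\tau$, since the $\tau$-distance between points of $U$ may only be attained or approached by curves leaving $U$. Consequently an $\omega$-maximizer $\gamma_{p,q}$ need not be a distance realizer in $X$, and regularity of $X$ --- a statement about global realizers --- gives no information about it. Your parenthetical ``the local time separation coincides with $\tau$ on $U$'' is therefore exactly the point that needs proof, and it is where strong causality enters: the paper invokes \cite[Lemma 4.3]{GKS19}, which uses strong causality to \emph{choose} localizing neighbourhoods $\Omega$ with $\omega=\tau|_{\Omega\times\Omega}$; only for such a choice are the local maximizers global distance realizers, hence timelike by regularity (and then also strictly longer than any causal curve containing a null segment, as regular localizability demands). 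As written --- ``any localizing neighbourhood automatically qualifies'', with no use of strong causality --- this direction does not go through.

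Your $(\Rightarrow)$ direction is essentially correct, and in fact more detailed than the paper, which simply asserts that a regularly localizable space is regular (in effect the known fact from \cite{KS18} that maximal causal curves in regularly localizable spaces have a causal character). Two small points need care. First, the key inference ``a distance realizer inside a regularly localizing neighbourhood between timelike related points is timelike'' is not literally the definition, which speaks about curves maximizing $\omega$: you need the one-line bridge $L_\tau(\gamma|_{[t_0,t_1]})=\tau(\gamma(t_0),\gamma(t_1))\ge\omega(\gamma(t_0),\gamma(t_1))\ge L_\tau(\gamma|_{[t_0,t_1]})$, showing the segment is maximal in $U$, before the clause excluding null segments applies. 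Second, ``$t^*=a$, i.e.\ $\gamma(a)\ll\gamma(t)$ for every $t>a$'' tacitly uses that $A$ is upward closed (push-up via the reverse triangle inequality), and the inclusion $\gamma([t_0,t_1])\subseteq U$ needs only continuity of $\gamma$, not strong causality --- this direction requires no strong causality at all, whereas, contrary to your assessment, the other direction does.
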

\begin{proof}
A regularly localizable \LpLS is automatically regular in the sense of Definition \ref{def: regularity}. By \cite[Lemma 4.3]{GKS19}, we can choose localizable neighbourhoods $\Omega$ such that the local time separation function agrees with $\tau$. Thus, also a distance realizer w.r.t.\ the local time separation function is a global distance realizer, and has to stay timelike. 
\end{proof}

\begin{lem}[Regularity and local strictly timelike geodesic connectedness]
\label{lem: regular and loc str tl geod conn}
A $D_K$-geodesic \LpLS $X$ such that $t \mapsto L(\gamma|_{[0,t]})$ is continuous\footnote{This can for example be achieved by assuming that any point has a neighbourhood $U$ such that $\tau|_{U \times U}$ is continuous, which is the case in any space with curvature bounds, see \cite[Lemma 3.33]{KS18} (whose proof also works with $\tau$ being merely continuous in this sense).} for every distance realizer $\gamma:[0,b] \to X$ is regular if and only if it is locally strictly timelike geodesically connected.
\end{lem}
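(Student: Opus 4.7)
The plan is to prove the two implications separately: the forward direction $(\Rightarrow)$ is essentially formal, while the backward direction $(\Leftarrow)$ uses the continuity of the length function along a distance realizer to localize a hypothetical null segment and then derive a contradiction from local strict timelike geodesic connectedness.

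For $(\Rightarrow)$, I would let $x\in X$ be arbitrary and choose a neighbourhood $U$ of $x$ small enough that $\tau(p,q)<D_K$ for all $p,q\in U$; this is possible since $\tau(x,x)=0$ and, under the hypothesis alluded to in the footnote, $\tau$ is continuous near the diagonal. For any $p\ll q$ in $U$, the $D_K$-geodesic assumption produces a distance realizer from $p$ to $q$, and regularity of $X$ forces this realizer to be strictly timelike. Hence $U$ is a neighbourhood exhibiting local strict timelike geodesic connectedness of $X$ at $x$.

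For $(\Leftarrow)$, I argue by contradiction. Assume $X$ is locally strictly timelike geodesically connected but fails to be regular, so there exists a distance realizer $\gamma\colon[0,b]\to X$ with $\gamma(0)\ll\gamma(b)$ containing a null segment. Put $\Lambda(t):=L(\gamma|_{[0,t]})$; by hypothesis $\Lambda$ is continuous and nondecreasing, and $\Lambda(b)=\tau(\gamma(0),\gamma(b))>0$. The existence of a null segment means there is a maximal closed subinterval $[\alpha,\beta]\subseteq[0,b]$ on which $\Lambda$ is constant; since $\Lambda(0)=0<\Lambda(b)$, at least one complementary arc carries positive $\Lambda$-increment, and I may assume without loss of generality that $\Lambda(\alpha)>0$, so $\alpha>0$. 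By maximality of $[\alpha,\beta]$ together with monotonicity of $\Lambda$, for every sufficiently small $\varepsilon>0$ one has $\Lambda(\alpha)-\Lambda(\alpha-\varepsilon)>0$, whence $L(\gamma|_{[\alpha-\varepsilon,\alpha+\varepsilon']})>0$ and $\gamma(\alpha-\varepsilon)\ll\gamma(\alpha+\varepsilon')$ for every $\varepsilon'\in[0,\beta-\alpha]$.

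Now let $U$ be a neighbourhood of $x_{0}:=\gamma(\alpha)$ supplied by local strict timelike geodesic connectedness and, using continuity of $\gamma$, choose $\varepsilon,\varepsilon'>0$ so small that $\gamma([\alpha-\varepsilon,\alpha+\varepsilon'])\subseteq U$. The restriction $\gamma|_{[\alpha-\varepsilon,\alpha+\varepsilon']}$ is itself a distance realizer between the timelike related points $\gamma(\alpha-\varepsilon)\ll\gamma(\alpha+\varepsilon')$ contained in $U$, yet it contains the null sub-arc $\gamma|_{[\alpha,\alpha+\varepsilon']}$ and is therefore not strictly timelike, contradicting the defining property of $U$. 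The main obstacle is the correct localisation of the null defect, namely using maximality of $[\alpha,\beta]$ together with continuity of $\Lambda$ to produce a timelike pair straddling $\alpha$ and confirming that the full arc between them stays inside the prescribed neighbourhood $U$; this is where both the continuity of $L$ along distance realizers and the $D_K$-geodesic hypothesis (needed in the forward direction, and implicitly underwriting the very existence of $\gamma$) enter.
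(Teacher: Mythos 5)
Your backward implication is, in substance, the paper's own proof: you set $\Lambda(t)=L(\gamma|_{[0,t]})$, use $\tau(\gamma(s),\gamma(t))=\Lambda(t)-\Lambda(s)$ along the realizer, localize at the left endpoint $\alpha$ of a maximal interval of constancy (the paper's ``minimal $s$''), make the same w.l.o.g.\ reduction to $\Lambda(\alpha)>0$, and reach the same contradiction with strict timelike geodesic connectedness on a small neighbourhood of $\gamma(\alpha)$. This part is correct and needs no changes.

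The forward implication is where you deviate, and as written it has a gap. The lemma does \emph{not} assume that $\tau$ is continuous near the diagonal: the footnote only offers local continuity of $\tau$ as one sufficient condition for the actual hypothesis, which is continuity of $t\mapsto L(\gamma|_{[0,t]})$ along distance realizers. Since $\tau$ is in general only lower semicontinuous, your step ``choose $U$ with $\tau(p,q)<D_K$ for all $p,q\in U$'' is not justified by the stated assumptions. Moreover, this whole existence argument is unnecessary under the reading of \cite[Definition 1.12]{BS22} used in the paper: regularity (Definition \ref{def: regularity}) already says that \emph{every} distance realizer between timelike related points is timelike, which is exactly what (local) strict timelike geodesic connectedness demands of realizers, so the paper disposes of this direction in one sentence without invoking $D_K$-geodesicity or any bound $\tau<D_K$ on a neighbourhood. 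Relatedly, your closing remark that the $D_K$-geodesic hypothesis ``underwrites the very existence of $\gamma$'' in the backward direction is off the mark: there $\gamma$ exists simply because regularity is assumed to fail. So: keep your backward argument as is, and either reduce the forward direction to the definitional observation above or re-justify the neighbourhood construction without assuming continuity of $\tau$.
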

\begin{proof}
If $X$ is regular, any distance realizer between timelike related points is timelike by definition. 

If $X$ is locally strictly timelike geodesically connected, let $x\ll y$ and $\gamma:[0,b]\to X$ a causal distance realizer connecting $x$ to $y$. The function $f(t) \eqqcolon L(\gamma|_{[0,t]}) = \tau(x,\gamma(t))$ is continuous and monotonically increasing and, as $\gamma$ is a distance realizer, we have $\tau(\gamma(s),\gamma(t))=f(t)-f(s)$ for $s\leq t$. If $\gamma$ is not timelike, we have $s<t$ such that $\tau(\gamma(s),\gamma(t))=0$, so $f$ is constant on $[s,t]$, but $f(0)<f(b)$. W.l.o.g.\  assume $f(0)<f(s)$ and let $s$ be minimal such that $f$ is constant on $[s,t]$. Then we have $f(s-\varepsilon)<f(s)=f(s+\varepsilon)$. 
Thus, at $p=\gamma(s)$ we have a distance realizer $\gamma|_{[s-\varepsilon,s+\varepsilon]}$ with $\gamma(s-\varepsilon)\ll\gamma(s+\varepsilon)$, but $\gamma(s)\not\ll\gamma(s+\varepsilon)$, contradicting the assumption that $X$ is strictly timelike geodesically connected in a neighbourhood of $p$.
\end{proof}
Next we turn to the so-called monotonicity condition. This equivalent formulation was introduced in \cite[Definition 4.9]{BS22} and updated in \cite[Definition 2.15]{BNR23}. Intuitively, it says that signed comparison angles cannot increase (decrease) when approaching the vertex. Note that at first glance, this seems opposite to (iii) in the following definition, but this apparent contradiction is resolved by noting that increasing the inputs in $\theta$ causes one to move away from the vertex. 
\begin{defi}[Curvature bounds by monotonicity comparison]
\label{def-cb-mon}
Let $X$ be a regular \LpLSn. An open subset $U$ is called a $(\geq K)$- (resp.\ $(\leq K)$-)comparison neighbourhood in the sense of \emph{monotonicity comparison} if:
\begin{enumerate}
\item $\tau$ is continuous on $(U\times U) \cap \tau^{-1}([0,D_K))$, and this set is open.
\item $U$ is $D_K$-geodesic. 
\item 
\label{def-cb-mon.main} 
Let $\alpha:[0,a]\to U,\beta:[0,b]\to U$ be timelike distance realizers such that $x:=\alpha(0)=\beta(0)$ and such that $L(\alpha), L(\beta)$, $\tau(\alpha(a), \beta(b))$, $\tau(\beta(b), \alpha(a))$ $< D_K$. 
Let $\theta:D\to[0,\infty)$ be defined by $\theta(s,t):=$ $\tilde{\ma}_x^{K,\mathrm{S}}(\alpha(s),\beta(t))$ ($D\subseteq (0,a]\times(0,b]$ is the set where this is defined, i.e., the set of points where there is some causal relation between $\alpha(s)$ and $\beta(t)$ and the comparison triangle exists). Then $\theta$ is monotonically increasing (resp.\ decreasing).
\end{enumerate}
\end{defi}

The equivalence between monotonicity comparison and triangle comparison has already been established in \cite{BS22}:

\begin{pop}[Equivalence of triangle and monotonicity comparison]
\label{pop-eqiv-mon-tr}
Let $U\subseteq X$ be an open subset in a regular \LpLS $X$. 
Then $U$ is a $(\geq K)$- (resp.\ $(\leq K)$-)comparison neighbourhood in the sense of timelike triangle comparison if and only if it is a $(\geq K)$- (resp.\ $(\leq K)$-)comparison neighbourhood in the sense of monotonicity comparison. 
\end{pop}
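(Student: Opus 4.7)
The plan is to deduce both implications from law of cosines monotonicity in $\lm{K}$ (cf.\ \cite[Remark 2.5]{BS22}), which says that for triangles in $\lm{K}$ with two side lengths held fixed, the remaining side length and the signed angle at the opposite vertex determine each other monotonically.

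For the direction triangle comparison $\Rightarrow$ monotonicity, I would fix two timelike distance realizers $\alpha,\beta$ emanating from $x$ and a pair $(s_1,t_1) \leq (s_2,t_2)$ in the domain $D$ of $\theta$. The timelike triangle $\Delta(x,\alpha(s_2),\beta(t_2))$ lies in $U$, and the points $\alpha(s_1)$, $\beta(t_1)$ sit on the two sides of this triangle meeting at $x$. Triangle comparison applied to this configuration gives an inequality between $\tau(\alpha(s_1),\beta(t_1))$ in $X$ and the corresponding $\tau$-value between the two comparison points in the comparison triangle of $\Delta(x,\alpha(s_2),\beta(t_2))$; law of cosines monotonicity then translates this into an inequality between the signed $K$-comparison angles $\theta(s_1,t_1)$ and $\theta(s_2,t_2)$, in the direction required by the monotonicity condition.

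For the converse implication, I would take a timelike triangle $\Delta(x,y,z)\subseteq U$ with comparison triangle $\Delta(\bx,\by,\bz)$, and points $p,q$ on two of its sides. The case where $p,q$ lie on the same side is trivial. Otherwise the two sides share a common endpoint $v\in\{x,y,z\}$, and the sub-curves from $v$ to $p$ and from $v$ to $q$ are timelike distance realizers emanating from $v$, possibly with different time orientations. Monotonicity at $v$ compares the signed $K$-comparison angle of the sub-triangle $\Delta(v,p,q)$ with the signed $K$-comparison angle at $v$ of the full triangle $\Delta(x,y,z)$, and reversing the law of cosines argument from the previous paragraph converts this angle comparison back into the desired inequality between $\tau(p,q)$ and $\tau(\bp,\bq)$.

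The principal obstacle is a careful bookkeeping of signs and time orientations: the sign $\sigma$ of a signed $K$-comparison angle depends on whether the reference vertex lies between two timelike sides or at the past/future extremum of the triangle, and the translation between $\tau$-inequalities and angle inequalities has to respect this in each of the possible causal configurations. One also has to verify along the way that all intermediate comparison configurations satisfy size bounds for $K$, which is ensured by the hypotheses placed on $U$ together with the fact that the sub-triangles involved have $\tau$-values no greater than those of the original triangle.
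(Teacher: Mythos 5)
Your overall strategy is sound, and it is essentially the route behind the result the paper invokes (the paper itself offers no argument beyond citing \cite[Theorem 4.13]{BS22}): both directions do reduce, via law of cosines monotonicity, to comparing the signed angle at the shared vertex of a sub-configuration with the angle at the corresponding vertex of the comparison triangle, and your converse direction in particular is complete in the generic case once the sign conventions are checked. However, the gap is not just ``bookkeeping of signs'': the translation between $\tau$-inequalities and angle inequalities genuinely breaks down in the degenerate causal configurations, which the domain $D$ of $\theta$ in Definition \ref{def-cb-mon} explicitly includes. Concretely, in the direction ``triangle comparison $\Rightarrow$ monotonicity'' with lower bounds, suppose $\alpha$ and $\beta$ have the same time orientation and $\alpha(s_1)\le\beta(t_1)$ are merely null related. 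Then $\tau(\alpha(s_1),\beta(t_1))=0$, so the instance of triangle comparison you invoke is the vacuous inequality $0\le\tau(\bp,\bq)$, and law of cosines monotonicity cannot convert it into $\theta(s_1,t_1)\le\theta(s_2,t_2)$: with the two adjacent side lengths fixed, the $\tau$-length of the opposite side is constant equal to $0$ on the whole range of angles at and beyond the null configuration, so the angle is not determined by it. What you actually need is that the comparison points satisfy $\bp\le\bq$, i.e.\ exactly the implication \eqref{eq:strict_causal_comp_implication}, and this requires a separate argument (approximate $\alpha(s_1)$ from below along $\alpha$, use push-up and continuity of $\tau$ to get $\bp_s\ll\bq$ and pass to the limit, as in the proof of Theorem \ref{thm:equivCausTl}(ii)), or equivalently a continuity-in-parameters argument for $\theta$.

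Two further cases of the same nature are missing. First, if $\alpha(s_2)$ and $\beta(t_2)$ are only null related, then $\Delta(x,\alpha(s_2),\beta(t_2))$ is an admissible causal triangle rather than a timelike one, so timelike triangle comparison does not apply to it as you state; you need either the causal extension (Theorem \ref{thm:equivCausTl}(i)) or an approximation in the parameters, again using push-up to stay inside $D$. Second, in the direction ``monotonicity $\Rightarrow$ triangle comparison'' for \emph{upper} bounds, the inequality $\tau(p,q)\ge\tau(\bp,\bq)$ is not trivial when $p$ and $q$ are not causally related (one must show $\bp,\bq$ are not timelike related), yet in that case there is no admissible causal triangle $\Delta(v,p,q)$ and hence no comparison angle to feed into monotonicity; this again needs a perturbation of $q$ along its side combined with the reverse triangle inequality and continuity of $\tau$. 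With these degenerate configurations handled (they are precisely the kind of issue Theorem \ref{thm:equivCausTl} and Remark \ref{rem:timelikeTriangles} are designed to address), your argument goes through.
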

\begin{proof}
See \cite[Theorem 4.13]{BS22}.
\end{proof}

\begin{rem}[One-sided monotonicity comparison]
\label{rem-cb-one-sided-mon}
    As it turns out, similar to the case of triangle comparison, there is also a one-sided version of monotonicity comparison. 
    This means that we leave the parameter of one of the curves fixed. 
    Clearly, ordinary monotonicity comparison implies one-sided monotonicity comparison. 
    Conversely, one simply varies the parameters one after the other (taking care to make the points stay timelike related), to get from one-sided monotonicity comparison to the original formulation. 
    As will be seen below, the one-sided version is more convenient to work with. 
\end{rem}

The next formulation of curvature bounds is closely related to monotonicity comparison. Instead of talking about monotonic behaviour of comparison angles when going along the sides of a hinge, we now require an inequality between hyperbolic angles and comparison angles. A formulation of curvature bounds using angles was first introduced in \cite{BMS22}. However, this uses a slightly different definition for angles. The following definition is better suited for our setting. Note that as in \cite{BMS22}, for the case of upper curvature bounds one needs to explicitly assume one case of the triangle inequality for angles.

\begin{defi}[Curvature bounds by angle comparison]
\label{def-cb-ang}
Let $X$ be a regular \LpLSn. An open subset $U$ is called a $(\geq K)$- (resp.\ $(\leq K)$-)comparison neighbourhood in the sense of \emph{angle comparison} if:
\begin{enumerate}
\item $\tau$ is continuous on $(U\times U) \cap \tau^{-1}([0,D_K))$, and this set is open.
\item $U$ is $D_K$-geodesic.
\item \label{def-cb-ang.main} Let $\alpha:[0,a]\to U,\ \beta:[0,b]\to U$ be distance realizers such that $L(\alpha), L(\beta), \tau(\alpha(a), \beta(b)), \tau(\beta(b), \alpha(a)) < D_K$ and such that $x \coloneqq \alpha(0)=\beta(0)$ and $\alpha(a)$, $\beta(b)$ are causally related. 
Then 
\begin{equation}
\ma_x^{\mathrm{S}}(\alpha,\beta)\leq\tilde{\ma}_x^{K,\mathrm{S}}(\alpha(a),\beta(b)) \quad \text{ (resp.\ } \ma_x^{\mathrm{S}}(\alpha,\beta)\geq\tilde{\ma}_x^{K,\mathrm{S}}(\alpha(a),\beta(b)) \text{)} \, .
\end{equation}
\item For $(\geq K)$-comparison neighbourhoods only: let $\alpha,\beta,\gamma:[0,\varepsilon)\to U$ be distance realizers, all emanating from the same point $x:=\alpha(0)=\beta(0)=\gamma(0)$. 
Suppose $\alpha$ and $\gamma$ have the same time-orientation and $\beta$ has the opposite time-orientation. Then we have the following special case of the triangle inequality of angles:
\begin{equation}
\label{eq: triangle inequality of angles lower curvature bounds}
\ma_x(\alpha,\gamma)\leq\ma_x(\alpha,\beta)+\ma_x(\beta,\gamma) \, .
\end{equation}
\end{enumerate}
\end{defi}
Note that for a \LpLS $X$ with curvature bounded below, in point $(iv)$ one can also take the curves as maps into $X$, since angles only depend on the initial segments of the curves anyways. 
\begin{rem}[Only considering timelike triangles] 
\label{rem:timelikeTriangles}
While Definition \ref{def-cb-ang} requires all admissible causal triangles to satisfy the angle condition at each vertex where an angle is defined, it is often more convenient to work only with timelike triangles. However, it becomes clear, when using our new vocabulary, that only requiring the angle condition to hold at each vertex of every timelike triangle is an equivalent constraint.

Consider, for example, an admissible causal triangle $\Delta(x,y,z)$ where $x \ll y \leq z$, with $\tau(y,z)=0$ with a failing angle condition. (The case $x \leq y \ll z$ is similar.) We show that moving $y$ slightly can create a timelike triangle with a failing angle condition.
Let $\alpha: [0,a] \to X$ be a distance realizer from $x$ to $y$ and $\beta: [0,b] \to X$ a distance realizer from $x$ to $z$. By regularity, the side $[y,z]$ contains no timelike segments, so the only angle which is defined in $\Delta(x,y,z)$ is at $x$. It follows that, if an angle condition fails, it necessarily does so at $x$. 
As $s \nearrow a$ the triangle $\Delta(x, \alpha(s), z)$ is timelike and converges to the original $\Delta(x,y,z)$. 
The signed angle $\ma_x^{\mathrm{S}}(\alpha,\beta)$ is  not dependent on the endpoint of $\alpha$, while the signed comparison angles $\tilde{\ma}_x^{K,\mathrm{S}}(\alpha(s),\beta(b))$ vary continuously with $s$.
The failure of the angle condition at $x$ is an open condition and so, for $s$ sufficiently close to $a$, the timelike triangle $\Delta(x, \alpha(s), z)$ also has a failing angle condition at $x$. 

Hence, the existence of an admissible causal triangle with failing angle condition implies the existence of a timelike triangle (of comparable size) with failing angle condition. The contrapositive then tells us that a space has a curvature bound with respect to angle comparison in admissible causal triangles if it does so with respect to angle comparison in timelike triangles. The converse implication is tautological and the two notions are therefore equivalent. In particular, we refrain from introducing ``causal angle comparison'', since it would anyways be automatically equivalent to Definition \ref{def-cb-ang}. 
\end{rem}
We now show that monotonicity comparison implies angle comparison. As \cite{BMS22} uses a different convention and the proof is elementary, we give it anew. We intend to form an implication circle, so the converse implication is proven later. In that proof, a technical detail requires us to assume the triangle inequality of angles as displayed in 
\eqref{eq: triangle inequality of angles lower curvature bounds}, which was achieved in \cite[Theorem 4.5(ii)]{BS22} using \emph{geodesic prolongation}, cf.\ \cite[Definition 4.2]{BS22}. 

As this is a rather strong property, however, we believe that it is in fact more natural to directly impose this condition, whenever necessary. We will do this by saying that $X$ satisfies \eqref{eq: triangle inequality of angles lower curvature bounds}.

\begin{pop}[Monotonicity comparison implies angle comparison]
\label{pop-implication-mon-->angle} 
Let $U\subseteq X$ be an  open subset in a regular \LpLSn. 
In the case of lower curvature bounds, additionally assume that $X$ satisfies \eqref{eq: triangle inequality of angles lower curvature bounds}. 
Then if $U$ is a $(\geq K)$- (resp.\ $(\leq K)$-)comparison neighbourhood in the sense of monotonicity comparison, it is also a $(\geq K)$- (resp.\ $(\leq K)$-)comparison neighbourhood in the sense of angle comparison. 
\end{pop}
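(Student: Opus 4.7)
My plan is to verify each clause of Definition \ref{def-cb-ang} in turn. Clauses (i) and (ii) are identical to the corresponding clauses of Definition \ref{def-cb-mon} and thus transfer without any work, while clause (iv) is only required in the lower curvature case and is explicitly assumed as a hypothesis of the proposition; hence all the content will lie in verifying (iii).

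To establish (iii), I will fix timelike distance realizers $\alpha : [0,a] \to U$ and $\beta : [0,b] \to U$ emanating from a common point $x$, satisfying the size bounds and with $\alpha(a), \beta(b)$ causally related, and consider the function $\theta(s,t) := \tilde{\ma}_x^{K,\mathrm{S}}(\alpha(s), \beta(t))$ on its natural domain $D \subseteq (0,a] \times (0,b]$. By the monotonicity hypothesis, $\theta$ is monotonically increasing (resp.\ decreasing) on $D$, so in particular
\[
\theta(s,t) \leq \theta(a,b) \text{ (resp.\ } \theta(s,t) \geq \theta(a,b) \text{)}
\]
for all $(s,t) \in D$ with $s \leq a$ and $t \leq b$. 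Note that $\theta(a,b) = \tilde{\ma}_x^{K,\mathrm{S}}(\alpha(a),\beta(b))$ is exactly the quantity appearing on the right-hand side of the angle comparison inequality.

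The crucial step will be to pass to the joint limit as $(s,t) \to (0^+, 0^+)$. Since the sign of $\theta$ is determined by the time orientations of $\alpha$ and $\beta$ (and is hence constant on $D$), the monotonicity of $\theta$ together with the one-sided bound by $\theta(a,b)$ forces this joint limit to exist as an extended real. By \cite[Proposition 2.14]{BS22} (cf.\ the remark in the excerpt following the definition of angles), the limit is independent of the curvature $K$ used in the defining comparison angles, so it coincides with the signed angle $\ma_x^{\mathrm{S}}(\alpha,\beta)$ as defined via $K=0$; the inequality in (iii) then follows immediately.

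The main obstacle I anticipate is precisely this final step: one must ensure that the joint limit over the possibly irregular domain $D$ both exists and matches the angle as defined in the paper. Monotonicity yields existence, and the model-space-invariance from \cite[Proposition 2.14]{BS22} identifies the resulting value with $\ma_x^{\mathrm{S}}(\alpha, \beta)$. A minor subsidiary check is that the restriction to admissible causal triangles implicit in the definition of the angle is compatible with the domain $D$, but this is automatic since admissible causal triangles arise as limits of timelike ones.
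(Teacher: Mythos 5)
Your proposal is correct and follows essentially the same route as the paper: clauses (i), (ii), and (iv) transfer directly, and (iii) is obtained by writing the signed angle as the limit of $\theta(s,t)$ as $(s,t)\to(0,0)$ and using monotonicity to carry the bound $\theta(a,b)$ through the limit. The extra points you make explicit (existence of the limit via monotonicity, and the $K$-independence of the defining comparison angles via \cite[Proposition 2.14]{BS22}) are exactly what the paper relies on implicitly through its earlier remark on angles.
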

\begin{proof}
(i) and (ii) are the same in both definitions, and (iv) in the case of lower curvature bounds is assumed directly.  
So the only point to check is (iii) in Definition \ref{def-cb-ang}. 
To this end, given distance realizers $\alpha$ and $\beta$ in $U$ as in Definition \ref{def-cb-ang}(iii), we have by definition
\begin{equation}
    \ma_x^{\mathrm{S}}(\alpha,\beta) = \lim_{s,t \to 0} \tilde{\ma}_x^{K,\mathrm{S}}(\alpha(s),\beta(t)) = \lim_{s,t \to 0}\theta(s,t) \, .
\end{equation}
As $\theta$ is monotonous by assumption, the desired inequality holds in the limit also. 
\end{proof}

Finally, we turn to  
hinge comparison. This uses the construction of hinges and comparison hinges, and the distinguishing
inequality pertains to the opposite side of the angle 
that forms the hinge. As will be seen below, this is closely related to angle comparison via the law of cosines. To establish a proper equivalence between hinge comparison and angle comparison, however, we need to additionally assume the same case of the triangle inequality of angles as in Definition \ref{def-cb-ang}. 
\begin{defi}[Curvature bounds by hinge comparison]
\label{def-cb-hinge}
Let $X$ be a regular \LpLSn. An open subset $U$ is called a $(\geq K)$- (resp.\ $(\leq K)$-)comparison neighbourhood in the sense of \emph{hinge comparison} if:
\begin{enumerate}
\item $\tau$ is continuous on $(U\times U) \cap \tau^{-1}([0,D_K))$, and this set is open.
\item $U$ is $D_K$-geodesic.
\item Let $\alpha:[0,a]\to U,\ \beta:[0,b]\to U$ be distance realizers emanating from the same point $x=\alpha(0)=\beta(0)$ such that $L(\alpha), L(\beta), \tau(\alpha(a), \beta(b))$, $\tau(\beta(b), \alpha(a)) < D_K$ and such that the angle $\ma_x(\alpha,\beta)$ is finite. Let $(\tilde{\alpha}, \tilde{\beta})$ form a comparison hinge for $(\alpha, \beta)$ in $\lm{K}$. Then 
\begin{equation}
\tau(\alpha(a),\beta(b)) \geq \tau(\tilde{\alpha}(a),\tilde{\beta}(b)) \ \, \text{ (resp.\ } \tau(\alpha(a),\beta(b)) \leq \tau(\tilde{\alpha}(a),\tilde{\beta}(b)) \text{)} \, .
\end{equation}
\item Let $\alpha,\beta$ be as in (iii), without the restriction of finite angle. For $(\geq K)$-comparison neighbourhoods we assume that if $\alpha,\beta$ point in different time directions the angle can never be infinite, and for $(\leq K)$-comparison neighbourhoods we assume that if $\alpha,\beta$ point in the same time directions the angle can never be infinite. \footnote{This can be viewed as the limit of (iii) as $\ma_x(\alpha,\beta) \to \infty$ and agrees with \cite[Lemma 4.10]{BS22}. The rationale behind (iv) is to avoid the case of curvature bounds being trivially satisfied when angles are infinite.}
\item for $(\geq K)$-comparison neighbourhoods only: let $\alpha,\beta,\gamma:[0,\varepsilon)\to U$ be distance realizers all emanating from the same point $x:=\alpha(0)=\beta(0)=\gamma(0)$. 
Suppose $\alpha$ and $\gamma$ have the same time-orientation and $\beta$ has the opposite time-orientation. Then we have the following special case of the triangle inequality of angles:
\begin{equation}
\ma_x(\alpha,\gamma)\leq\ma_x(\alpha,\beta)+\ma_x(\beta,\gamma) \, .
\end{equation}
\end{enumerate}
\end{defi}

\begin{pop}[Equivalence of angle and hinge comparison]
\label{prop:equivalence_hinge_angle}
Let $U$ be an open subset in a regular \LpLS $X$. 
Then $U$ is a $(\geq K)$- (resp.\ $(\leq K)$-)comparison neighbourhood in the sense of angle comparison if and only if it is a $(\geq K)$- (resp.\ $(\leq K)$-)comparison neighbourhood in the sense of hinge comparison. 
\end{pop}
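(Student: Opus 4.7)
The plan is to translate between Definitions \ref{def-cb-ang} and \ref{def-cb-hinge} via the monotonicity of the Lorentzian law of cosines in the model space $\lm{K}$, recorded in \cite[Remark 2.5]{BS22}: if the two side lengths at a vertex in $\lm{K}$ are fixed, then the length of the opposite side is a monotone function of the signed angle at that vertex, with larger signed angle corresponding to longer opposite side. A direct law of cosines computation verifies this in both the same-time-orientation case (two timelike segments emanating from $x$ in the same time direction, sign $\sigma = -1$) and the opposite-time-orientation ``middle vertex'' case ($\sigma = +1$). Conditions (i) and (ii) are identical in the two definitions, and the special case of the triangle inequality of angles for lower bounds appears as condition (iv) of angle comparison and condition (v) of hinge comparison, so it transfers verbatim. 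It thus remains to reconcile (iii) of angle comparison with (iii) and (iv) of hinge comparison.

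For angle $\Rightarrow$ hinge, take a hinge $(\alpha,\beta)$ at $x$ with finite angle, form the comparison hinge $(\tilde\alpha,\tilde\beta)$ in $\lm{K}$, and pick a comparison triangle $\Delta(\bx,\by,\bz)$ for $(x,\alpha(a),\beta(b))$. Both model configurations share the two side lengths $L(\alpha), L(\beta)$ at the vertex; the signed angle at $\tx$ equals $\ma_x^S(\alpha,\beta)$ by definition of the comparison hinge, and the signed angle at $\bx$ equals $\tilde{\ma}_x^{K,S}(\alpha(a),\beta(b))$ by definition of the comparison angle. The inequality (iii) of angle comparison is exactly the inequality between these two signed angles, and the law of cosines monotonicity transfers it to the required inequality between $\tau(\tilde\alpha(a),\tilde\beta(b))$ and $\tau(\by,\bz) = \tau(\alpha(a),\beta(b))$, establishing hinge comparison (iii). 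For hinge comparison (iv), note that the comparison angle of any honest model triangle with all sides $<D_K$ is finite: thus in the upper-bound case with $\alpha,\beta$ pointing in the same time direction the common sign $\sigma = -1$ turns angle comparison (iii) into $\ma_x(\alpha,\beta) \leq \tilde{\ma}_x^K(\alpha(a),\beta(b)) < \infty$, and the lower-bound case with opposite time directions is analogous with $\sigma = +1$.

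The converse hinge $\Rightarrow$ angle follows the same pattern. Given distance realizers $\alpha,\beta$ at $x$ with causally related endpoints as in angle comparison (iii), if $\ma_x(\alpha,\beta)$ is finite then the comparison hinge exists and law of cosines monotonicity applied to hinge comparison (iii) yields the desired signed-angle inequality. If $\ma_x(\alpha,\beta) = \infty$, hinge comparison (iv) rules out precisely the sign configuration in which the required angle inequality could fail; in the remaining configuration the signed angle takes the value $\pm\infty$ on the correct side of the inequality and (iii) of angle comparison holds trivially. Condition (iv) of angle comparison coincides with condition (v) of hinge comparison, completing the proof. The main obstacle is the sign bookkeeping and the case distinction for infinite angles needed for hinge comparison (iv); the underlying geometric input is entirely standard.
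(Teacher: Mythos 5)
Your overall strategy is the same as the paper's: conditions (i), (ii) and Definition \ref{def-cb-ang}(iv) $=$ Definition \ref{def-cb-hinge}(v) transfer verbatim, the core of (iii) is the monotone dependence of the opposite side on the signed angle in $\lm{K}$ (the paper phrases this via the Hinge Lemma of Alexander--Bishop rather than law of cosines monotonicity, precisely because the side of the comparison hinge opposite $\tx$ may be spacelike; your direct computation still covers that regime, so this is only a difference in reference), and your sign bookkeeping for the finiteness statement \ref{def-cb-hinge}(iv) and for infinite angles in the hinge-to-angle direction matches the paper.

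There is, however, a genuine gap in the direction from angle comparison to hinge comparison for lower curvature bounds. Definition \ref{def-cb-hinge}(iii) does \emph{not} require the endpoints $\alpha(a)$, $\beta(b)$ to be causally related, whereas your argument starts by ``picking a comparison triangle for $(x,\alpha(a),\beta(b))$'' and comparing signed angles with $\tilde{\ma}_x^{K,\mathrm{S}}(\alpha(a),\beta(b))$ --- objects that only exist when $\alpha(a)$ and $\beta(b)$ are causally related. When $\alpha$ and $\beta$ have the same time orientation and their endpoints are unrelated, the hinge inequality to be proved reads $0=\tau(\alpha(a),\beta(b))\geq\tau(\tilde{\alpha}(a),\tilde{\beta}(b))$, i.e.\ one must show that the comparison hinge has no timelike relation between its endpoints; this does not follow from any instance of angle comparison applied to the full curves. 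The paper devotes a separate argument to exactly this: sub-comparison hinges sit inside the comparison hinge, $\alpha(\delta)\ll\beta(b)$ for small $\delta$, and by continuity of $\tau$ and the intermediate value property one finds parameters $s'$ with $\tau(\alpha(s'),\beta(b))=\varepsilon$, whence $\tau(\tilde{\alpha}(s'),\tilde{\beta}(b))\leq\varepsilon$ and, by monotonicity of $s\mapsto\tau(\tilde{\alpha}(s),\tilde{\beta}(b))$, $\tau(\tilde{\alpha}(a),\tilde{\beta}(b))=0$. (For upper curvature bounds the non-causal case is indeed trivial, since there the hinge inequality points the other way.) Without this step your proof only establishes hinge comparison for hinges whose endpoints are causally related, which is strictly weaker than Definition \ref{def-cb-hinge}.
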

\begin{proof}
    Definition \ref{def-cb-ang}(iv) and Definition \ref{def-cb-hinge}(v) as well as (i) and (ii) in both formulations are the same. 
    Thus, only the case of (iii) in both conditions as well as \ref{def-cb-hinge}(iv) are of interest. 
    Concerning \ref{def-cb-hinge}(iv), note that angle comparison for, say, lower curvature bounds, yields 
    \begin{equation}
        \ma_x^{\mathrm{S}}(\alpha,\beta)\leq\tilde{\ma}_x^{K,\mathrm{S}}(\alpha(a),\beta(b)) 
    \end{equation}
    for any two distance realizers as in \ref{def-cb-ang}(iii). 
    Clearly, any comparison angle is finite by definition (it is a hyperbolic angle in the model spaces between timelike distance realizers). If $\sigma=1$, i.e., if the two curves have different time orientation, then this becomes an inequality for non-signed (comparison) angles, and hence $\ma_x(\alpha,\beta) < \infty$ follows. For upper curvature bounds the inequality on signed angles is reversed, which is why we get the implication for finite $\ma_x(\alpha,\beta)$ for curves with the same time orientation (causing the inequality to reverse once again). 
    
    For (iii), we start out by noting that hinges and triangles are closely related concepts. Indeed, given any hinge $(\alpha, \beta)$ emanating from $x$ such that the endpoints $\alpha(a)$ and $\beta(b)$ of the curves are causally related, we can form a triangle $\Delta(x,\alpha(a), \beta(b))$ (the order of the points might change depending on the time orientation of the curves, and the side opposite of $x$ might be null, but this is not important for our arguments). 
    Conversely, any timelike triangle $\Delta(x,y,z)$ gives a hinge at $x$ (in fact, at any of the three points), by using the two sides adjacent to $x$.

    Say $(\alpha, \beta)$ is a hinge at $x$ with finite angle, both curves are future-directed, and $\alpha(a) \leq \beta(b)$. Consider the comparison triangle $\Delta(\bx, \bar{\alpha}(a), \bar{\beta}(b))$ (cf.\ the Realizabilty Lemma \cite[Lemma 2.1]{AB08})  and the comparison hinge $(\tilde{\alpha}(a), \tx, \tilde{\beta}(b))$. By construction, we have $\ma_x(\alpha, \beta)=\ma_{\tx}(\tilde{\alpha}(a), \tilde{\beta}(b))$ and $\tilde{\ma}_x(\alpha(a), \beta(b))=\ma_{\bx}(\bar{\alpha}(a), \bar{\beta}(b))$. 
    Moreover, the comparison hinge $(\tilde{\alpha}(a), \tx, \tilde{\beta}(b))$ can be viewed as a geodesic triangle, although the side connecting $\tilde{\alpha}(a)$ and $\tilde{\beta}(b))$ might not be causal. In any case, the sides adjacent to $\bx$ and $\tx$ have the same lengths, so we have $\ma_x(\alpha, \beta) \leq \tilde{\ma}_x(\alpha(a), \beta(b))$ if and only if $\tau(\alpha(a), \beta(b)) \geq \tau(\tilde{\alpha}(a), \tilde{\beta}(b))$ due\footnote{We prefer to use this result instead of our version of the law of cosines, as the triangle $\Delta(\tilde{x}, \tilde{\alpha}(a), \tilde{\beta}(b))$ has a possibly spacelike side.} to the Hinge Lemma \cite[Lemma 2.2]{AB08}. The case of $\alpha$ and $\beta$ having different time orientation (or both being past-directed) is completely analogous. 

    Finally, we need to touch on a small technicality about causal relations. While angle comparison talks about curves where the endpoints are causally related, this is not the case for hinge comparison, meaning that one needs to conclude from angle comparison the fact that hinge comparison is also valid in configurations where the endpoints are not causally related. Clearly, this is only possible if the curves have the same time-orientation, say both are future directed. Moreover, for upper curvature bounds, the inequality in hinge comparison reads $\tau(\alpha(a),\beta(b)) \leq \tau(\tilde{\alpha}(a),\tilde{\beta}(b))$, which is trivially satisfied if $\alpha(a)$ and $\beta(b)$ are not causally related. 
    So assume we are in the case of lower curvature bounds and
    let $(\alpha, \beta)$ be a hinge with both realizers future-directed and assume that there is no causal relation between $\alpha(a)$ and $\beta(b)$. We essentially need to show that there is no timelike relation between $\bar{\alpha}(a)$ and $\bar{\beta}(b)$. 
    Note that contrary to comparison triangles, comparison hinges have the useful property that sub-comparison hinges ``live inside'' the original one. 
    In other words, if $(\tilde{\alpha},\tilde{\beta})$ is a comparison hinge for $(\alpha, \beta)$, then $(\tilde{\alpha}|_{[0,s]},\tilde{\beta})$ is a comparison hinge for $(\alpha|_{[0,s]},\beta)$. 
    Clearly, $\alpha(\delta) \ll \beta(b)$ for small enough $\delta >0$. 
    Thus, together with $\tau(\alpha(a), \beta(b))=0$ and the mean value theorem ($\tau$ is continuous in a comparison neighbourhood), we infer that for each $\varepsilon>0$ small enough there is a parameter $s'$ such that $\tau(\alpha(s'), \beta(b))=\varepsilon$. Thus, we infer $\varepsilon=\tau(\alpha(s'),\beta(b)) \geq \tau(\tilde{\alpha}(s'),\tilde{\beta}(b))$. Since $\tau(\tilde{\alpha}(s), \tilde{\beta}(b))$ is clearly monotonically decreasing in $s$ as well and $\varepsilon>0$ was arbitrary, we arrive at $\tau(\tilde{\alpha}(a),\tilde{\beta}(b))=0$, as claimed. 
\end{proof}

\section{New characterizations of curvature bounds}
In this chapter we introduce several characterizations of curvature bounds which are new in the Lorentzian context.
\subsection{Timelike and causal curvature bounds}
Before we go on to introduce new formulations of curvature bounds, however, we want to briefly touch on the interplay between causal and timelike curvature bounds. Causal curvature bounds were also introduced in \cite[Definition 4.14]{KS18}. In (ii) of that definition it was required that comparison neighbourhoods be causally geodesic. However, since the defining inequalities on $\tau$ are only required between (comparison) points on timelike sides of an admissible causal triangle, the existence of null realizers is not necessary. 
For this reason, in the definition of causal curvature bounds we give below, we require comparison neighbourhoods to be merely $D_K$-geodesic (instead of causally $D_K$-geodesic), just as in the other formulations of curvature bounds. 
Note that with this modification all results about causal curvature bounds that have been obtained in the literature so far retain their validity. 
Most importantly, with this reformulation we are able to show that causal and timelike curvature bounds are in fact equivalent. 

\begin{defi}[(Strict) causal curvature bounds by triangle comparison]
Let $X$ be a \LpLSn. An open subset $U$ is called a $(\geq K)$- (resp.\ $(\leq K)$-)comparison neighbourhood in the sense of \emph{causal triangle comparison} if:
\begin{enumerate}
\item $\tau$ is continuous on $(U\times U)\cap\tau^{-1}([0,D_K))$, and this set is open.
\item $U$ is $D_K$-geodesic.
\item Let $\Delta(x,y,z)$ be an admissible causal triangle in $U$, with $p,q$ two points on the timelike sides of $\Delta(x,y,z)$. 
Let $\Delta(\bar{x},\bar{y},\bar{z})$ be a comparison triangle in $\lm{K}$ for  $\Delta(x,y,z)$ and $\bar{p},\bar{q}$ comparison points for $p$ and $q$, respectively. Then
\begin{equation}
\tau(p,q)\leq\tau(\bar{p},\bar{q})\quad \text{ (resp.\ } \tau(p,q) \geq \tau(\bar{p}, \bar{q}) \text{)} \, .
\end{equation}
\end{enumerate}
If in (iii) we additionally have 
\begin{equation}\label{eq:strict_causal_comp_implication}
p\leq q \Rightarrow \bar{p}\leq \bar{q}\quad \text{ (resp.\ } p\leq q \Leftarrow \bar{p}\leq \bar{q} \text{)} \, ,
\end{equation}
then $U$ is called a $(\geq K)$- (resp.\ $(\leq K)$-)comparison neighbourhood in the sense of \emph{strict causal triangle comparison}.
\end{defi}
\begin{thm}[Timelike and (strict) causal curvature bounds]
\label{thm:equivCausTl}
Let $X$ be a \LpLSn. 
\begin{itemize}
\item[(i)] Let $U\subseteq X$ be open. Then $U$ is a $(\geq K)$- (resp.\ $(\leq K)$-)comparison neighbourhood in the sense of timelike triangle comparison if and only if it is one in the sense of causal triangle comparison.
\item[(ii)] Let $U\subseteq X$ be open. 
Then $U$ is a $(\geq K)$-comparison neighbourhood in the sense of causal triangle comparison if and only if it is one in the sense of strict causal triangle comparison. If $U$ is locally causally closed, the analogous statement about $U$ being a $(\leq K)$-comparison neighbourhood holds as well. 
\item[(iii)] $X$ has curvature bounded below (resp.\ above) by $K$ in the sense of timelike triangle comparison if and only if it has the same bound in the sense of causal triangle comparison.
\item[(iv)] $X$ has curvature bounded below by $K$ in the sense of causal triangle comparison if and only if it has the same bound in the sense of strict causal triangle comparison. If $X$ is strongly causal and locally causally closed, the analogous statement about $X$ having curvature bounded above by $K$ holds as well.
\end{itemize} 
\end{thm}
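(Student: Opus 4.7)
The plan is to prove (i) and (ii) locally by approximation arguments and read off (iii) and (iv) as immediate globalizations, using Lemma~\ref{lem:causally_closed_comp_nbhds} to bridge the gap in the upper-bound case of (iv). The implications ``causal $\Rightarrow$ timelike'' in (i) and ``strict $\Rightarrow$ non-strict'' in (ii) are tautological; only the reverse implications need work.

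For (i), consider an admissible causal triangle $\Delta(x,y,z)$ in $U$ with, say, $x\leq y\ll z$ and $\tau(x,y)=0$, and points $p,q$ on its timelike sides. I would approximate the null vertex by picking $y_n$ on the distance realizer $[y,z]$ (which exists as $U$ is $D_K$-geodesic) at $\tau$-distance $1/n$ from $y$. The reverse triangle inequality gives $x\ll y_n$, so each $\Delta(x,y_n,z)$ is a timelike triangle. By the openness of $(U\times U)\cap\tau^{-1}([0,D_K))$ and the continuity of $\tau$ on it, the side lengths of $\Delta(x,y_n,z)$ converge to those of $\Delta(x,y,z)$, hence the comparison triangles in $\lm{K}$ converge (up to an isometry) to the limiting comparison triangle. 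Comparison points $\bp_n,\bq_n$ then converge to $\bp,\bq$, where in the edge cases $p=y$ or $q=y$ one takes the approximating sequence $y_n$ itself. Applying timelike triangle comparison to $\Delta(x,y_n,z)$ and passing to the limit yields the desired inequality (reversed for $(\leq K)$).

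For (ii), assume $U$ is a $(\geq K)$-comparison neighbourhood in the causal sense and take comparison points $p\leq q$. If $p\ll q$ then $0<\tau(p,q)\leq\tau(\bp,\bq)$ gives $\bp\ll\bq$. If $p=q$, the fact that distinct timelike sides of the comparison triangle meet only at common vertices forces $\bp=\bq$. If $p\neq q$ with $\tau(p,q)=0$, I would push $p$ slightly into its own past along its timelike side to get $p_n\ll p\leq q$, whence $p_n\ll q$; the first case yields $\bp_n\ll\bq$, and closedness of $\leq$ in $\lm{K}$ produces $\bp\leq\bq$. For $(\leq K)$ the argument is dual: starting from $\bp\leq\bq$ with $\tau(\bp,\bq)=0$ push $\bp$ into its past, derive $p_n\ll q$ from the $(\leq K)$ inequality, and conclude $p\leq q$ using local causal closure of $U$. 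The main obstacle is the careful edge-case analysis — handling vertices and checking the continuous dependence of comparison points on parameters — and pinpointing precisely where local causal closure needs to enter.

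Assertions (iii) and (iv) then follow by applying (i) and (ii) on each comparison neighbourhood of a cover of $X$. The only subtlety appears in the $(\leq K)$ case of (iv), where each causal comparison neighbourhood must be locally causally closed in order to invoke (ii). Assuming $X$ strongly causal and locally (weakly) causally closed, Lemma~\ref{lem:causally_closed_comp_nbhds} refines any comparison cover to one by causally closed (hence locally causally closed) neighbourhoods, and (ii) then upgrades the causal bound to a strict causal bound on the refined cover.
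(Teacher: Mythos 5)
Your overall strategy is the same as the paper's: reduce causal triangles to timelike ones by approximation in (i), obtain the causality implications in (ii) by pushing points slightly and passing to limits using closedness of $\leq$ in $\lm{K}$ (resp.\ causal closedness of $U$), and globalize via Lemma \ref{lem:causally_closed_comp_nbhds} for (iii) and (iv). However, the central approximation step in (i) has a genuine gap. You choose $y_n$ on the realizer $[y,z]$ at $\tau$-distance $1/n$ from $y$ (so that $x\ll y_n$ comes for free by push-up) and then assert that the side lengths of $\Delta(x,y_n,z)$ converge to those of $\Delta(x,y,z)$ by continuity of $\tau$. Continuity of $\tau$ only gives $\tau(x,y_n)\to\tau(x,y)=0$ if $y_n\to y$ in the metric $d$, and $\tau(y,y_n)=1/n\to 0$ does not imply this: Theorem \ref{thm:equivCausTl} assumes no regularity, so nothing prevents the realizer $[y,z]$ from starting with a null segment $[y,w]$, $w\neq y$, with $\tau(x,w)>0$. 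Your points $y_n$ then accumulate at $w$, $\tau(x,y_n)\to\tau(x,w)>0$, and the comparison triangles of $\Delta(x,y_n,z)$ converge to a configuration with the wrong short side, so the limiting inequality is not the one required. Switching to the curve parameter ($y_n=\gamma(s_n)$, $s_n\searrow 0$) repairs the convergence but destroys your push-up argument for $x\ll y_n$ in exactly the same non-regular situation, so the tension is real and needs an argument. The paper avoids it by approximating the degenerate vertex along an adjacent timelike side via its curve parameter ($y_t=\alpha(t)$, $t\nearrow 1$), for which $y_t\to y$ is automatic from continuity of the curve, so that continuity of $\tau$ genuinely yields convergence of the comparison configuration.

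In (ii), the subcase $p=q$ is not handled correctly: that the sides of the comparison triangle in $\lm{K}$ meet only in common vertices is irrelevant, since $p=q$ is an identity in $X$, where two sides (distance realizers) may a priori intersect away from the vertices; the comparison points $\bp,\bq$ then lie on different sides of $\Delta(\bx,\by,\bz)$ and are in general not even causally related, so $\bp=\bq$ is not forced. The paper treats all subcases uniformly by applying the comparison inequality to $(p_t,q)$ with $p_t\ll p$ on the same side as $p$: from $p\leq q$ one gets $\tau(\bp_t,\bq)\geq\tau(p_t,q)\geq\tau(p_t,p)>0$ for all $t$, and closedness of $\leq$ in $\lm{K}$ gives $\bp\leq\bq$ in the limit; this also covers $p=q$, and your case (c) is exactly this argument, so the fix is simply to drop case (b) and run (c) whenever $p,q$ are not on a common side. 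In the $(\leq K)$ direction note additionally that invoking (local) causal closedness requires the approximating points to converge to $p$ in $d$, so there they must be taken via the curve parameter rather than by $\tau$-distance, the same regularity-free subtlety as above. Your globalization in (iii) and (iv), including the use of Lemma \ref{lem:causally_closed_comp_nbhds} in the upper-bound case of (iv), matches the paper.
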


\begin{proof}
(i) For the non-trivial direction of the claim, suppose that $U$ is a comparison neighbourhood in the sense of timelike triangle comparison, and let $\Delta(x,y,z)$ be a causal triangle in $U$ satisfying size bounds, and with $\tau(y,z)=0$. Let $p$ and $q$ be points on the timelike sides of the triangle. 
Furthermore, let $\Delta(\bx,\by,\bz)$ be a comparison triangle for $\Delta(x,y,z)$ and denote by $\bp$ and $\bq$ the comparison points for $p$ and $q$ in that triangle, respectively. 
We distinguish the following cases:

1.) $p, q \neq y$: Let $p \in [x, y]$, $q \in [x, z]$, and let
$\alpha: [0,1]\to U$ be a geodesic realizing $[x, y]$, so $y=\alpha(1)$ and, say, $p=\alpha(t_0)$ for $t_0 \in (0,1)$. 
Set $y_t \coloneqq \alpha(t)$, then for  any $t\in (t_0,1)$, $\Delta(x,y_t,z)$ is a timelike triangle containing
$p$ and $q$. Let $\Delta(\bar x,\bar y_t,\bar z)$ be a comparison triangle for $\Delta(x,y_t,z)$ and denote by $\bp_t$ and $\bq_t$ the 
comparison points for $p$ and $q$ therein. 
Then by timelike triangle comparison for $\Delta(x,y_t,z)$, we have $\tau(p,q) \le \tau(\bar p_t, \bar q_t)$ 
(resp.\ $\tau(p,q) \ge \tau(\bar p_t, \bar q_t)$). 
We now argue that $\by_t \to \by$. 
Clearly, $y_t \to y$ and hence, since $\tau$ is continuous, $\tau(\bx,\by_t)=\tau(x,y_t) \to \tau(x,y)=\tau(\bx,\by)$ and $\tau(\by_t,\bz) = \tau(y_t,z) \to \tau(y,z)=\tau(\by,\bz)$. 
Fixing the segment $[\bx,\bz]$ in its place and assuming it is vertical (after applying a suitable Lorentz transformation), we see that $\by_t$ arises as the unique (up to reflection on $[\bx,\bz]$) point of intersection of hyperbolas with centers $\bx$ and $\bz$, respectively. 
Since $\tau$ is continuous, these hyperbolas transform continuously in $t$, with the one centered at $\bz$ degenerating into two line segments as $t \to 1$.
This shows $\by_t \to \by$, which immediately implies $\bp_t \to \bp$ and $\bq_t \to \bq$, and so $\tau(\bar p_t, \bar q_t) \to \tau(\bar p, \bar q)$. 
Thus we get $\tau(p,q)\le \tau(\bar p,\bar q)$ (resp.\ $\tau(p,q)\ge \tau(\bar p,\bar q)$), as claimed. 
The case of $p \in [x,z], q \in [x,y]$ is analogous. 

2.) $q=y$, $p\in [x, z]$: Again let $[x,y]$ be realized by the geodesic $\alpha: [0,1]\to U$ and set $q_t:=\alpha(t)$. 
By case 1.), $\tau(p,q_t) \le \tau(\bar p, \bar q_t)$ (resp.\ $\tau(p,q_t) \ge \tau(\bar p, \bar q_t)$), where $\bar q_t$ is the comparison point to $q$
in the triangle $\Delta(\bar x,\bar y,\bar z)$. Letting $t\nearrow 1$ we obtain $\tau(p,q)\le \tau(\bar p,\bar q)$ 
(resp.\ $\tau(p,q)\ge \tau(\bar p,\bar q)$) also in this case.

3.) $p=y$: Then $0=\tau(p,q)=\tau(\bar p, \bar q)$.

(ii) Let $U$ as in (ii) be a comparison neighbourhood in the sense of causal triangle comparison. Let $\Delta(x,y,z)$ be a causal triangle satisfying size bounds, and let $p,q$ each be either one of $x,y,z$ or lie on a timelike side of $\Delta(x,y,z)$. 
Let $\Delta(\bar{x},\bar{y},\bar{z})$ be a comparison triangle and $\bar{p},\bar{q}$ comparison points. 
Note that if $p,q$ are both vertices or lie on the same side, the required inequalities are always satisfied. So we may suppose that $p$ lies in the interior of the side $[x,z]$, say $p=\alpha(t_0)$, $t_0\in (0,1)$, where the geodesic $\alpha:[0,1]\to U$ realizes $[x,z]$. 
Let $p_t:= \alpha(t)$ for $t<t_0$. 
Then $p_t\ll p$ and $p_t\to p$ as $t\nearrow t_0$,
and similarly for the comparison points in $\Delta(\bar x,\bar y,\bar z)$ we have $\bar p_t\to \bar p$ as $t\nearrow t_0$. 
From this and the reverse triangle inequality for $\tau$ it follows that $p\le q \Rightarrow \tau(p_t,q)>0$ for all $t<t_0$. 
If $U$ is causally closed, the converse implication holds as well. 
Moreover, also in $\lm{K}$ we have $\tau(\bar p_t,\bar q) >0$ for all $t<t_0$ if and only if $\bar p \le \bar q$. 
Since $\tau(p_t,q) \le \tau(\bar p_t,\bar q)$ (resp.\ $\tau(p_t,q) \ge \tau(\bar p_t,\bar q)$), this verifies \eqref{eq:strict_causal_comp_implication} and thereby shows that $U$ is also a comparison neighbourhood in the sense of strict causal triangle comparison.

(iii) This is immediate from (i).

(iv) Recalling Lemma \ref{lem:causally_closed_comp_nbhds}, this is a direct consequence of (ii). 
\end{proof}

\begin{rem}[One-sided versions of (strict) causal triangle comparison] 
\label{rem: one sided strict causal}
In analogy to Definition \ref{def-cb-tr-onesided}, one can also introduce one-sided versions of (strict) causal triangle comparison by requiring one of $p,q$ to be a vertex of the triangle. 
The implication from (strict) causal triangle comparison to (strict) causal one-sided triangle comparison is obvious. 
The implications from strict causal one-sided triangle comparison to causal one-sided triangle comparison  and further to timelike one-sided triangle comparison are similiarly obvious. 
Thus, under the assumptions of Theorem \ref{thm:equivCausTl}, all of these notions are equivalent. 
\end{rem}

\subsection{The four-point condition}
In Alexandrov geometry, the four-point condition is a convenient reformulation used in both upper and lower curvature problems. Notably, it is used in a version of Toponogov's Theorem, cf. \cite{BGP92}. Its biggest advantage is that it does not require the existence of distance-realizers, i.e., it also works in a non-intrinsic setting. 
It is somewhat unique in the sense that, as the name suggest, it uses four points in contrast to essentially all previous formulations, which used three points (forming a hinge or a triangle), but at the same time the formulation is still fundamentally geometric in nature, so to say, in contrast to the convexity/concavity condition on $\tau$, which seems more analytical. 
The four-point condition is a bit more natural for the curvature bounded below case, which is why we give the definitions separately. As in the metric version, the four-point condition can be expressed both via distance and angle inequalities. 

Before giving the definition, it will be convenient to lay out some notational conventions.
\begin{defi}[Four-point configurations]\label{def:four-point configs}
    Let $X$ be a \LpLSn. 
    \begin{enumerate}
        \item By a \emph{timelike future four-point configuration} we mean a tuple of four points in $X$, usually denoted by $(y,x,z_1,z_2)$, satisfying the relations $y \ll x \ll z_1$ and $x \ll z_2$. 
        It is called \emph{endpoint-causal} if $z_1 \leq z_2$.

        \item By a \emph{causal future four-point configuration} we mean a tuple of four points in $X$, usually denoted by $(y,x,z_1,z_2)$, satisfying the relations $y \ll x \leq z_1$ and $x \leq z_2$. 
        It is called \emph{endpoint-causal} if $z_1 \leq z_2$.

        \item Given a timelike (resp.\ causal) future four-point configuration $(y,x,z_1,z_2)$ in $X$, by a \emph{four-point comparison configuration} in $\lm{K}$ we mean a tuple of four points $(\hy, \hx, \hz_1, \hz_2)$ such that $\tau(y,x)=\tau(\hy,\hx),\  \tau(y,z_i)=\tau(\hy,\hz_i)$ and $\tau(x,z_i)=\tau(\hx,\hz_i), i=1,2$, and such that $\hz_1$ and $\hz_2$ lie on opposite sides of the line through $\hy$ and $\hx$, see Figure \ref{fig: 4pt}. 
        
        \item A timelike (resp.\ causal) future four-point configuration in $X$ is called \emph{left (resp.\ right) straight} if $\tau(y,z_1)=\tau(y,x)+\tau(x,z_1)$ (resp.\ $\tau(y,z_2)=\tau(y,x)+\tau(x,z_2)$), i.e., $y, x$ and $z_1$ (resp.\ $z_2$) lie on a distance realizer, if it exists. 
        Note that the four-point comparison configuration (if it exists) of a four-point configuration is straight if and only if the original four-point configuration is straight, see Figure \ref{fig: straight 4pt}. 

        \item There are past versions of all of the aforementioned concepts, which result from reversing all causality relations in the obvious way. The resulting tuple will then be denoted by $(z_2, z_1, x, y)$. 
        
        When proving statements where some formulation of curvature bounds implies a curvature bound expressed via four-point configurations, we will only explicitly show how to obtain the desired inequality for a future configuration. 
        The case of a past configuration always follows symmetrically. 
        Since the list of decorating adjectives for four-point configurations is already quite long, we decided to omit the word `future' when dealing with future four-point configurations (which, in any case, are also clearly identified by the order of points in the above notation). 
    \end{enumerate}
\end{defi}
\begin{figure}
\begin{center}
\begin{tikzpicture}
\draw (0,-1) .. controls (0.2,-0.5) and (-0.1, -0.25) .. (0,0);
\draw (0,0) .. controls (-0.2,0.4) and (-0.3, 0.6) .. (-0.3,0.7);
\draw (0,0) .. controls (0.4,0.75) and (0.3, 1) .. (0.3,1.5);
\draw[dashed] (-0.3,0.7) .. controls (-0.2,0.85) and (0.1, 1.2) .. (0.3,1.5);
\draw[] (0,-1) .. controls (-0.3,-0.5) and (-0.3, 0.25) .. (-0.3,0.7);
\draw[] (0,-1) .. controls (0.6,0.6) and (0.5, 1) .. (0.3,1.5);

\begin{scriptsize}
\coordinate [circle, fill=black, inner sep=0.5pt, label=0: {$x$}] (ba) at (0,0);
\coordinate [circle, fill=black, inner sep=0.5pt, label=270: {$y$}] (bb) at (0,-1);
\coordinate [circle, fill=black, inner sep=0.5pt, label=180: {$z_1$}] (bx) at (-0.3,0.7);
\coordinate [circle, fill=black, inner sep=0.5pt, label=0: {$z_2$}] (by) at (0.3,1.5);
\coordinate [circle, fill=black, inner sep=0.5pt, label=90: {$\hx$}] (hx) at (3,0);
\coordinate [circle, fill=black, inner sep=0.5pt, label=270: {$\hy$}] (hy) at (3,-1);
\coordinate [circle, fill=black, inner sep=0.5pt, label=180: {$\hz_1$}] (hz1) at (2.5,0.6);
\coordinate [circle, fill=black, inner sep=0.5pt, label=0: {$\hz_2$}] (hz2) at (3.7,1.6);
\end{scriptsize}
\draw (hy) -- (hx) -- (hz1);
\draw (hx) -- (hz2);
\draw[dashed] (hz1) -- (hz2);
\draw[] (hy) -- (hz1);
\draw[] (hy) -- (hz2);
\end{tikzpicture}
\caption{A timelike four-point configuration in $X$ and a corresponding comparison configuration. }
\label{fig: 4pt}
\end{center}
\end{figure}
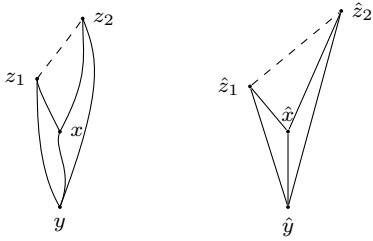

\begin{figure}
\begin{center}
\begin{tikzpicture}
\begin{scriptsize}
\draw (0,-1) .. controls (-0.3,-0.25) and (0,0.5) .. (-0.3,0.7) node (A)[circle, fill=black,inner sep=0.5pt,pos=0.4,label=0:$x$]{};
\end{scriptsize}
\draw (A) .. controls (0.2,0.75) and (0.3, 1) .. (0.3,1.5);
\draw[dashed] (-0.3,0.7) .. controls (-0.2,0.85) and (0.1, 1.2) .. (0.3,1.5);
\draw[] (0,-1) .. controls (0.6,0.6) and (0.5, 1) .. (0.3,1.5);

\draw (3,-1) -- (3,0) -- (3,0.7);
\draw (3,0) -- (3.7,1.5);
\draw[dashed] (3,0.7) -- (3.7,1.5);
\draw[] (3,-1) -- (3.7,1.5);
\begin{scriptsize}
\coordinate [circle, fill=black, inner sep=0.5pt, label=270: {$y$}] (bb) at (0,-1);
\coordinate [circle, fill=black, inner sep=0.5pt, label=180: {$z_1$}] (bx) at (-0.3,0.7);
\coordinate [circle, fill=black, inner sep=0.5pt, label=0: {$z_2$}] (by) at (0.3,1.5);
\coordinate [circle, fill=black, inner sep=0.5pt, label=180: {$\hx$}] (ba) at (3,0);
\coordinate [circle, fill=black, inner sep=0.5pt, label=270: {$\hy$}] (bb) at (3,-1);
\coordinate [circle, fill=black, inner sep=0.5pt, label=180: {$\hz_1$}] (bx) at (3,0.7);
\coordinate [circle, fill=black, inner sep=0.5pt, label=0: {$\hz_2$}] (by) at (3.7,1.5);
\end{scriptsize}

\end{tikzpicture}
\caption{A (left) straight timelike four-point configuration in $X$ and a corresponding comparison configuration.}
\label{fig: straight 4pt}
\end{center}
\end{figure}
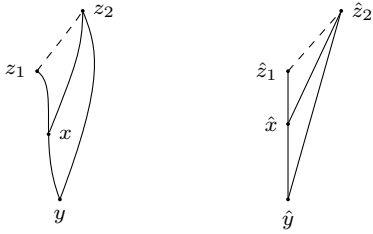
Intuitively, these four-point configurations could be thought of as two admissible causal (or even timelike) triangles $\Delta(y,x,z_i)$ that share the side $[y,x]$, but technically one has to be careful with this as the points in $X$ might not form a triangle if there are no geodesics joining the points. 
\begin{defi}[Size bounds for four-point configurations]
    Similar to the corresponding terminology for triangles and hinges, a four-point configuration $(y, x, z_1, z_2)$ in a \LpLS $X$ is said to \emph{satisfy size bounds for $K$} if there exists a four-point comparison configuration in $\lm{K}$. Evidently, this is the case precisely if $\tau(y,z_1)<D_K$ and $\tau(y,z_2)<D_K$. Note that the four-point comparison configuration is unique up to isometry of $\lm{K}$. 
    Throughout this work, we will assume all mentioned four-point configurations satisfy size bounds. 
\end{defi}
It turns out that lower and upper curvature bounds in the sense of any four-point condition have to be formulated quite differently, which is why we introduce them separately. We will go into more detail below.

\begin{defi}[Lower curvature bounds by timelike (resp.\ causal) four-point condition]
\label{def-cb-4pt-cbb}
Let $X$ be a \LpLSn. An open subset $U$ of $X$ is called a $(\geq K)$-comparison neighbourhood in the sense of the \emph{timelike (resp.\ causal) four-point condition} if:
\begin{enumerate}
\item $\tau$ is continuous on $(U\times U) \cap \tau^{-1}([0,D_K))$, and this set is open.
\item \label{def-cb-4pt-cbb.main} Let $(y, x, z_1, z_2)$ be a timelike (resp.\ causal) and endpoint-causal four-point configuration in $U$. Let $(\hy, \hx, \hz_1, \hz_2)$ be a four-point comparison configuration in $\lm{K}$. Then 
\begin{equation}
\tau(z_1,z_2) \geq \tau(\hz_1,\hz_2) \, .    
\end{equation}
In addition, for any timelike (resp.\ causal) and endpoint-causal past four-point configuration $(z_2, z_1, x, y)$ and a comparison configuration $(\hz_2, \hz_1, \hx, \hy)$, we require
\begin{equation}
\tau(z_2,z_1) \geq \tau(\hz_2,\hz_1) \, .    
\end{equation}
\end{enumerate}
\end{defi}
In the spirit of the equivalence between timelike and causal curvature bounds established in Theorem \ref{thm:equivCausTl}, we also give a more general version of the four-point condition. 
This is a priori a stricter property as it says more about a greater number of configurations. 
Their equivalence (under some mild assumptions) will be demonstrated below. 
Unsurprisingly, it is very convenient to have different equivalent formulations of the same property at hand. 
In particular, we expect the strict causal triangle comparison and the strict causal four-point condition to be especially useful for the slightly adapted setting of so-called \emph{Lorentzian metric spaces}, cf.\ \cite{McC23}, where the time separation function $\tau$ is replaced by a function $\ell$ that additionally encodes the causal relation. 
These conditions can also be more concisely formulated in terms of $\ell$. 

\begin{defi}[Lower curvature bounds by strict causal four-point condition]
\label{def:general_4_point}
Let $X$ be a \LpLSn. An open subset $U$ in $X$ is called a $(\geq K)$-comparison neighbourhood in the sense of the \emph{strict causal four-point condition} if it is a $(\geq K)$-comparison neighbourhood in the sense of the causal four-point condition, where condition (ii) in Definition \ref{def-cb-4pt-cbb} is strengthened to:

\begin{enumerate}
\item[(ii')] 
Let $(y, x, z_1, z_2)$ be a causal four-point configuration in $U$ (not necessarily endpoint-causal). 
Let $(\hy, \hx, \hz_1, \hz_2)$ be a four-point comparison configuration in $\lm{K}$. Then 
\begin{equation}
\tau(z_1,z_2) \geq \tau(\hz_1,\hz_2) \, \text{and}    
\end{equation}
\begin{equation}
\hat{z}_1\leq\hat{z}_2 \Rightarrow z_1\leq z_2\, .
\end{equation}
In addition, for any causal past four-point configuration $(z_2, z_1, x, y)$ and a comparison configuration $(\hz_2, \hz_1, \hx, \hy)$ we require 
\begin{equation}
\tau(z_2,z_1) \geq \tau(\hz_2,\hz_1) \, \text{and}    
\end{equation}
\begin{equation}
\hat{z}_2 \leq \hat{z}_1 \Rightarrow z_2 \leq z_1 \, .
\end{equation}
\end{enumerate}
\end{defi} 
Similar to the metric case, the timelike four-point condition can not only be described via a distance estimate but also via the behaviour of (comparison) angles, cf.\ \cite[Definition 2.3]{BGP92} and \cite[Definition II.1.10]{BH99}. 
\begin{lem}[Angle version of the timelike four-point condition for lower curvature bounds]
\label{lem-angle version 4pt}
Let $U$ be an open subset in a Lorentzian pre-length space $X$ which satisfies Definition \ref{def-cb-4pt-cbb}(i). 
Let $(y, x, z_1, z_2)$ be a timelike and endpoint-causal four-point configuration in $U$ and let $(\hy, \hx, \hz_1, \hz_2)$ be a comparison configuration in $\lm{K}$. Then $\tau(z_1,z_2) \geq \tau(\hz_1,\hz_2)$ if and only if
\begin{equation}
\label{eq: angle version four point}
\tilde{\ma}_x(z_1,z_2)\leq\tilde{\ma}_x(z_1,y)+\tilde{\ma}_x(y,z_2)\,.
\end{equation}
In addition, if $(z_2,z_1,x,y)$ is a timelike past and endpoint-causal four-point configuration and $(\hz_2, \hz_1, \hx, \hy)$ a comparison configuration, then $\tau(z_2,z_1) \geq \tau (\hz_2, \hz_1)$ if and only if \eqref{eq: angle version four point} is satisfied.
\end{lem}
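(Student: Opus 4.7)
The idea I would use is to decompose the four-point comparison configuration into two sub-triangles sharing the side $[\hy,\hx]$, and then translate the proposed $\tau$-inequality into an angle inequality at $\hx$ via the law of cosines monotonicity in $\lm{K}$. By the very definition of the four-point comparison configuration, the triangles $\Delta(\hy,\hx,\hz_1)$ and $\Delta(\hy,\hx,\hz_2)$ have the same side lengths as $\Delta(y,x,z_1)$ and $\Delta(y,x,z_2)$, so they may be taken as comparison triangles for the latter. In particular, the angles they subtend at $\hx$ coincide with the corresponding comparison angles:
\begin{equation}
\ma_{\hx}(\hy,\hz_1) = \tilde{\ma}_x(y,z_1), \qquad \ma_{\hx}(\hy,\hz_2) = \tilde{\ma}_x(y,z_2).
\end{equation}
Because $\hz_1$ and $\hz_2$ lie on opposite sides of the line through $\hy$ and $\hx$, these two angles add up to the full angle at $\hx$, i.e., $\ma_{\hx}(\hz_1,\hz_2) = \tilde{\ma}_x(z_1,y) + \tilde{\ma}_x(y,z_2)$.

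Next, I would set up a comparison triangle $\Delta(\bar x,\bar z_1,\bar z_2)$ for the admissible causal triangle $\Delta(x,z_1,z_2)$, which exists since $x\ll z_1 \leq z_2$ and the size bounds are inherited from those on the four-point configuration ($\tau(x,z_i) \leq \tau(y,z_i) < D_K$). By construction, the angle at $\bar x$ is $\tilde{\ma}_x(z_1,z_2)$, and the two sides adjacent to $\bar x$ have the same lengths $\tau(x,z_1)$ and $\tau(x,z_2)$ as the corresponding sides at $\hx$. The law of cosines monotonicity in $\lm{K}$ (cf.\ \cite[Remark 2.5]{BS22}) then gives, for triangles with two fixed timelike sides emanating from a common vertex, that the $\tau$-length of the opposite segment is a strictly decreasing function of the angle at that vertex, so long as the endpoints remain timelike related. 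Hence
\begin{equation}
\tau(z_1,z_2) \geq \tau(\hz_1,\hz_2) \quad \Longleftrightarrow \quad \tilde{\ma}_x(z_1,z_2) \leq \ma_{\hx}(\hz_1,\hz_2),
\end{equation}
and substituting the decomposition above yields the claimed equivalence.

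The one subtlety I would need to address is the edge case in which $\hz_1$ and $\hz_2$ are not timelike related, so that $\tau(\hz_1,\hz_2)=0$: then the distance inequality is trivially satisfied, but the angle inequality is as well, since $\ma_{\hx}(\hz_1,\hz_2)$ then reaches or exceeds the critical value above which no timelike relation is possible for the given side lengths $\tau(x,z_i)$, and this critical angle is itself an upper bound on every comparison angle $\tilde{\ma}_x(z_1,z_2)$ with those side lengths. The past case is handled by the same argument after reversing time orientation, which preserves both comparison triangles and comparison configurations. The only genuinely delicate point is keeping the direction of monotonicity in the Lorentzian law of cosines straight — larger angle corresponds to \emph{smaller} $\tau$-distance, opposite to the Riemannian convention — but this is by now standard in the framework.
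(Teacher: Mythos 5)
Your proof is correct and follows essentially the same route as the paper's: decompose the angle at $\hx$ using additivity of angles in the two-dimensional model space, identify the sub-angles with the comparison angles $\tilde{\ma}_x(z_1,y)$ and $\tilde{\ma}_x(y,z_2)$, and then compare the two hinges with equal adjacent side lengths at $\bx$ and $\hx$. The only difference is in the final step, where the paper invokes the Hinge Lemma of Alexander--Bishop (which covers the possibly non-timelike segment between $\hz_1$ and $\hz_2$ in one stroke), while you use law of cosines monotonicity together with an explicit critical-angle treatment of that edge case — both are valid.
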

\begin{proof}
It will suffice to only consider the future case. 
Let $\Delta(\bx,\bz_1,\bz_2)$ be a comparison triangle for (the possibly merely causal) triangle $\Delta(x,z_1,z_2)$, then by definition $\tilde{\ma}_x(z_1,z_2) = \ma_{\bx}(\bz_1,\bz_2)$. 
For the hyperbolic angles in the comparison configuration $(\hy, \hx, \hz_1, \hz_2)$, we have
\begin{equation}
\ma_{\hat{x}}(\hat{z}_1,\hat{z}_2)=\ma_{\hat{x}}(\hat{y},\hat{z}_1) + \ma_{\hat{x}}(\hat{y},\hat{z}_2)  = \tilde{\ma}_x(z_1,y)+\tilde{\ma}_x(y,z_2) \, , 
\end{equation}
where the first equality is due to the triangle equality of angles in (2-dimen\-sio\-nal) spacetimes. Note that $\tau(\bar{x},\bar{z}_1)=\tau(x,z_1)=\tau(\hat{x},\hat{z}_1)$ and $\tau(\bar{x},\bar{z}_2)=\tau(x,z_2)=\tau(\hat{x},\hat{z}_2)$, i.e., the sides adjacent to the angles at $\bx$ and $\hx$ have the same lengths. Thus, we can use the Hinge Lemma, cf.\ \cite[Lemma 2.2]{AB08}, to read off the desired equivalence directly. 
\end{proof}

\begin{pop}[Angle comparison implies timelike four-point condition for lower curvature bounds]
\label{pop: angle comparison implies four-point condition CBB}
Let $U$ be an open subset in a regular Lorentzian pre-length space $X$. 
If $U$ is a $(\geq K)$-comparison neighbourhood in the sense of angle comparison, then it is a $(\geq K)$-comparison neighbourhood in the sense of the timelike four-point condition.
\end{pop}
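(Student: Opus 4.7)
The plan is to reduce the distance inequality $\tau(z_1,z_2)\geq\tau(\hz_1,\hz_2)$ to an inequality on comparison angles at $x$ via Lemma \ref{lem-angle version 4pt}, and then to establish the latter by chaining together angle comparison with the triangle inequality for angles (which is included in the definition of lower angle comparison).

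Concretely, take a timelike endpoint-causal future configuration $(y,x,z_1,z_2)$ in $U$ (the past case is symmetric). By Lemma \ref{lem-angle version 4pt} it is enough to show
\begin{equation*}
\tilde\ma_x(z_1,z_2)\leq \tilde\ma_x(z_1,y)+\tilde\ma_x(y,z_2).
\end{equation*}
Since $U$ is $D_K$-geodesic and all the $\tau$-values between $x$ and the three other points are less than $D_K$, there exist timelike distance realizers: $\alpha$ from $x$ to $z_1$ (future-directed), $\gamma$ from $x$ to $z_2$ (future-directed) and $\beta$ from $x$ to $y$ (past-directed). So $\alpha,\gamma$ have the same time orientation, opposite to that of $\beta$.

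Next I would feed each of the three pairs $(\alpha,\beta)$, $(\beta,\gamma)$, $(\alpha,\gamma)$ into the angle comparison inequality (iii) of Definition \ref{def-cb-ang}. Keeping careful track of the sign $\sigma$: for $(\alpha,\beta)$ and $(\beta,\gamma)$ the two curves have opposite time orientation, so $\sigma=+1$ and the signed inequality immediately gives
\begin{equation*}
\ma_x(\alpha,\beta)\leq \tilde\ma_x(z_1,y),\qquad \ma_x(\beta,\gamma)\leq \tilde\ma_x(y,z_2).
\end{equation*}
For $(\alpha,\gamma)$, however, both curves are future-directed, so $\sigma=-1$ and the signed inequality flips to
\begin{equation*}
\ma_x(\alpha,\gamma)\geq \tilde\ma_x(z_1,z_2).
\end{equation*}
Finally, applying the triangle inequality for angles (Definition \ref{def-cb-ang}(iv)), which is applicable precisely because $\alpha,\gamma$ share a time orientation and $\beta$ has the opposite one, yields
\begin{equation*}
\tilde\ma_x(z_1,z_2)\leq \ma_x(\alpha,\gamma)\leq \ma_x(\alpha,\beta)+\ma_x(\beta,\gamma)\leq \tilde\ma_x(z_1,y)+\tilde\ma_x(y,z_2),
\end{equation*}
which is exactly the inequality required by Lemma \ref{lem-angle version 4pt}.

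The only point of caution, rather than a genuine obstacle, is matching the sign conventions correctly so that the use of angle comparison produces the bound in the right direction; this is what makes the configuration of two same-orientation curves $\alpha,\gamma$ and one opposite-orientation curve $\beta$ essential, and it is also the reason why condition (iv) of Definition \ref{def-cb-ang} is tailored precisely to this situation. Size bounds are automatic: the configuration is assumed to satisfy size bounds for $K$, which gives $\tau(y,z_i)<D_K$, and all intermediate $\tau$-values appearing above are bounded by these.
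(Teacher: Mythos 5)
Your argument is correct and follows essentially the same route as the paper: existence of the three realizers from $x$ via $D_K$-geodesicity, the chain $\tilde\ma_x(z_1,z_2)\leq \ma_x(\alpha,\gamma)\leq \ma_x(\alpha,\beta)+\ma_x(\beta,\gamma)\leq \tilde\ma_x(z_1,y)+\tilde\ma_x(y,z_2)$ obtained from Definition \ref{def-cb-ang}(iii) (with the signs treated exactly as you do) and (iv), and the reduction to Lemma \ref{lem-angle version 4pt}. Your explicit bookkeeping of the sign $\sigma$ simply spells out what the paper summarizes as ``(with corresponding signs)''.
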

\begin{proof}
Let $U$ be a $(\geq K)$-comparison neighbourhood in the sense of angle comparison and let $(y, x, z_1, z_2)$ be a timelike and endpoint-causal four-point configuration in $U$. Take distance realizers $\alpha$ from $x$ to $z_1$, $\beta$ from $x$ to $y$, $\gamma$ from $x$ to $z_2$, which we know exist by Definition \ref{def-cb-ang}(ii). 
We obtain the following inequalities for angles:
\begin{equation}
    \tilde{\ma}_x(z_1,z_2)\leq\ma_x(\alpha,\gamma)\leq\ma_x(\alpha,\beta)+\ma_x(\beta,\gamma) \leq \tilde{\ma}_x(z_1,y)+\tilde{\ma}_x(y,z_2), 
\label{eq: lcb-angle-->4pt-1}
\end{equation}
where we used (iii) (with corresponding signs) and (iv) in Definition \ref{def-cb-ang}. 
The claim therefore follows from Lemma \ref{lem-angle version 4pt}. 
The same inequality can be obtained for a past four-point configuration in complete analogy. 
\end{proof}

When introducing a four-point condition for timelike curvature bounded above, we run into the following problem: in the above proof, the inequalities from angle comparison and hinge comparison reverse, but the triangle inequality of angles does not. 
However, there is a way around this, as equality in the triangle inequality of angles is enough to obtain inequalities in the opposite direction. 
This is achieved by restricting to straight four-point configurations. 
We also briefly want to justify (ii) in the following definition. 
In essence, the existence of $\tau$-midpoints is assumed\footnote{The existence of $\tau$-midpoints is comparatively strong, but it is easy to formulate and we are mostly working in an intrinsic setting anyways (where the existence of such points is not automatic). 
Technically, requiring a weaker condition, like the existence of distance realizers that are partially defined on a dense subset of some interval suffices.} in order to ensure that the definition does turn into a void statement when its assumptions cannot be met. 
Indeed, there exist exotic spaces without distance realizers, where there simply exist too few (or none at all) straight four-point configurations, in which case the curvature bound might be trivially satisfied. 
As an example for such a space, consider a locally finite random selection of points in the Minkowski plane, equipped with the restrictions of the causal relation and time separation function from ambient space. 
Then almost surely no three points lie on a line.
\begin{defi}[Upper curvature bounds by timelike (resp.\ causal) four-point condition]
\label{def-cb-4pt-cba}
Let $X$ be a \LpLSn. 
An open subset $U$ is called a $(\leq K)$-comparison neighbourhood in the sense of the \emph{timelike (resp.\ causal) four-point condition} if:
\begin{enumerate}
\item $\tau$ is continuous on $(U\times U) \cap \tau^{-1}([0,D_K))$, and this set is open. 

\item For all $x \ll z$ in $U$ with $\tau(x,z) < D_K$ there exists a $\tau$-midpoint in $U$, i.e., a point $y \in U$ such that $\tau(x,y)=\tau(y,z)=\frac{1}{2}\tau(x,z)$. 

\item Let $(y, x, z_1, z_2)$ be a straight timelike (resp.\ causal) and endpoint-causal four-point configuration in $U$. 
Let $(\hat{y}, \hat{x}, \hat{z}_1 , \hat{z}_2)$ be a straight four-point comparison configuration in $\lm{K}$. 
Then 
\begin{equation}
\tau(z_1,z_2) \leq \tau(\hat{z}_1,\hat{z}_2)\,.
\end{equation} 
In addition, for any straight timelike (resp.\ causal) and endpoint-causal past four-point configuration $(z_2, z_1, x, y)$ and a comparison configuration $(\hz_2, \hz_1, \hx, \hy)$ we require 
\begin{equation}
\tau(z_2,z_1) \leq \tau(\hat{z}_2,\hat{z}_1)\,.
\end{equation} 
\end{enumerate}
\end{defi}
\begin{defi}[Upper curvature bounds by strict causal four-point condition]
Let $X$ be a \LpLSn. An open subset $U$ of $X$ is called a $(\leq K)$-comparison neighbourhood in the sense of the \emph{strict causal four-point condition} if it is a $(\leq K)$-comparison neighbourhood in the sense of the causal four-point condition, where under the assumptions of condition (iii) we additionally require 
\begin{equation}\label{eq:causality_implication_strict_4pt}
    z_1\leq z_2\Rightarrow\hat{z}_1\leq\hat{z}_2 \, ,
\end{equation}
and for past configurations we additionally require 
\begin{equation}
    z_2\leq z_1 \Rightarrow \hat{z}_2 \leq \hat{z}_1 \, .
\end{equation}
\end{defi}
\begin{rem}[Endpoint-causality in the strict four-point condition]
\label{rem:endpoint-causality in strict 4pt}
Note that for upper curvature bounds, a more general formulation allowing for non-endpoint-causal four-point configurations is superfluous. Indeed, whenever $z_1$ and $z_2$ are not causally related, both the inequality $\tau(z_1,z_2) \leq \tau(\hz_1,\hz_2)$ and the implication $z_1\leq z_2\Rightarrow\hat{z}_1\leq\hat{z}_2$ are trivially satisfied. 
It therefore essentially only makes sense to consider endpoint-causal four-point configurations. 
\end{rem}
\begin{rem}[Relevant constellations of causal four-point configurations]
\label{rem: relevant constellations of four-point configs}
Here we show that it is not necessary to look at causal four-point configurations $(y,x,z_1,z_2)$ where $x \leq z_2$ are null related, where for curvature bounded below in the sense of the strict causal four-point condition one additionally needs that the space (or the comparison neighbourhood) is regular to conclude this. 
However, the latter is not an actual restriction since the only statement involving the strict causal four-point condition, Proposition \ref{prop:4-point_causal_noncausal_equivalent}, assumes this anyways. 

To begin with, we cannot have $z_1\ll z_2$ as otherwise $x\leq z_1\ll z_2$ would yield a timelike relation $x\ll z_2$, and the same works for $\hat{z}_1\not\ll\hat{z}_2$. 
In particular, the $\tau$-inequality in any four-point condition is trivially satisfied. 
Moreover, $\hz_1\not\leq\hz_2$ unless the four-point situation is left-straight and $x$ is null before $z_1$ (as $\hz_1$ and $\hz_2$ are on opposite sides of the line extending $[\hy,\hx]$). 
Under the previously mentioned assumption of regularity, $x=z_1$ follows in this case (as $y,x,z_1$ are collinear with $y\ll x$ and $x,z_1$ null related) and therefore lower curvature bounds in the sense of the strict causal four-point condition automatically hold. 

For upper curvature bounds in the sense of the strict causal four-point condition, assume $(y,x,z_1,z_2)$ is straight with $x \leq z_2$ null related. Since we need to consider endpoint-causal configurations by Remark \ref{rem:endpoint-causality in strict 4pt}, it must be the case that $x \leq z_1$ are null related as well, otherwise $x \ll z_1 \leq z_2$ would yield $x \ll z_2$, a contradiction to them being null related. 
If $(y,x,z_1,z_2)$ is left straight, we have $\tau(y,z_1)=\tau(y,x)+\tau(x,z_1) = \tau(y,x)$. 
In particular, the comparison points $\hat y,\hat x,\hat z_1$ lie on a line, and as $\tau(\hat x,\hat z_1)=0$, we conclude $\hz_1=\hx \leq \hz_2$.
Thus, $\hz_1=\hx \leq \hz_2$. 
On the other hand, if it is right straight, then $\tau(y,z_2)=\tau(y,x)+\tau(x,z_2)=\tau(y,x)$. 
Further, $\tau(y,z_2) \geq \tau(y,z_1) + \tau(z_1,z_2) = \tau(y,z_1)$ and also $\tau(y,z_1) \geq \tau(y,x) + \tau(x,z_1) = \tau(y,x)$, so $\tau(y,x) = \tau(y,z_1) = \tau(y,z_2)$. 
In particular, this configuration is also left straight, making both $\hat y,\hat x, \hat z_1$ and $\hat y,\hat x, \hat z_2$ lie on a line, which forces $\hx=\hz_1=\hz_2$. 
This shows that \eqref{eq:causality_implication_strict_4pt} is satisfied. 

Finally, if $x=z_1$ or $x=z_2$, any $\tau$-inequality and implication of causal relation is trivially satisfied. 

Altogether (assuming the space is regular in the case of the strict causal four-point condition for curvature bounded below), we can always assume that all four points are distinct and $x\ll z_2$. 
\end{rem}
In order to show that angle comparison implies timelike four-point comparison in the case of upper curvature bounds, we require the following auxiliary result. 
It is in fact a variant of \cite[Theorem 4.5(i)]{BS22}, where we do not rely on the fact that one of the angles exists, cf. \cite[Lemma 4.10]{BS22}. 
\begin{lem}[Triangle inequality of angles, special case]
\label{lem: triangle inequality special case}
    Let $X$ be a \LpLS with curvature bounded above by $K$ in the sense of timelike triangle comparison. Let $\alpha$ and $\beta$ be future directed distance realizers, and let $\gamma$ be a past-directed distance realizer, all emanating from the same point $p$, such that the concatenation of $\gamma$ and $\beta$ again is a distance realizer. Then 
    \begin{equation}
        \ma_p(\alpha, \gamma) \leq \ma_p(\alpha, \beta) \, . 
    \end{equation}
    Since $\ma_p(\beta, \gamma)=0$, cf.\ \cite[Lemma 3.4]{BS22}, this amounts to the following triangle inequality of angles:
    \begin{equation}
        \ma_p(\alpha, \gamma) \leq \ma_p(\alpha, \beta) + \ma_p(\beta, \gamma) \, . 
    \end{equation}
\end{lem}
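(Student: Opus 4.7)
The strategy is to lift the situation to the comparison space $\lm{K}$, where the concatenated distance realizer through $p$ becomes a straight timelike geodesic, and then to exploit the fact that at an interior point of such a geodesic the unsigned hyperbolic angles toward its past and future portions agree.

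First, I fix $u>0$ small and, given $s>0$ small, I choose $t>0$ small enough so that $\beta(t)\ll\alpha(u)$; this is possible for each fixed $u$ by continuity/openness of $\ll$ combined with $p\ll\alpha(u)$. The four points $\gamma(s),p,\beta(t),\alpha(u)$ satisfy the causal relations needed to form an admissible causal triangle $\Delta(\gamma(s),\beta(t),\alpha(u))$, since $\gamma(s)\ll\beta(t)$ comes from the concatenated realizer, $\gamma(s)\ll\alpha(u)$ from $\gamma(s)\ll p\ll\alpha(u)$, and $\beta(t)\ll\alpha(u)$ was just arranged. Let $\Delta(\hat q,\hat r,\hat w)$ be its comparison triangle in $\lm{K}$ and let $\hat p$ be the comparison point of $p$ on $[\hat q,\hat r]$ at distance $s$ from $\hat q$. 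Upper timelike triangle comparison then yields
\[
u':=\tau(\hat p,\hat w)\le\tau(p,\alpha(u))=u,
\]
while collinearity of $\hat q,\hat p,\hat r$ on a timelike geodesic of $\lm K$ gives $\ma_{\hat p}(\hat q,\hat w)=\ma_{\hat p}(\hat r,\hat w)$, because the unit past-directed tangent at $\hat p$ toward $\hat q$ is the negative of the unit future-directed tangent toward $\hat r$.

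Next, I compare the two sub-triangles $\Delta(\hat q,\hat p,\hat w)$ and $\Delta(\hat p,\hat r,\hat w)$ of the big comparison to the standalone comparison triangles of $\Delta(\gamma(s),p,\alpha(u))$ and $\Delta(p,\beta(t),\alpha(u))$, respectively. Each pair shares one adjacent side ($s$ or $t$) and the full opposite side (since $\tau(\hat q,\hat w)=\tau(\gamma(s),\alpha(u))$ and $\tau(\hat r,\hat w)=\tau(\beta(t),\alpha(u))$), with $u'$ in place of $u$. In the past-future configuration, law of cosines monotonicity in $\lm K$ (at fixed opposite side and fixed other adjacent side, shrinking the adjacent side from $u$ to $u'$ increases the angle) gives $\tilde\ma_p^K(\gamma(s),\alpha(u))\le\ma_{\hat p}(\hat q,\hat w)$. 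In the future-future configuration, with the regime $u>t$ guaranteed by $\beta(t)\ll\alpha(u)$, monotonicity has the opposite sign and produces $\ma_{\hat p}(\hat r,\hat w)\le\tilde\ma_p^K(\beta(t),\alpha(u))$. Chaining these through the angle equality gives
\[
\tilde\ma_p^K(\gamma(s),\alpha(u))\le\tilde\ma_p^K(\beta(t),\alpha(u)).
\]

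Finally, I pass to the limit: sending $s\to 0$ on the left and then $t,u\to 0$ on the right (keeping the admissibility constraint $\beta(t)\ll\alpha(u)$) produces the desired inequality $\ma_p(\alpha,\gamma)\le\ma_p(\alpha,\beta)$. Crucially, the chained inequality survives passage to $\limsup$ on the left and $\liminf$ on the right, so the conclusion holds without a priori assuming the limits defining the angles exist. I expect the most delicate point to be the sign analysis in the future-future law of cosines monotonicity — the derivative of the angle with respect to the varying adjacent side changes sign depending on whether this side is larger or smaller than the other — which is why the choice $\beta(t)\ll\alpha(u)$, forcing $u>t$, is essential at the setup stage.
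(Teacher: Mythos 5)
Your proof is correct, but it takes a genuinely different route from the paper. The paper considers the two subtriangles $\Delta(\gamma(r),p,\alpha(t))$ and $\Delta(p,\beta(s),\alpha(t))$, glues their comparison triangles in $\lm{K}$ along the common side $[\bp,\bz]$, and invokes the Alexandrov Lemma \cite[Proposition 2.42]{BORS23} together with the upper bound to conclude that this configuration is concave, i.e.\ $\ma_{\bp}(\bx,\bz)\leq\ma_{\bp}(\by,\bz)$, and then passes to the limit. You instead take the comparison triangle of the single large triangle $\Delta(\gamma(s),\beta(t),\alpha(u))$, place the comparison point $\hat p$ of $p$ on its straight side, use the one-sided upper comparison $\tau(\hat p,\hat w)\leq\tau(p,\alpha(u))$, the equality of the two unsigned angles at $\hat p$ across the straight side, and two law-of-cosines monotonicity steps to transfer to the standalone comparison angles $\tilde\ma^K_p(\gamma(s),\alpha(u))$ and $\tilde\ma^K_p(\beta(t),\alpha(u))$; in effect you re-prove by hand the direction of the gluing lemma that the paper cites. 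What each buys: the paper's argument is shorter and orientation-free (it does not care whether $\beta(t)\ll\alpha(u)$ or $\alpha(u)\ll\beta(t)$), while yours is self-contained but needs the monotonicity of the angle in an \emph{adjacent} side with the opposite side fixed, which is not the form of law-of-cosines monotonicity (\cite[Remark 2.5]{BS22}, two adjacent sides fixed, opposite side varied) used elsewhere in the paper; it is nevertheless an easy consequence of the law of cosines in $\lm{K}$ for small configurations, and your sign analysis (using that $\hat p\ll\hat r\ll\hat w$ forces $u,u'\geq t+\tau(\beta(t),\alpha(u))$, secured by arranging $\beta(t)\ll\alpha(u)$) is the right one. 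Two small points you should make explicit: $\tau(\hat p,\hat w)\geq t+\tau(\beta(t),\alpha(u))>0$, so both angles at $\hat p$ are indeed defined; and since the same $u$ appears on both sides of your chained inequality, the limiting step really yields a comparison of the two limits (respectively limsups) along $t=t(u)\to0$, not a genuine $\limsup$-versus-$\liminf$ estimate --- this suffices when the angles exist, which is the same level of rigor as the paper's closing sentence.
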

\begin{proof}
    Choose any parameters $r,s,t$ such that, say, $x=\gamma(r), y=\beta(s)$ and  $z=\alpha(t)$ form a timelike triangle $\Delta(x,y,z)$ (the direction of the timelike relation between the points on $\alpha$ and $\beta$ is not important). 
    Consider the two subtriangles $\Delta(x,p,z)$ and $\Delta(p,y,z)$ and consider a comparison configuration consisting of $\Delta(\bx,\bp,\bz)$ 
    and $\Delta(\bp,\by,\bz)$ (such that they share the common side between $\bp$ and $\bz$). 
    Due to upper curvature bounds and the Alexandrov Lemma, cf.\  \cite[Proposition 2.42]{BORS23}, this is a concave configuration, i.e., 
    \begin{equation}
        \tilde{\ma}_p(\gamma(r),\alpha(t))=\ma_{\bp}(\bx, \bz) \leq \ma_{\bp}(\by, \bz)=\tilde{\ma}_p(\beta(s),\alpha(t))\,.
    \end{equation}
    The desired inequality then follows from the definition of angles as limits of comparison angles. 
\end{proof}
\begin{pop}[Angle comparison implies timelike four-point condition for upper curvature bounds]
\label{pop: angle comparison implies four-point condition CBA}
Let $U$ be an open subset of a regular \LpLS $X$. 
If $U$ is a $(\leq K)$-comparison neighbourhood in the sense of angle comparison, then it is also a $(\leq K)$-comparison neighbourhood in the sense of the timelike four-point condition. 
\end{pop}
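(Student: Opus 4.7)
The plan is to verify the three properties (i), (ii), (iii) listed in Definition \ref{def-cb-4pt-cba}. Property (i) is literally the same condition as in Definition \ref{def-cb-ang}(i) and transfers for free. Property (ii) follows because every $(\leq K)$-angle-comparison neighbourhood is $D_K$-geodesic: given $x \ll z$ in $U$ with $\tau(x,z) < D_K$, any realizer guaranteed by that property provides a $\tau$-midpoint at parameter $\tfrac12 \tau(x,z)$. All substance therefore lies in (iii).

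For (iii), I would reduce the required distance inequality to an angle inequality via Lemma \ref{lem-angle version 4pt}, whose iff reads in the opposite direction without any extra work: $\tau(z_1,z_2)\leq\tau(\hz_1,\hz_2)$ is equivalent to
\begin{equation}
\tilde{\ma}_x(z_1,z_2)\geq\tilde{\ma}_x(z_1,y)+\tilde{\ma}_x(y,z_2)\,.
\end{equation}
Let $(y,x,z_1,z_2)$ be a straight timelike endpoint-causal four-point configuration, say left-straight (the right-straight case is symmetric, and past configurations follow by time-reversal). Choose distance realizers $\alpha\colon[0,a]\to U$ from $x$ to $z_1$, $\beta\colon[0,b]\to U$ from $x$ to $y$, and $\gamma\colon[0,c]\to U$ from $x$ to $z_2$; these exist because $U$ is $D_K$-geodesic. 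Left-straightness means that the reverse of $\beta$ concatenated with $\alpha$ is a realizer from $y$ to $z_1$, so for small $s,t$ the three points $\beta(t), x,\alpha(s)$ lie on a realizer with $\tau(\beta(t),\alpha(s))=s+t$; hence the comparison triangles degenerate and $\tilde{\ma}_x(z_1,y)=0$ (and also $\ma_x(\alpha,\beta)=0$). The inequality to prove reduces to $\tilde{\ma}_x(z_1,z_2)\geq\tilde{\ma}_x(y,z_2)$.

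Now I would apply angle comparison for upper curvature bounds to the two hinges at $x$. Since $\alpha,\gamma$ are both future directed, the signed inequality reverses to $\ma_x(\alpha,\gamma)\leq\tilde{\ma}_x(z_1,z_2)$, while the opposite time orientations of $\beta,\gamma$ give $\ma_x(\beta,\gamma)\geq\tilde{\ma}_x(y,z_2)$. It then suffices to show the genuine-angle inequality $\ma_x(\beta,\gamma)\leq\ma_x(\alpha,\gamma)$. This is precisely the content of Lemma \ref{lem: triangle inequality special case}, applied by renaming: its future realizers correspond to our $\gamma$ and $\alpha$, its past realizer to our $\beta$, and the concatenation hypothesis is exactly the straightness we have assumed.

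The main technical obstacle is that Lemma \ref{lem: triangle inequality special case} is stated under timelike triangle comparison, while we have only angle comparison on $U$. I expect the resolution to be that the Alexandrov-Lemma-driven argument underlying that lemma actually depends only on the concavity of adjacent comparison-triangle configurations, a property which passes through the already-established equivalence with hinge comparison (Proposition \ref{prop:equivalence_hinge_angle}); alternatively one could simply note that the whole proof takes place in $U$ and invoke the lemma within that localized comparison framework. Once $\ma_x(\beta,\gamma)\leq\ma_x(\alpha,\gamma)$ is secured, chaining the four displayed inequalities yields $\tilde{\ma}_x(y,z_2)\leq\tilde{\ma}_x(z_1,z_2)$ and hence, through Lemma \ref{lem-angle version 4pt}, the desired $\tau(z_1,z_2)\leq\tau(\hz_1,\hz_2)$, completing the verification of (iii).
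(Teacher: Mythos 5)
Your argument is correct and essentially the paper's own: the same three realizers from $x$, the observation that straightness forces $\ma_x(\alpha,\beta)=0$ (resp.\ $\tilde{\ma}_x(z_1,y)=0$), the key angle inequality from Lemma \ref{lem: triangle inequality special case}, and signed angle comparison from Definition \ref{def-cb-ang}(iii), with only the endgame packaged differently — you convert the comparison-angle inequality to the $\tau$-inequality via the upper-bound angle version of the four-point condition, while the paper uses hinge comparison (Proposition \ref{prop:equivalence_hinge_angle}) together with law of cosines monotonicity, which is the same Hinge-Lemma argument in a different wrapper. The ``technical obstacle'' you flag is no obstacle relative to the paper: its proof invokes Lemma \ref{lem: triangle inequality special case} in exactly the same way, without first upgrading angle comparison to triangle comparison, so your reliance on that lemma matches the paper's own proof.
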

\begin{proof}
Properties (i) and (ii) in Definition \ref{def-cb-4pt-cba} follow directly from (i) and (ii) in Definition \ref{def-cb-ang}. 

So let $(y, x, z_1, z_2)$ in $U$ be a straight timelike and endpoint-causal four-point configuration. 
Take distance realizers (which exist by our assumptions in Definition \ref{def-cb-ang}) $\alpha$ from $x$ to $z_1$, $\beta$ from $x$ to $y$, and $\gamma$ from $x$ to $z_2$. 
Note that for, say, a left straight configuration, $\alpha$ and $\beta$ fit together to a distance realizer from $y$ through $x$ to $z_1$ (the right straight case works analogously, with $\beta,\gamma$ fitting together).  
In particular, $\ma_x(\alpha,\beta)=0$ (by \cite[Lemma 3.4]{BS22}). 
Let $(\hy, \hx, \hz_1, \hz_2)$ be a (straight) comparison configuration for $(y, x, z_1, z_2)$. 
In particular, $\Delta(\hy,\hx,\hz_2)$ is a comparison triangle for $\Delta(y,x,z_2)$.
Similar to the lower curvature bounds case, we obtain the following inequality for angles: 
\begin{equation}\label{eq:angle_to_straight_hinge}
\begin{split}
    \ma_x(\alpha,\gamma) & = \underbrace{\ma_x(\beta,\alpha)}_{=0} +\ma_x(\alpha,\gamma)\geq \ma_x(\beta,\gamma) \\ 
    & \geq \tilde{\ma}_x(y,z_2) = \ma_{\hx}(\hy,\hz_2) = \ma_{\hx}(\hz_1,\hz_2) \, , 
\end{split}
\end{equation}
where we used Lemma \ref{lem: triangle inequality special case}, Definition \ref{def-cb-ang}(iii) (with the sign of the angles already taken into account), and the triangle equality for angles in $\lm{K}$.

Let $(\tilde{x},\tilde{z}_1,\tilde{z}_2)$ form a comparison hinge for $(\alpha,\gamma)$ in $\lm{K}$. 
Then hinge comparison, cf.\ Definition \ref{def-cb-hinge} and Proposition \ref{prop:equivalence_hinge_angle} yield 
\begin{equation}
\label{eq:tau_in_angle_to_straight}
\tau(z_1,z_2) \leq \tau(\tilde{z}_1,\tilde{z}_2) \, .
\end{equation}
The comparison hinge $(\tilde{x},\tilde{z}_1,\tilde{z}_2)$ and the triangle $\Delta(\hx,\hz_1,\hz_2)$ have two sides of equal length, and $\ma_{\hat{x}}(\hat{z}_1,\hat{z}_2)\leq \ma_x(\alpha,\gamma)=\ma_{\tilde{x}}(\tilde{z}_1,\tilde{z}_2)$ by \eqref{eq:angle_to_straight_hinge}. 
Thus, law of cosines monotonicity (cf.\ \cite[Remark 2.5]{BS22}) implies $\tau(\tilde{z}_1,\tilde{z}_2) \leq \tau(\hat{z}_1,\hat{z}_2)$, which together with \eqref{eq:tau_in_angle_to_straight} gives the desired inequality $\tau(z_1,z_2) \leq \tau(\hat{z}_1,\hat{z}_2)$. 
The case of a past four-point configuration follows analogously. 
\end{proof}
\begin{pop}[Angle version of the timelike four-point condition for upper curvature bounds]
Let $U$ be an open subset in a Lorentzian pre-length space $X$ which satisfies Definition \ref{def-cb-4pt-cbb}(i). 
Let $(y, x, z_1, z_2)$ be a straight timelike and endpoint-causal four-point configuration in $U$ and let $(\hy, \hx, \hz_1, \hz_2)$ be a comparison configuration in $\lm{K}$. Then $\tau(z_1,z_2) \leq \tau(\hz_1,\hz_2)$ if and only if
\begin{equation}
\label{eq: angle version four point upper bounds}
\tilde{\ma}_x(z_1,z_2) \geq \tilde{\ma}_x(z_1,y) + \tilde{\ma}_x(y,z_2) \, .
\end{equation}
In addition, if $(z_2,z_1,x,y)$ is a timelike and endpoint-causal past four-point configuration and $(\hz_2, \hz_1, \hx, \hy)$ a comparison configuration, then $\tau(z_2,z_1) \leq \tau (\hz_2, \hz_1)$ if and only if \eqref{eq: angle version four point upper bounds} is satisfied.
\end{pop}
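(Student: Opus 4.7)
The statement is structurally the same as Lemma \ref{lem-angle version 4pt}, with the directions of both inequalities (between $\tau$-values and between angles) reversed. The proof of that lemma went entirely through the Hinge Lemma \cite[Lemma 2.2]{AB08}, which is a biconditional, so nothing in that argument is specific to the sign of the curvature bound and the same reasoning should apply here verbatim. My plan is therefore simply to rerun that proof.

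By the same symmetry used in Lemma \ref{lem-angle version 4pt}, only the future case requires attention. I would pass to a comparison triangle $\Delta(\bx,\bz_1,\bz_2)$ in $\lm{K}$ for the (possibly merely causal) triangle $\Delta(x,z_1,z_2)$, yielding $\tilde{\ma}_x(z_1,z_2) = \ma_{\bx}(\bz_1,\bz_2)$ by definition. The triples $(\hy,\hx,\hz_i)$ are comparison triangles for $(y,x,z_i)$ by construction, hence $\ma_{\hx}(\hy,\hz_i) = \tilde{\ma}_x(y,z_i) = \tilde{\ma}_x(z_i,y)$ for $i=1,2$. Because $\hz_1$ and $\hz_2$ sit on opposite sides of the line through $\hy$ and $\hx$ in the two-dimensional model space $\lm{K}$, the triangle equality of angles produces
\begin{equation*}
\ma_{\hx}(\hz_1,\hz_2) = \ma_{\hx}(\hy,\hz_1) + \ma_{\hx}(\hy,\hz_2) = \tilde{\ma}_x(z_1,y) + \tilde{\ma}_x(y,z_2).
\end{equation*}

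The hinges at $\bx$ and at $\hx$ share their adjacent side lengths, $\tau(\bx,\bz_i) = \tau(x,z_i) = \tau(\hx,\hz_i)$ for $i=1,2$. The Hinge Lemma then reads off the equivalence between $\tau(z_1,z_2) \leq \tau(\hz_1,\hz_2)$ and $\ma_{\bx}(\bz_1,\bz_2) \geq \ma_{\hx}(\hz_1,\hz_2)$, and substituting the identifications above yields exactly \eqref{eq: angle version four point upper bounds}. The only step worth flagging as potentially subtle is the direction of monotonicity in the Lorentzian Hinge Lemma: two future-pointing timelike segments of fixed lengths from a common apex produce a \emph{shorter} opposite timelike side when the opening angle is larger, cf.\ the law-of-cosines monotonicity recalled in \cite[Remark 2.5]{BS22}. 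This is precisely what flips the inequality relative to Lemma \ref{lem-angle version 4pt}. The straightness hypothesis does not actually enter the argument and is merely inherited from the scope of Definition \ref{def-cb-4pt-cba}.
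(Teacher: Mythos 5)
Your proposal is correct and follows exactly the route the paper takes: the paper's proof of this proposition simply declares it ``completely analogous'' to Lemma \ref{lem-angle version 4pt}, i.e.\ comparison triangle for $\Delta(x,z_1,z_2)$, the triangle equality of angles at $\hx$, equality of the adjacent side lengths, and the Hinge Lemma of \cite[Lemma 2.2]{AB08}, with the inequalities reversed by the Lorentzian law-of-cosines monotonicity you correctly identify (larger hyperbolic angle at the apex, shorter timelike opposite side). Your side remarks — that the biconditional nature of the Hinge Lemma is what makes the argument sign-independent, and that straightness is only inherited from Definition \ref{def-cb-4pt-cba} rather than used — are accurate.
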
 
\begin{proof}
The proof is completely analogous to the lower curvature bounds version, see Lemma \ref{lem-angle version 4pt}. 
\end{proof}
Note that in \eqref{eq: angle version four point upper bounds}, one of the angles on the right hand side is zero, depending on whether one deals with a left straight or a right straight configuration. 
\begin{pop}[Timelike vs.\ causal four-point condition]
\label{pop: timelike vs causal 4pt condition}
Let $X$ be a \LpLSn, and let $U \subseteq X$ be open, regular and $D_K$-geodesic. Then $U$ is a $(\geq K)$- (resp.\ $(\leq K)$-)comparison neighbourhood in the sense of the timelike four-point condition if and only if $U$ is a $(\geq K)$- (resp.\ $(\leq K)$-)comparison neighbourhood in the sense of the causal four-point condition. 

In particular, if $X$ is strongly causal, locally $D_K$-geodesic and regular, then it has curvature bounded below (resp.\ above) by $K$ in the sense of the timelike four-point condition if and only if it has the same bound in the sense of the causal four-point condition. 
\end{pop}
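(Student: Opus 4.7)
The direction from the causal to the timelike four-point condition is immediate, since every timelike endpoint-causal configuration is in particular a causal endpoint-causal configuration. For the reverse direction, the plan is to approximate a general causal endpoint-causal configuration by timelike ones and pass to the limit. Given a causal endpoint-causal configuration $(y,x,z_1,z_2)$ in $U$ (additionally straight in the CBA case), I would use $y\ll x$ together with $D_K$-geodesy of $U$ to obtain a distance realizer $\alpha\colon[0,a]\to U$ from $y$ to $x$; regularity forces $\alpha$ to be timelike, so $x_s\coloneqq\alpha(s)\ll x$ for each $s\in(0,a)$. Push-up applied to $x_s\ll x\leq z_i$ yields $x_s\ll z_i$, hence $(y,x_s,z_1,z_2)$ is a timelike endpoint-causal configuration (the relation $z_1\leq z_2$ is untouched), and the critical size bounds $\tau(y,z_i)<D_K$ are preserved. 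Applying the timelike four-point condition gives $\tau(z_1,z_2)\geq\tau(\hz_1^s,\hz_2^s)$ in the CBB case (resp.\ $\leq$ in the CBA case). Letting $s\to a$, continuity of $\tau$ on $(U\times U)\cap\tau^{-1}([0,D_K))$ ensures that all six pairwise $\tau$-values of the configuration converge, so the comparison configurations converge (up to isometry of $\lm{K}$) to a comparison configuration for $(y,x,z_1,z_2)$; in particular $\tau(\hz_1^s,\hz_2^s)\to\tau(\hz_1,\hz_2)$, yielding the desired inequality. The past-configuration case is handled symmetrically by perturbing along a realizer from $x$ to $y$.

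The main technical point, arising only in the CBA case, is to check that straightness is preserved under the perturbation. If the original configuration is left-straight, i.e.\ $\tau(y,z_1)=\tau(y,x)+\tau(x,z_1)$, then using $\tau(y,x)=\tau(y,x_s)+\tau(x_s,x)$ (since $x_s$ lies on the $y$-$x$ realizer) together with the reverse triangle inequality yields
\[
\tau(y,z_1)\geq\tau(y,x_s)+\tau(x_s,z_1)\geq\tau(y,x_s)+\tau(x_s,x)+\tau(x,z_1)=\tau(y,z_1),
\]
forcing equality throughout; hence $(y,x_s,z_1,z_2)$ is also left-straight. The right-straight case is identical. In this way the uniform approximation scheme sidesteps the degenerate case analysis of Remark \ref{rem: relevant constellations of four-point configs}.

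For the global ``in particular'' statement, the causal-to-timelike direction is again immediate. Conversely, fix $p\in X$ and pick a timelike four-point comparison neighbourhood $U$ containing $p$, a $D_K$-geodesic neighbourhood $W$ (by local $D_K$-geodesy), and, using strong causality, a causally convex open $V\subseteq U\cap W$ around $p$. Then $V$ inherits condition (i) and the timelike four-point inequality from $U$, is $D_K$-geodesic because any realizer in $W$ with endpoints in $V$ stays in $V$ by causal convexity, and is regular as an open subset of the regular space $X$ (realizers in $V$ are also realizers in $X$). Applying the first part of the proposition to $V$ shows it is a causal four-point comparison neighbourhood, and since such neighbourhoods cover $X$, the claim follows.
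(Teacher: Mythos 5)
Your proposal is correct and follows essentially the same route as the paper's proof: the non-trivial direction is obtained by sliding $x$ along the timelike realizer from $y$ to $x$ to produce timelike (and, in the upper-bound case, still straight) configurations, applying the timelike condition, and passing to the limit via continuity of $\tau$ and convergence of the comparison configurations, while the global statement uses causally convex neighbourhoods from strong causality. Your explicit verification that straightness is preserved (via additivity along the realizer and the reverse triangle inequality) merely fills in a step the paper asserts without proof.
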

\begin{proof}
The direction from causal to timelike is clear, as any (straight) timelike four-point configuration is also a (straight) causal four-point configuration. 

For the converse direction, let $(y, x, z_1, z_2)$ be a causal and endpoint-causal four-point configuration. 
Let $\alpha:[0,1]\to X$ be the timelike distance realizer from $y$ to $x$. 
Set $x^t \coloneqq \alpha(t)$, then for all $t<1$ the four-point configuration $(y, x^t, z_1, z_2)$ is timelike and endpoint-causal, and straight if $(y, x, z_1, z_2)$ was straight. 
Note that by continuity of $\tau$ we can choose the four-point comparison configuration $(\hy^t, \hx^t, \hz_1^t, \hz_2^t)$ of $(y,x^t,z_1,z_2)$ such that each of the points converges to the corresponding point in the four-point comparison situation $(\hy, \hx, \hz_1, \hz_2)$ of $(y, x, z_1, z_2)$. 
In particular, we have $\tau(\hz_1^t,\hz_2^t)\to\tau(\hz_1,\hz_2)$, and $\tau(z_1,z_2)$ remains independent of $t$. 
For lower curvature bounds we know $\tau(z_1,z_2)\geq \tau(\hz_1^t,\hz_2^t)$, thus we also have $\tau(z_1,z_2)\geq \tau(\hz_1,\hz_2)$. 
In the case of upper curvature bounds, we get corresponding inequalities in the other direction. 
The case of a past four-point configuration follows analogously.

Finally, note that the additional assumptions in the second part of the claim are required since comparison neighbourhoods in the sense of any four-point condition need not be $D_K$-geodesic. Concerning the non-trivial direction, let $x \in X$ and suppose that $U$ is a comparison neighbourhood of $x$ in the sense of the timelike four-point condition. Then we find a neighbourhood $V$ of $x$ which is $D_K$-geodesic. Any intersection of timelike diamonds inside $U \cap V$ is, due to causal convexity, easily seen to be a regular and $D_K$-geodesic comparison neighbourhood, hence the first statement of the proposition applies. 
\end{proof}

Next, we show that curvature bounds in the sense of the causal four-point condition imply curvature bounds in the sense of monotonicity comparison. 
\begin{pop}[Causal four-point condition implies monotonicity comparison]
\label{pop: four point condition implies monotonicity comparison} 
Let $X$ be a \LpLS and let $U \subseteq X$ be open, regular and $D_K$-geodesic. 
If $U$ is a $(\geq K)$- (resp.\ $(\leq K)$-)comparison neighbourhood in the sense of the causal four-point condition, then $U$ is a $(\geq K)$- (resp.\ $(\leq K)$-)comparison neighbourhood in the sense of monotonicity comparison. 

In particular, if $X$ is strongly causal, regular and locally $D_K$-geodesic, and $X$ has curvature bounded below (resp.\ above) by $K$ in the sense of the causal four-point condition, then it also has the same bound in the sense of monotonicity comparison. 
\end{pop}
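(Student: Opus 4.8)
The plan is to reduce to equivalences already established. I will show that $U$ is a comparison neighbourhood in the sense of one-sided timelike triangle comparison (Definition~\ref{def-cb-tr-onesided}); by Proposition~\ref{rem: one-sided triangle comparison and triangle comparison are equivalent} it is then one in the sense of timelike triangle comparison, and by Proposition~\ref{pop-eqiv-mon-tr} one in the sense of monotonicity comparison. Conditions (i) and (ii) of Definition~\ref{def-cb-mon} are immediate: (i) is literally condition (i) of the causal four-point condition, and (ii) ($D_K$-geodesic) is among the hypotheses on $U$. Also, the causal four-point condition trivially entails the timelike four-point condition (fewer configurations to check), so it suffices to use the latter.

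For the inequalities of Definition~\ref{def-cb-tr-onesided}(iii), let $\Delta(x,y,z)$ be a timelike triangle in $U$, let $p$ lie on one of its sides, and let $v$ be the opposite vertex; by time-duality we may assume $p$ lies on a side issuing from the earliest vertex $x$. If $p$ is a vertex, or if $p$ and $v$ are not timelike related, the required inequalities hold trivially in the $(\geq K)$-case, and in the $(\leq K)$-case the only point to check is that then $\bp$ and $\bar{v}$ are not causally related, which follows from the reverse triangle inequality together with the causal-relation content of the causal four-point condition, in the spirit of Theorem~\ref{thm:equivCausTl}(ii). In the remaining (main) case $p$ and $v$ are timelike related. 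Because $x$, $p$ and the third vertex $w$ (or, depending on which side carries $p$, $x$, $p$ and $v$) lie on the common distance realizer given by the side of $\Delta(x,y,z)$ through $p$, the four-point configuration with common vertex $p$, lower point $x$ and upper points $y,z$ is timelike, endpoint-causal, and straight; hence the timelike four-point condition applies and yields, between the far vertices, $\tau(y,z)\geq\tau(\hy,\hz)$ (resp.\ $\leq$) in a comparison configuration $(\hx,\hp,\hy,\hz)$ of this configuration.

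It remains to pass from this to $\tau(p,v)\leq\tau(\bp,\bar{v})$ (resp.\ $\geq$); the companion inequality $\tau(v,p)\leq\tau(\bar{v},\bp)$ (resp.\ $\geq$) is trivial by chronology, with the $(\leq K)$-causality subtlety handled as above. Straightness means $\hx$, $\hp$ and one of $\hy,\hz$ are collinear, so $\hx,\hy,\hz$ span a genuine geodesic triangle in $\lm{K}$ sharing the side lengths $\tau(x,y)$ and $\tau(x,z)$ with the comparison triangle $\Delta(\bx,\by,\bz)$ of $\Delta(x,y,z)$, while their third sides compare by the inequality just obtained. Law of cosines monotonicity (\cite[Remark 2.5]{BS22}), or the Hinge Lemma \cite[Lemma 2.2]{AB08} when a side is spacelike, then compares the angles of these triangles at $\hx$ and $\bx$; since $\hp$ and $\bp$ sit at the same $\tau$-distance from $\hx$, resp.\ $\bx$, along the collinear side, a second application to the subtriangles cut off towards the vertex corresponding to $v$ gives $\tau(p,v)=\tau(\hp,\hat{v})\leq\tau(\bp,\bar{v})$ (resp.\ $\geq$). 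The dual cases follow from the past four-point condition. This proves one-sided timelike triangle comparison on $U$, hence monotonicity comparison. For the in-particular clause, around any point of $X$ one intersects a causal four-point comparison neighbourhood with a $D_K$-geodesic neighbourhood and with finitely many timelike diamonds; by causal convexity this is again a regular, $D_K$-geodesic comparison neighbourhood in the sense of the causal four-point condition, so the first part applies --- exactly the shrinking argument used at the end of the proof of Proposition~\ref{pop: timelike vs causal 4pt condition}.

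The step I expect to be most delicate is the bookkeeping for the one-sided triangle comparison cases, above all when $p$ lies on the long side $[x,z]$ and is spacelike or null to the middle vertex: for upper curvature bounds one must then exclude a causal relation between $\bp$ and $\bar{v}$, which sends one back to the causal-relation part of the (strict) causal four-point condition, as in Theorem~\ref{thm:equivCausTl}(ii). One must also keep in mind that the direction of the four-point inequality and that of the triangle-comparison inequality are opposite within each case, the flip being produced by the two law-of-cosines-monotonicity steps; this is routine but easy to get wrong.
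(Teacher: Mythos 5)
Your proposal is correct in substance but takes a genuinely different route from the paper. The paper proves the implication directly: given a hinge, two points on the same leg are automatically aligned, so together with a point on the other leg they form straight (future or past) four-point configurations, and two applications of law of cosines monotonicity convert the four-point inequality into monotonicity of the signed comparison angle $\theta$, via the four subcases of Figure~\ref{fig: 4pt-->mon}. You instead observe that a point $p$ on a side of a timelike triangle, together with the three vertices, is itself a straight four-point configuration whose comparison configuration contains a triangle with two sides matching $\Delta(\bx,\by,\bz)$ and with $\hp$ sitting as the comparison point of $p$; the same two law-of-cosines steps then yield one-sided timelike triangle comparison, and you conclude via Propositions~\ref{rem: one-sided triangle comparison and triangle comparison are equivalent} and \ref{pop-eqiv-mon-tr}. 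Since those equivalences are established independently of any four-point material, there is no circularity; your route buys a leaner case analysis at the cost of channelling the argument through the triangle--monotonicity equivalence, which is stated for regular $X$ whereas the present proposition only assumes $U$ regular (a small hypothesis mismatch the paper's direct proof avoids), and of having to treat the non-timelike-related constellations that the paper's hinge-based argument never meets.

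That last point is the one place where your justification, as written, does not stand: for $(\leq K)$ and $p\in[x,z]$ not timelike related to $y$ you invoke ``the causal-relation content of the causal four-point condition'', but the non-strict causal four-point condition carries no causal-relation implications --- only the strict version does, and you are not entitled to it here. The claim itself is true and can be patched without causal closedness: slide $p$ along $[x,z]$, use continuity of $\tau$ and your already-established timelike case to force $\tau(\bar{\gamma}(s),\by)=0$ at the last parameter with $\gamma(s)\ll y$, and then chronology in $\lm{K}$ along the timelike side $[\bx,\bz]$ propagates this to $\bp$ (dually for $\tau(\by,\bp)$), in the spirit of, but not literally by, Theorem~\ref{thm:equivCausTl}(ii). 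Similarly, the companion inequality $\tau(v,p)\geq\tau(\bar{v},\bp)$ when $p\ll v$ is not ``trivial by chronology'' of $X$; it is a model-space fact that for a hinge in $\lm{K}$ the causal direction between the endpoints is dictated by the two side lengths, not by the angle, so $\bar{v}\ll\bp$ cannot occur. With these two justifications repaired, your argument goes through.
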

\begin{proof}
We only demonstrate the case of lower curvature bounds, the upper curvature bounds case is entirely analogous. 
Let $U$ be as in the statement. 
The first two conditions in Definition \ref{def-cb-mon} are satisfied by assumption.  
For the third condition, let $\alpha:[0,a]\to X$, $\beta:[0,b]\to X$ be a hinge with $\alpha(0)=\beta(0)$. 

There are two cases to consider, one where the two curves have the same time-orientation (say both future-directed), and one where they have different time-orientation (say $\alpha$ is future-directed and $\beta$ is past-directed). 

First, we consider the case of $\alpha$ and $\beta$ being future-directed. We need to show that the partial function $\theta(s,t)=\tilde{\ma}_y^{K,\mathrm{S}}(\alpha(s),\beta(t))$ is monotonically increasing. By Remark \ref{rem-cb-one-sided-mon}, it suffices to establish one-sided monotonicity. 
The future-directed case technically breaks down into three subcases, depending on the relations between the points on the curves (see Figure \ref{fig: 4pt-->mon} for a rough sketch of the in total four subcases). 
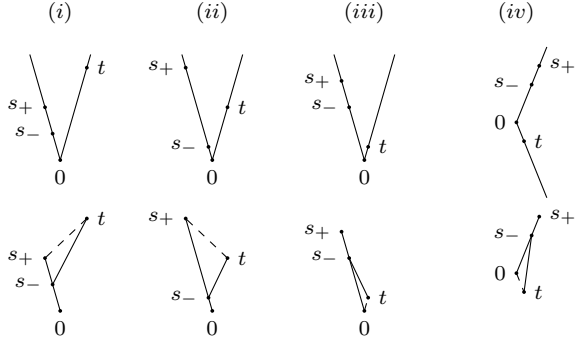
\begin{figure}
\begin{center}
\begin{tikzpicture}
\draw (-0.4,1.4) -- (0,0) -- (0.4,1.4);
\draw (-0.4+2,1.4) -- (0+2,0) -- (0.4+2,1.4);
\draw (-0.4+4,1.4) -- (0+4,0) -- (0.4+4,1.4);
\draw (0.4+6,-1+0.5) -- (0+6,0+0.5) -- (0.4+6,1+0.5);

\draw (0,0-2) -- (-0.2,1.4*2/4-2) -- (-0.2,1.4*2/4-2); 
\draw (-0.1,1.4*1/4-2) -- (0.35,1.4*3.5/4-2);
\draw[dashed] (0.35,1.4*3.5/4-2) -- (-0.2,1.4*2/4-2);

\draw (0+2,0-2) -- (-0.1+2,1.4*1/4-2) -- (-0.35+2,1.4*3.5/4-2);
\draw (-0.05+2,1.4*0.5/4-2) -- (0.2+2,1.4*2/4-2);
\draw[dashed] (0.2+2,1.4*2/4-2) -- (-0.35+2,1.4*3.5/4-2);

\draw (0+4,0-2) -- (-0.3+4,1.4*3/4-2); 
\draw (-0.2+4,1.4*2/4-2) -- (0.05+4,1.4*0.5/4-2);
\draw[dashed] (0+4,0-2) -- (0.05+4,1.4*0.5/4-2);

\draw (0+6,0+0.5-2) -- (0.3+6,1*3/4+0.5-2);
\draw (0.2+6,1.*2/4+0.5-2) -- (0.1+6,-1*1/4+0.5-2);
\draw[dashed] (0+6,0+0.5-2) -- (0.1+6,-1*1/4+0.5-2);

\begin{scriptsize}
\coordinate [circle, fill=black, inner sep=0.5pt, label=270: {$0$}] (0.1) at (0,0); 
\coordinate [circle, fill=black, inner sep=0.5pt, label=180: {$s_+$}] (s+.1) at (-0.2,1.4*2/4);
\coordinate [circle, fill=black, inner sep=0.5pt, label=180: {$s_-$}] (s-.1) at (-0.1,1.4*1/4);
\coordinate [circle, fill=black, inner sep=0.5pt, label=0: {$t$}] (t.1) at (0.35,1.4*3.5/4);

\coordinate [circle, fill=black, inner sep=0.5pt, label=270: {$0$}] (0.1) at (0,0-2); 
\coordinate [circle, fill=black, inner sep=0.5pt, label=180: {$s_+$}] (s+.1) at (-0.2,1.4*2/4-2);
\coordinate [circle, fill=black, inner sep=0.5pt, label=180: {$s_-$}] (s-.1) at (-0.1,1.4*1/4-2);
\coordinate [circle, fill=black, inner sep=0.5pt, label=0: {$t$}] (t.1) at (0.35,1.4*3.5/4-2);

\coordinate [circle, fill=black, inner sep=0.5pt, label=270: {$0$}] (0.2) at (0+2,0); 
\coordinate [circle, fill=black, inner sep=0.5pt, label=180: {$s_+$}] (s+.1) at (-0.35+2,1.4*3.5/4);
\coordinate [circle, fill=black, inner sep=0.5pt, label=180: {$s_-$}] (s-.2) at (-0.05+2,1.4*0.5/4);
\coordinate [circle, fill=black, inner sep=0.5pt, label=0: {$t$}] (t.2) at (0.2+2,1.4*2/4);

\coordinate [circle, fill=black, inner sep=0.5pt, label=270: {$0$}] (0.2) at (0+2,0-2); 
\coordinate [circle, fill=black, inner sep=0.5pt, label=180: {$s_+$}] (s+.1) at (-0.35+2,1.4*3.5/4-2);
\coordinate [circle, fill=black, inner sep=0.5pt, label=180: {$s_-$}] (s-.2) at (-0.05+2,1.4*0.5/4-2);
\coordinate [circle, fill=black, inner sep=0.5pt, label=0: {$t$}] (t.2) at (0.2+2,1.4*2/4-2);

\coordinate [circle, fill=black, inner sep=0.5pt, label=270: {$0$}] (0.3) at (0+4,0); 
\coordinate [circle, fill=black, inner sep=0.5pt, label=180: {$s_+$}] (s+.3) at (-0.3+4,1.4*3/4);
\coordinate [circle, fill=black, inner sep=0.5pt, label=180: {$s_-$}] (s-.3) at (-0.2+4,1.4*2/4);
\coordinate [circle, fill=black, inner sep=0.5pt, label=0: {$t$}] (t.3) at (0.05+4,1.4*0.5/4); 

\coordinate [circle, fill=black, inner sep=0.5pt, label=270: {$0$}] (0.3) at (0+4,0-2); 
\coordinate [circle, fill=black, inner sep=0.5pt, label=180: {$s_+$}] (s+.3) at (-0.3+4,1.4*3/4-2);
\coordinate [circle, fill=black, inner sep=0.5pt, label=180: {$s_-$}] (s-.3) at (-0.2+4,1.4*2/4-2);
\coordinate [circle, fill=black, inner sep=0.5pt, label=0: {$t$}] (t.3) at (0.05+4,1.4*0.5/4-2);

\coordinate [circle, fill=black, inner sep=0.5pt, label=180: {$0$}] (0.4) at (0+6,0+0.5); 
\coordinate [circle, fill=black, inner sep=0.5pt, label=0: {$s_+$}] (s+.4) at (0.3+6,1*3/4+0.5);
\coordinate [circle, fill=black, inner sep=0.5pt, label=180: {$s_-$}] (s-.4) at (0.2+6,1.*2/4+0.5);
\coordinate [circle, fill=black, inner sep=0.5pt, label=0: {$t$}] (t.4) at (0.1+6,-1*1/4+0.5);

\coordinate [circle, fill=black, inner sep=0.5pt, label=180: {$0$}] (0.4) at (0+6,0+0.5-2); 
\coordinate [circle, fill=black, inner sep=0.5pt, label=0: {$s_+$}] (s+.4) at (0.3+6,1*3/4+0.5-2);
\coordinate [circle, fill=black, inner sep=0.5pt, label=180: {$s_-$}] (s-.4) at (0.2+6,1.*2/4+0.5-2);
\coordinate [circle, fill=black, inner sep=0.5pt, label=0: {$t$}] (t.4) at (0.1+6,-1*1/4+0.5-2); 

\coordinate [label=270: {$(i)$}] (l1) at (0+0,2.2);
\coordinate [label=270: {$(ii)$}] (l2) at (0+2,2.2);
\coordinate [label=270: {$(iii)$}] (l3) at (0+4,2.2);
\coordinate [label=270: {$(iv)$}] (l4) at (0+6,2.2);
\end{scriptsize}
\end{tikzpicture}
\caption{The four possible subcases of endpoint-causal straight four-point configurations which arise from a hinge. 
}
\label{fig: 4pt-->mon}
\end{center}
\end{figure}
Let $s_+ > s_- >0$ and $t>0$ be such that $\alpha(s_+) \leq \beta(t)$ (the case of $\alpha(s_-) \leq \beta(t) \leq \alpha(s_+)$ follows analogously). 
These correspond to the cases (i) and (ii) in Figure \ref{fig: 4pt-->mon}, respectively. Note that in case (ii) we might deal with a causal four-point configuration if $\alpha(s_-) \leq \beta(t)$ are null related. 
Set $y:=\alpha(0)=\beta(0), x:= \alpha(s_-), z_1:=\alpha(s_+)$ and $z_2:=\beta(t)$. 
We need to show that $\theta(s_-,t) \leq \theta(s_+,t)$, i.e., $\tilde{\ma}_{y}(x,z_2) \geq \tilde{\ma}_{y}(z_1,z_2)$ (recall that $\theta$ is defined using signed angles). 
By construction, $(y, x, z_1, z_2)$ forms a left straight timelike and endpoint-causal four-point configuration. 
Let $(\hy, \hx, \hz_1, \hz_2)$ be a comparison configuration, then $\Delta(\hy,\hx,\hz_2)$ is a comparison triangle for $\Delta(y,x,z_2)$. 
Let $\Delta(\by,\bz_1,\bz_2)$ be a comparison triangle for (the possibly merely admissible causal triangle) $\Delta(y,z_1,z_2)$ and let $\bx$ be the comparison point for $x$ in $\Delta(\by,\bz_1,\bz_2)$. 
We have $\tilde{\ma}_{y}(x,z_2)=\ma_{\hy}(\hx, \hz_2)=\ma_{\hy}(\hz_1,\hz_2)$ and  $\tilde{\ma}_{y}(z_1,z_2)=\ma_{\by}(\bz_1,\bz_2)$, so the desired inequality reads $\ma_{\hy}(\hz_1,\hz_2) \geq \ma_{\by}(\bz_1,\bz_2)$. 
The two triangles $\Delta(\by,\bz_1,\bz_2)$ and $\Delta(\hy,\hz_1,\hz_2)$ have two sides of equal length, and by four-point comparison we know $\tau(\bz_1,\bz_2) = \tau(z_1,z_2) \geq \tau(\hz_1,\hz_2)$. 
Thus, $\tilde{\ma}_y(x,z_2)= \ma_{\hy}(\hz_1,\hz_2) \geq \ma_{\by}(\bz_1,\bz_2)=\tilde{\ma}_y(z_1,z_2)$ follows by law of cosines monotonocity, cf.\ \cite[Remark 2.5]{BS22}). 

For the remaining subcase of the future-directed case, let $s_-,s_+$ and $t$ be such that $\beta(t) \leq \alpha(s_-) \ll \alpha(s_+)$, see (iii) in Figure \ref{fig: 4pt-->mon} (note that also here one might deal with a causal four-point configuration if $\beta(t) \leq \alpha(s_-)$ are null related. 
Set $z_2 \coloneqq \alpha(0)=\beta(0), z_1 \coloneqq \beta(t), x \coloneqq \alpha(s_-)$ and $y \coloneqq \alpha(s_+)$ and consider the resulting left straight timelike and endpoint-causal past four-point configuration $(z_2, z_1, x, y)$. 

Construct a comparison configuration $(\hz_2, \hz_1, \hx, \hy)$ as well as a comparison triangle $\Delta(\bz_2, \bz_1, \by)$ for the triangle $\Delta(z_2, z_1, y)$. 
The triangles $\Delta(\bz_2, \bz_1, \by)$ and $\Delta(\hz_2, \hz_1, \hy)$ have two sides of equal length, and by four-point comparison we know $\tau(z_2,z_1)=\tau(\bz_2, \bz_1) \geq \tau(\hz_2, \hz_1)$. 
Thus, by law of cosines, we obtain $\ma_{\by}(\bz_2, \bz_1) \leq \ma_{\hy}(\hz_2, \hz_1)$. 
Let $\bx$ be a a comparison point for $x$ in $\Delta(\bz_2, \bz_1, \by)$ and consider the subtriangles $\Delta(\bz_1, \bx, \by)$ and $\Delta(\hz_1, \hx, \hy)$ of $\Delta(\bz_2, \bz_1, y)$ and $\Delta(\hz_2, \hz_1, \hy)$, respectively. 
They have two sides of equal length, and the angles at $\by$ (resp. $\hy$) agree with the ones in the original triangles, i.e., $\ma_{\by}(\bx, \bz_1) = \ma_{\by}(\bz_2, \bz_1) \leq \ma_{\hy}(\hz_2, \hz_1) = \ma_{\hy}(\hx, \hz_1)$. 
Thus, we get $\tau(\bx, \bz_1) \geq \tau(\hx, \hz_1) = \tau(x, z_1)$. 
Finally, we can relate a comparison triangle $\Delta(\bz_2', \bz_1', \bx')$ for $\Delta(z_2, z_1, x)$ to the subtriangle $\Delta(\bz_2, \bz_1, \bx)$ of $\Delta(\bz_2, \bz_1, \by)$. 
They have two sides of equal length, and from the above arguments we know $\tau(\bx, \bz_1) \leq \tau(x, z_1) = \tau(\bx', \bz_1')$. 
Hence the desired inequality $\ma_{\bz_2}(\bx, \bz_1) \geq \ma_{\bz_2'}(\by', \bz_1')$ follows. 

At last, consider the case of $\alpha$ being future-directed and $\beta$ being past-directed, see (iv) in Figure \ref{fig: 4pt-->mon}. 
In this case, for any choice of parameters, $\alpha(s_-), \alpha(s_+)$ and $\beta(t)$ (together with the origin) yield a straight timelike and endpoint-causal past four-point configuration. 
Labeling the points as in the case (iii) depicted in Figure \ref{fig: 4pt-->mon}, we observe that we are actually in the same situation, with the only difference being that the causal relation between $z_1$ and $z_2$ is reversed. 
Regardless, the arguments are completely analogous. 

The second part of the statement follows just as in Proposition \ref{pop: timelike vs causal 4pt condition}. 
\end{proof}

\begin{pop}[Causal vs.\ strict causal four-point condition]
\label{prop:4-point_causal_noncausal_equivalent}
Let $X$ be a \LpLS and let $U\subseteq X$ be open, $D_K$-geodesic, and regular. 
In the case of lower curvature bounds, assume in addition that $U$ is causally closed. 
Then $U$ is a $(\geq K)$- (resp.\ $(\leq K)$-)comparison neighbourhood in the sense of the causal four-point condition if and only if it is a $(\geq K)$- (resp.\ $(\leq K)$-)comparison neighbourhood in the sense of the strict causal four-point condition. 

In particular, if $X$ is strongly causal, regular and locally $D_K$-geodesic, then $X$ has curvature bounded below (resp.\ above) by $K$ in the sense of the causal four-point condition, if and only if it has the same bound in the sense of the strict causal four-point condition. 
\end{pop}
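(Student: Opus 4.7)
The direction from strict causal to causal four-point is immediate by specialization to endpoint-causal configurations. For the converse, I plan a perturbation-plus-limit argument along a distance realizer, handling lower and upper curvature bounds separately. First, by Remark \ref{rem: relevant constellations of four-point configs} (which uses regularity in the lower-bound case), I reduce to configurations $(y, x, z_1, z_2)$ with all four points distinct and $x \ll z_1, z_2$; the remaining cases are either trivial or treated there. The symmetric case of past four-point configurations is handled analogously.

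For upper curvature bounds, given a straight endpoint-causal configuration with $z_1 \leq z_2$, the goal is $\hz_1 \leq \hz_2$. If $z_1 \ll z_2$, the causal four-point $\tau$-inequality gives $\tau(\hz_1, \hz_2) \geq \tau(z_1, z_2) > 0$, so $\hz_1 \ll \hz_2$. If the relation is null, I let $\alpha\colon [0,1] \to U$ be the distance realizer from $x$ to $z_1$ and set $z_1^s := \alpha(s)$: for $s \in [0,1)$ the perturbed configuration $(y, x, z_1^s, z_2)$ remains straight (as $z_1^s$ lies on the same distance realizer through $z_1$) and is endpoint-causal with $z_1^s \ll z_1 \leq z_2$, so the causal four-point condition yields $\hz_1^s \ll \hz_2$; passing to the limit $s \to 1$ via continuity of the comparison configuration and closure of $\leq$ in $\lm{K}$ gives $\hz_1 \leq \hz_2$.

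For lower curvature bounds, the new content is $\hz_1 \leq \hz_2 \Rightarrow z_1 \leq z_2$ (the $\tau$-inequality for non-endpoint-causal configurations is then a contrapositive, since non-endpoint-causality of the original forces $\tau(z_1,z_2)=0$, hence must force $\tau(\hz_1,\hz_2)=0$). Assume $\hz_1 \leq \hz_2$, apply the perturbation $z_1^s = \alpha(s)$, and consider $S := \{s \in [0,1] : z_1^s \leq z_2\}$: $S$ is nonempty (since $z_1^s \ll z_2$ near $s = 0$ by continuity of $\tau$ and $x \ll z_2$) and closed by causal closure of $U$, so $s^* := \sup S \in S$. If $s^* = 1$ we are done; otherwise continuity of $\tau$ forces $z_1^{s^*}$ to be only null-related to $z_2$ (else $S$ would contain a right-neighborhood of $s^*$), so the causal four-point condition applied at $s^*$ yields $\hz_1^{s^*} \not\ll \hz_2$. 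The hard part is to derive a contradiction with $\hz_1 \leq \hz_2$: my plan is to track the continuous path $s \mapsto \hz_1^s$ from $s^*$ to $1$ and, in the case $\hz_1 \ll \hz_2$, use openness of $\ll$ to find some $s \in (s^*, 1)$ with $\hz_1^s \ll \hz_2$, then --- via the strict causal triangle comparison obtained from the causal four-point condition through Propositions \ref{pop: four point condition implies monotonicity comparison} and \ref{pop-eqiv-mon-tr} together with Theorem \ref{thm:equivCausTl} --- translate this into $z_1^s \ll z_2$, contradicting $s > s^*$; the null boundary case $\hz_1 \leq \hz_2$ null-related is handled by an additional limit, approximating $\hz_2$ by interior points in its chronological past.

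The ``in particular'' statement on $X$ then follows as in Proposition \ref{pop: timelike vs causal 4pt condition}: strong causality and local $D_K$-geodesicity allow me to shrink any causal four-point comparison neighborhood around a point $x \in X$ to a causally convex neighborhood $V$ which is automatically $D_K$-geodesic and regular, and can be arranged (via Lemma \ref{lem:causally_closed_comp_nbhds} or an analogous construction exploiting $D_K$-geodesicity of $V$ to extract limit distance realizers between limit points) to be causally closed in the lower-bound case; the local statement then applies to $V$ and yields the strict causal four-point condition near $x$.
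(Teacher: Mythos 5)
Your easy direction, your upper-bound perturbation-and-limit scheme, and your observation that for lower bounds the $\tau$-inequality for non-endpoint-causal configurations follows as a contrapositive of the causality implication are all fine, but the lower-bound ``hard part'' --- exactly where the content of the proposition lies --- does not go through as planned. You want to deduce $z_1^s\ll z_2$ from $\hz_1^s\ll\hz_2$ via strict causal triangle comparison. First, for $(\geq K)$ the causality implication in (strict) causal triangle comparison points the other way: one has $p\leq q\Rightarrow\bar p\leq\bar q$ (cf.\ Definition \ref{def-cb-tr} and the definition of strict causal comparison); the converse direction is the \emph{upper} curvature bound statement, so the implication you need is simply unavailable. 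Second, and more fundamentally, $\hz_1^s$ and $\hz_2$ are not comparison points inside a single comparison triangle of a triangle in $X$: they live in a four-point comparison configuration, i.e.\ two comparison triangles glued along $[\hy,\hx]$ and unfolded to opposite sides, and relating $\tau(\hz_1^s,\hz_2)$ or the causal relation across that gluing to the situation in $X$ is precisely what the four-point condition governs --- it cannot be delegated back to triangle comparison. The paper's proof instead establishes $f(t)=\tau(z_1^t,z_2)\geq g(t)=\tau(\hz_1^t,\hz_2)$ for \emph{all} $t$ (not only endpoint-causal ones), by proving strict monotonicity of $g$ where $\hz_1^t\leq\hz_2$ (via Proposition \ref{pop: four point condition implies monotonicity comparison}, the triangle equality of angles in $\lm{K}$, and law of cosines monotonicity) plus an intermediate-value argument, and only afterwards reads off the causality implication from $f\geq g$ together with causal closedness. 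Your plan inverts this order but has no substitute for that monotonicity machinery.

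A second gap: Remark \ref{rem: relevant constellations of four-point configs} only allows you to assume $x\ll z_2$; it does \emph{not} yield $x\ll z_1$. Since your sole perturbation device is a timelike distance realizer from $x$ to $z_1$, the case where $x\leq z_1$ is null is left untreated, and there such a realizer need not exist ($U$ is only $D_K$-geodesic, not causally geodesic). The paper needs separate arguments precisely there: for $(\leq K)$ it splits into left and right straight cases, obtaining $x\ll z_1$ from regularity in the former and, in the latter, moving $x$ and $z_1$ simultaneously along realizers emanating from $y$; for $(\geq K)$ there is a dedicated final case replacing the strict-decrease argument for $g$. Finally, a minor point on the global statement: Lemma \ref{lem:causally_closed_comp_nbhds} explicitly excludes the four-point condition and requires local weak causal closedness, which is not among the hypotheses, so it cannot be invoked as you suggest; the paper reduces to the argument of the preceding proposition on timelike vs.\ causal four-point conditions instead.
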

\begin{proof}
In both cases of implications, one implication is obvious from the definitions, so we only need to show that the causal four-point condition implies the strict causal four-point condition. 
For $(\leq K)$-comparison neighbourhoods, let first $(y, x, z_1, z_2)$ be a left straight causal and endpoint-causal four-point configuration. 
Then $x\ll z_1$ since $y \ll x$ and $U$ is regular. 
Let $\alpha:[0,1]\to X$ be the timelike distance realizer connecting $x$ to $z_1$, and set $z_1^t \coloneqq \alpha(t)$, then $z_1^t\ll z_2$ for all $t<1$. 
Consider the straight timelike four-point configuration $(y, x, z_1^t, z_2)$ and a comparison configuration $(\hat{y}, \hat{x}, \hat{z}_1^t, \hat{z}_2)$. 
Then we have $0<\tau(z_1^t,z_2)\leq\tau(\hat{z}_1^t,\hat{z}_2)$. 
Note that $(\hat{y}, \hat{x}, \hat{z}_1^t, \hat{z}_2)$ can be chosen so that it converges to a comparison configuration for $(y, x, z_1, z_2)$ as $t\nearrow 1$, i.e., $\hz_1^t \to \hz_1$. Thus, in the limit we get $\hat{z}_1 \leq \hat{z}_2$, as required. 

If $(y, x, z_1, z_2)$ is instead a right straight causal and endpoint-causal four-point configuration, let $\beta$ be the timelike distance realizer connecting $y$ to $z_1$ and $\gamma$ be the timelike distance realizer connecting $y$ to $x$. 
Then for all $t<1$ there is an $s<1$ such that $\gamma(t) \ll \beta(s)$, and we can make this choice $s_t$ continuously and such that $\lim_{t\nearrow 1} s_t = 1$. 
Set $x^t=\gamma(t)$ and $z_1^t=\beta(s_t)$. 
Consider the right straight timelike and endpoint-causal four-point configuration $(y, x^t, z_1^t, z_2)$, then we have $\tau(z_1^t,z_2) \geq \tau(z_1^t,z_1) > 0$. 
For a comparison configuration $(\hat{y}, \hat{x}^t, \hat{z}_1^t, \hat{z}_2)$, we have $0 < \tau(z_1^t,z_2) \leq \tau(\hat{z}_1^t,\hat{z}_2)$ by the causal four-point condition. 
In particular, $\hat{z}_1^t \leq \hat{z}_2$ for all $t \in [0,1)$. 
Note that $(\hat{y}, \hat{x}^t, \hat{z}_1^t, \hat{z}_2)$ can be chosen so that it converges to a comparison configuration for $(y, x, z_1, z_2)$ as $t\nearrow 1$, so in the limit we get $\hat{z}_1 \leq \hat{z}_2$, as required. 
\medskip

Now we have to look at whether $(\geq K)$-comparison neighbourhoods in the sense of the causal four-point condition are also such in the sense of the strict causal four-point condition. 
Let $(y, x, z_1, z_2)$ be a causal four-point configuration which is not necessarily endpoint-causal). 
For now, consider the case where $x\ll z_1$ is timelike. 
First, we look at the inequality between the $\tau$'s. 
Let $\alpha$ be the timelike distance realizer from $x$ to $z_1$, set $z_1^t \coloneqq \alpha(t)$ and consider the causal four-point configuration $(y, x, z_1^t, z_2)$. 
This yields a four-point comparison configuration $(\hat{y}, \hat{x}, \hat{z}_1^t, \hat{z_2})$ (note that this can be chosen in such a way that only $\hat{z}_1^t$ depends on $t$). 

By Proposition \ref{pop: four point condition implies monotonicity comparison}, we know that $U$ is also a comparison neighbourhood in the sense of monotonicity comparison. Thus, we have that $\tilde{\ma}_x(y,\alpha(t))=\ma_{\hat{x}}(\hat{y},\hat{z}_1^t)$ is increasing in $t$. 
By the triangle equality of angles in $\lm{K}$, we know $\ma_{\hat{x}}(\hat{z}_1^t,\hat{z}_2)=\ma_{\hat{x}}(\hat{y},\hat{z}_1^t)+\ma_{\hat{x}}(\hat{y},\hat{z}_2)$, hence also $\ma_{\hat{x}}(\hat{z}_1^t,\hat{z}_2)$ is increasing in $t$. 
Now we claim that $\tau(\hat{z}_1^t,\hat{z}_2)$ is strictly monotonically  decreasing in $t$ whenever $\hat{z}_1^t\leq \hat{z}_2$. 
Let $s<t$. 
Let $\hat{z}_1^{st}$ be the point on the side $[\hat{x}, \hat{z}_1^t]$ such that $\tau(\hx, \hz_1^{st}) = \tau(x,\alpha(s))$. Note that $\hat{z}_1^{st}$ is a comparison point on the side of a triangle, while $\hat{z}_1^s$ is a vertex of a comparison triangle. Then we observe: 
\begin{itemize}
\item $\tau(\hat{z}_1^{st},\hat{z}_2)>\tau(\hat{z}_1^{t},\hat{z}_2)$ by reverse triangle inequality ($\hat{z}_1^{st}\ll\hat{z}_1^{t}$),
\item $\tau(\hat{z}_1^{s},\hat{z}_2) > \tau(\hat{z}_1^{st},\hat{z}_2)$ by law of cosines monotonicity: These can each be completed to a triangle with $\hat{x}$. The other side lengths corresponding to each other agree and we know an inequality between the angles at $\hat{x}$. 
\end{itemize} 
In particular, we can look at the functions $f(t)=\tau(z_1^t,z_2)$ and $g(t)=\tau(\hat{z}_1^t,\hat{z}_2)$. 
We have just proven that $g$ is strictly monotonically decreasing whenever $\hz_1^t \leq \hz_2$. The reverse triangle inequality proves that $f$ is as well whenever $z_1^t \leq z_2$.  By Remark \ref{rem:endpoint-causality in strict 4pt}, both causal relations are certainly satisfied for small $t$. 
Ultimately, we have to show that $\tau(z_1,z_2) \geq \tau(\hz_1,\hz_2)$, i.e., $f(1) \geq g(1)$. We even show that $f(t) \geq g(t)$ for all $t \in (0,1]$. 
If $t$ is such that $f(t)>0$, we know $z_1^t \ll z_2$, so we can apply the causal four-point condition to the timelike and endpoint-causal four-point configuration $(y, x, z_1^t, z_2)$ and a corresponding comparison configuration to get that $f(t)\geq g(t)$. 
If $f(t)=0$, we have to show $g(t)=0$ as well. 
Let us now indirectly assume that there is $t_1$ such that $0 = f(t_1) < g(t_1) = \tau(\hz_1^t,\hz_2)$. 
By the reverse triangle inequality we then also have $\tau(\hx, \hz_2) \geq \tau(\hx, \hz_1^{t_1}) + \tau(\hz_1^{t_1}, \hz_2) > 0$. 
Again by Remark \ref{rem: relevant constellations of four-point configs}, we further have that $f(0)= \tau(x, z_2) = \tau(\hx, \hz_2) = g(0) > 0$, and thus for small enough $t$ also $f(t) > 0$ by continuity of $\tau$. In particular, there exists $t_0 \in (0,t_1)$ such that $f(t_0) > 0$. By the above argument, we gather $f(t_0) \geq g(t_0)$. 
As $f$ is continuous, there is a $t^* \leq t_1$ such that $f(t^*)=\min(\frac{g(t_1)}{2},f(t_0))$. 
Then we have $0<f(t^*)<g(t_1)\leq g(t^*)$ in contradiction to the causal four-point condition. 

As to the implication of the causal relations, recall that we are still in the case of $x \ll z_1$ and suppose towards a contradiction that $z_1 \not\leq z_2$ but $\hz_1 \leq \hat{z}_2$. Since $U$ is a causally closed neighbourhood, we infer that $\not\leq$ is open, i.e., $f(t)=0$ for $t$ close enough to $1$. 
However, since $\hz_1 \leq \hat{z}_2$, it follows that $\hz_1^t \ll \hz_2$ for all $t \in [0,1)$, which in turn gives $f(t) < g(t)$, a contradiction to the paragraph above. 

Finally, consider the case of $x\not\ll z_1$ being null related. 
We follow the same proof as above, but have to replace all the arguments leading to $g$ being strictly monotonically decreasing. 
In this case, $\alpha$ is a null curve from $x$ to $z_1$. In particular, we have $z_1^s\leq z_1^t$ for $s < t$ and hence $\tau(y,z_1^t) \geq \tau(y,z_1^s) + \tau(z_1^s,z_1^t)$ by the reverse triangle inequality. Moreover, the second term on the right hand side is zero (since $\alpha$ is null), which is why the reverse triangle inequality must be strict since otherwise regularity of $U$ would be violated. 
Thus, we conclude $\tau(y,z_1^t) > \tau(y,z_1^s)$. 
Arrange the four-point comparison configurations $(\hy, \hx, \hz_1^s, \hz_2)$ and $(\hy, \hx, \hz_1^t, \hz_2)$ such that they share the triangle $\Delta(\hy, \hx, \hz_2)$. 
Note that $\hx \leq \hz_1^s$ and $\hx \leq \hz_1^t$ are null related and point towards the left (by convention), hence $\hx, \hz_1^s$ and $\hz_1^t$ all lie on a null geodesic. 
In particular, $\hz_1^s$ and $\hz_1^t$ are causally related, and since $\tau(\hy,\hz_1^s) < \tau(\hy,\hz_1^t)$, we have $\hz_1^s \leq \hz_1^t$. 
By the reverse triangle inequality and the fact that $\hz_1^s, \hz_1^t$ and $\hz_2$ do not all lie on a single distance realizer, we have $\tau(\hz_1^s,\hz_2) > \tau(\hz_1^t,\hz_2)$ for all $s<t$ whenever $\hz_1^t\leq\hz_2$, i.e., $g$ is monotonically decreasing, and indeed strictly so on $\{ t \mid \hz_1^t \leq \hz_2 \} \subseteq [0,1]$. The rest of the proof works as in the case of $x \ll z_1$. 

The case of a past four-point configuration follows analogously.

The final claim of the proposition follows just as in Proposition \ref{pop: timelike vs causal 4pt condition}. 
\end{proof}

\subsection{Convexity and concavity of \texorpdfstring{$\tau$}{tau}}
Similar to the metric setting (cf., e.g., \cite{AKP19}), a characterization of curvature bounds via convexity or
concavity properties of modified distance functions relies crucially on the analytic properties of solutions
to the differential equation 
\begin{equation}\label{eq:K-convex}
f''-Kf=\lambda\,,
\end{equation}
its homogeneous variant
\begin{equation}\label{eq:K-convex-homogeneous}
f''-Kf=0\,,
\end{equation}
as well as the corresponding differential inequalities\footnote{This is a convexity (resp.\ concavity) condition on $f$, cf.\ \cite{AB03}. } $f''-Kf \ge \lambda$ (resp.\ $\le \lambda$). 
We therefore begin this
section by deriving some essentials of the solution theory for \eqref{eq:K-convex}. In the geometric applications we are interested in, the function $f$ will typically only be continuous. 
For such functions the standard solution concept is the distributional one (although for our purposes the most useful concept is `in the sense of Jensen', cf.\ Definition \ref{def:Jensen_def} below). 
Due to the hypoellipticity of any constant coefficient ordinary differential operator, this makes no difference in the case of equality in \eqref{eq:K-convex}, whose general solution is given explicitly by
\begin{equation}\label{eq:K-convex:explicit_solutions}
f(t) = 
\left\{
\begin{array}{ll}
    \alpha \cos(\sqrt{K}t) + \beta \sin(\sqrt{K}t) + \frac{\lambda}{K} & (K<0) \\
    \lambda \frac{t^2}{2} + \alpha t + \beta &  (K=0) \\
    \alpha \cosh(\sqrt{|K|}t) + \beta \sinh(\sqrt{|K|}t)  + \frac{\lambda}{K} & (K>0),
\end{array}
\right.
\end{equation}
where $\alpha, \beta \in \R$.  
In the inequality case, $f$ being a solution in the sense of distributions to $f''-Kf\ge \lambda$ (resp.\ $\le \lambda$), or a distributional \emph{subsolution} (resp.\ \emph{supersolution}), means that, for any smooth non-negative test function $\varphi$ with compact support in $I$ we have
\begin{equation}\label{eq:distributional_solution}
    \int_I f(t) \varphi''(t) - K f(t) \varphi(t) - \lambda \varphi(t) \,dt \ge 0 \quad (\text{resp.} \ \le 0).
\end{equation}
Such functions automatically are of higher regularity: 
\begin{pop}[Almost-convexity for distributional subsolutions]
\label{lem:almost_convexity_distributional}
Let $I\subseteq \mb{R}$ be an open interval, $f:I\to\mb{R}$ a continuous function which is a distributional subsolution
(resp.\ supersolution) to \eqref{eq:K-convex} and let $t_0\in I$. Then  there is a $c>0$ (resp.\ $c<0$) such that $f(t)+ct^2$ is a convex (resp.\ concave) function near $t_0$. In particular, $f$ is locally Lipschitz and possesses one-sided derivatives at every point of $I$. For these, we have $f'(t^-)\le f'(t^+)$ (resp.\ $f'(t^-)\ge f'(t^+)$) at each $t\in I$. 
\end{pop}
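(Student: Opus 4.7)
The plan is to reduce to the well-known fact that continuous distributional subsolutions to $g''\ge 0$ are convex, by a cosmetic shift that absorbs the zero-order term $Kf+\lambda$ into a quadratic correction.

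First I would localize: fix a closed interval $J \subseteq I$ containing $t_0$ in its interior. Since $f$ is continuous, it is bounded on $J$, so the function $t\mapsto Kf(t)+\lambda$ is bounded on $J$. Choose $c>0$ (in the subsolution case) large enough that $2c \ge \sup_{t\in J}\bigl(-Kf(t)-\lambda\bigr)$, and set $g(t):=f(t)+ct^2$. Since $(ct^2)''=2c$ classically (hence also distributionally), the distributional subsolution property of $f$ gives
\begin{equation*}
g'' = f'' + 2c \;\ge\; Kf+\lambda + 2c \;\ge\; 0 \qquad\text{on the interior of } J
\end{equation*}
in the sense of distributions. The analogous argument with $c<0$ handles the supersolution case, yielding $g''\le 0$.

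Next I would invoke the standard fact that a continuous function with non-negative distributional second derivative on an open interval is convex (and likewise concave in the reversed case). The cleanest justification goes via mollification: if $\rho_\varepsilon$ is a standard mollifier, then $g_\varepsilon:=g*\rho_\varepsilon$ is smooth on slightly smaller subintervals, satisfies $g_\varepsilon''\ge 0$ pointwise, and thus is convex; since $g_\varepsilon \to g$ uniformly on compact subsets (by continuity of $g$), and pointwise limits of convex functions are convex, $g$ is convex on a neighbourhood of $t_0$. This establishes the claimed almost-convexity of $f$.

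Finally, I would read off the regularity consequences from classical convex analysis. A convex function on an open interval is locally Lipschitz and admits left and right derivatives $g'(t^-)\le g'(t^+)$ at every interior point. Subtracting the smooth quadratic $ct^2$ preserves local Lipschitz continuity and the existence of one-sided derivatives, and since $(ct^2)'(t^-)=(ct^2)'(t^+)=2ct$, the inequality transfers verbatim to give $f'(t^-)\le f'(t^+)$ (respectively $\ge$ in the supersolution case). The main (only) non-routine step is the distribution-to-convexity passage in the previous paragraph; everything else is bookkeeping.
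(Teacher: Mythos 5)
Your proposal is correct and follows essentially the same route as the paper: absorb the zero-order term into a quadratic via the local boundedness of $f$, conclude convexity of $f(t)+ct^2$ from the non-negative distributional second derivative (the paper cites H\"ormander for this step where you supply the mollification argument), and read off the Lipschitz and one-sided derivative statements from standard convex analysis.
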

\begin{proof} Let $f$ be a distributional subsolution. 
By assumption, for any $c$ we have
\[
(f(t) + ct^2)'' \ge Kf(t) + \lambda+2c,
\]
and the right hand side can be made non-negative near $t_0$ for $c>0$ sufficiently big since $f$ is 
continuous, hence locally bounded. It follows that $f(t) + ct^2$ is a convex distribution, hence
a convex function near $t_0$ (cf.\ \cite[Theorem 4.1.6]{hoer1}). The remaining claims follow from well-known
properties of convex functions (cf.\ \cite[Corollary 1.1.6]{hoer_convexity}). The supersolution case follows from the subsolution one by considering $-f$.
\end{proof}
For the modified distance function we are going to study below, the following alternative solution concept will be relevant:
\begin{defi}\label{def:Jensen_def} A continuous function
$f:I\to \R$ ($I$ an interval) is called a solution to $f'' - K f \ge \lambda$ in the sense of Jensen\footnote{In \cite{AKP19} this property is called Jensen's inequality, which motivates our terminology here.} 
(ITSJ) if the following holds: If $t_1, t_2\in I$, $t_1<t_2$, $|t_1-t_2|<D_K$ and $g$ is the unique solution to \eqref{eq:K-convex} with $g(t_i) = f(t_i)$ ($i=1,2$), then $f(t)\le g(t)$ for all $t\in [t_1,t_2]$.
\end{defi}
Also here we speak of subsolutions\footnote{The defining inequality in Definition \ref{def:Jensen_def} is the reason we chose the names sub- and supersolution as we did. } ITSJ, and the supersolution case is defined analogously with the inequalities reversed. As in the distributional case, ITSJ solutions enjoy additional regularity properties:
\begin{pop}[Almost-convexity for Jensen subsolutions]\label{lem:almost_convexity_Jensen}
Let $I\subseteq \mb{R}$ be an open interval, $f:I\to\mb{R}$ a continuous function which is a subsolution (resp.\ supersolution) to \eqref{eq:K-convex} in the sense of Jensen and let $t_0\in I$. Then  there is a $c>0$ (resp.\ $c<0$) such that $f(t)+ct^2$ is a convex (resp.\ concave) function near $t_0$. In particular, $f$ is locally Lipschitz and possesses one-sided derivatives at every point of $I$. 
For these, we have $f'(t^-)\le f'(t^+)$ at each $t\in I$.
\end{pop}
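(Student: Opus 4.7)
The plan is to mirror the structure of the proof of Proposition~\ref{lem:almost_convexity_distributional}, but since Jensen's condition only gives a comparison with the exact ODE solution on intervals rather than a distributional inequality, I would work directly with the chord property. Fix $t_0 \in I$, and use continuity of $f$ to choose a compact neighborhood $J \subseteq I$ of $t_0$ on which $|f| \le M$ for some $M > 0$, shrinking $J$ so that its length is strictly less than $\min(D_K, L_0)$ for a threshold $L_0 = L_0(K,\lambda,M) > 0$ to be chosen.

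The central estimate is the following uniform bound: for every subinterval $[t_1, t_2] \subseteq J$, the unique solution $g$ of \eqref{eq:K-convex} with $g(t_i) = f(t_i)$ (existence and uniqueness follow from the explicit formulas \eqref{eq:K-convex:explicit_solutions} under $|t_2 - t_1| < D_K$) satisfies $|g(t)| \le M+1$ on $[t_1, t_2]$. I would verify this by writing $g$ as the linear interpolant of its boundary values plus a Green's function correction proportional to $(t_2 - t_1)^2 (\|Kg + \lambda\|_\infty)$, then fixing $L_0$ small enough (depending only on $K$, $\lambda$, $M$) that this correction is below $1$. Next, choose $c > 0$ with $2c \ge |K|(M+1) + |\lambda|$.

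I now claim $f + ct^2$ is convex on $J$. Pick $t_1 < t_2$ in $J$ and $t \in [t_1,t_2]$, let $L$ be the affine function with $L(t_i) = f(t_i) + ct_i^2$, and let $g$ be as above. The Jensen subsolution property gives $f(t) \le g(t)$, so it suffices to show $g(t) + ct^2 \le L(t)$. Setting $h(t) := L(t) - g(t) - ct^2$, one computes $h(t_1) = h(t_2) = 0$ and
\begin{equation}
h''(t) = -g''(t) - 2c = -Kg(t) - \lambda - 2c \le 0
\end{equation}
by the choice of $c$ and the bound on $|g|$. Hence $h$ is concave on $[t_1,t_2]$ with vanishing endpoint values, so $h \ge 0$ throughout, which yields the chord inequality for $f + ct^2$ and hence convexity on $J$. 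The stated regularity claims (local Lipschitz continuity, existence of one-sided derivatives with $(f+ct^2)'(t^-) \le (f+ct^2)'(t^+)$) then follow from standard properties of convex functions, cf.\ \cite[Corollary 1.1.6]{hoer_convexity}, and descend to $f$ because $ct^2$ is smooth. The supersolution case is completely symmetric: using $f \ge g$ and $c < 0$ with $|c|$ sufficiently large, the analogous auxiliary function $h(t) = g(t) + ct^2 - L(t)$ is concave with vanishing endpoints, yielding concavity of $f + ct^2$.

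The main technical obstacle is the uniform $L^\infty$ bound on the two-point boundary value solutions $g$ across all subintervals of $J$ with boundary values in $[-M,M]$; everything else is then a routine concavity-of-$h$ argument. This uniform bound is a standard stability fact for linear ODEs, but it must be set up so that $L_0$ depends only on $K$, $\lambda$, $M$ (and not on the particular boundary values or the particular subinterval), which is what makes a single choice of $c$ work throughout the neighborhood.
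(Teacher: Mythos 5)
Your proposal is correct, and its skeleton coincides with the paper's: compare $f$ with the Jensen comparison solution $g$ on each chord $[t_1,t_2]$, choose $c$ so large that $g(t)+ct^2$ is convex, and conclude the chord inequality for $f(t)+ct^2$ from $f\le g$; the whole difficulty is then to show that one fixed $c$ works for all chords near $t_0$, i.e.\ to bound $g$ (equivalently $g''=Kg+\lambda$) uniformly over all such boundary value problems with data taken from $f$. Where you differ is in how this uniform bound is obtained. The paper exploits that $g''$ solves the homogeneous equation \eqref{eq:K-convex-homogeneous} and argues via the explicit formulas \eqref{eq:K-convex:explicit_solutions}, splitting into cases: if $|g''|$ is extremal at an endpoint one reads the bound off from $g(t_i)=f(t_i)$, and in the oscillatory case an interior maximum argument using the fixed frequency $\sqrt{|K|}$ (so a radius $r(K)$ independent of the chord) forces $|g''(\hat t)|\le 2K|f(t_1)|+\lambda$; the hyperbolic case has no interior minimum and reduces to the boundary case. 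You instead write $g$ as the affine interpolant of its boundary values plus a Dirichlet Green's function correction of size $\tfrac{(t_2-t_1)^2}{8}\,\|Kg+\lambda\|_{L^\infty}$ and absorb the $\|g\|_{L^\infty}$ term by taking the interval length below a threshold $L_0(K,\lambda,M)$; this avoids any case distinction in $K$ and yields an explicit admissible $c$ (e.g.\ $2c\ge |K|(M+1)+|\lambda|$), at the cost of a short absorption argument (note that you genuinely need the bootstrap step, since the ``correction'' itself involves $\|g\|_{L^\infty}$ -- this is fine because $g$ is continuous on a compact interval, hence a priori bounded). Two cosmetic points: existence/uniqueness of the two-point problem for $|t_1-t_2|<D_K$ should be justified by noting that nontrivial homogeneous solutions vanishing at $t_1$ next vanish only at distance $D_K$, and in the supersolution case the one-sided derivative inequality of course comes out reversed, $f'(t^-)\ge f'(t^+)$, consistent with the distributional version.
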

\begin{proof} To show convexity of $f(t)+ct^2$ we need to establish that, for any $\lambda \in (0,1)$ and
$t_1<t_2$ near $t_0$ we have
\begin{equation}\label{eq:Jensen_almost_convexity}
\begin{split}
\lambda(f(t_1) + ct_1^2) + & (1-\lambda)(f(t_2) + ct_2^2) \\
& \ge f(\lambda t_1 + (1-\lambda)t_2) + c(\lambda t_1 + (1-\lambda)t_2)^2
\end{split}
\end{equation}
For $|t_1-t_2|$ small we can pick $g=g_{t_1,t_2}$ as in Definition \ref{def:Jensen_def} with
$g(t_i)=f(t_i)$ ($i=1,2$). Now picking $c=c(t_1,t_2)>0$ such that $g(t)+ct^2$ becomes convex near 
$t_0$ and inserting in \eqref{eq:Jensen_almost_convexity} we obtain
\begin{align*}
\lambda(f(t_1) & + ct_1^2) +  (1-\lambda)(f(t_2) + ct_2^2)  \\
&=\lambda(g(t_1)  + ct_1^2) + (1-\lambda)(g(t_2) + ct_2^2)\\
&\ge g(\lambda t_1 + (1-\lambda)t_2) + c(\lambda t_1 + (1-\lambda)t_2)^2\\
& \ge f(\lambda t_1 + (1-\lambda)t_2) + c(\lambda t_1 + (1-\lambda)t_2)^2.
\end{align*}
Hence the claim will follow once we are able to show that $c(t_1,t_2)$ can be chosen to remain uniformly bounded for $t_1 < t_2$
sufficiently near to $t_0$. Equivalently, we require a uniform lower bound on $g''$ in a fixed small neighbourhood of $t_0$. Now since $g$ is a solution to 
\eqref{eq:K-convex}, $g''$ is a solution to the corresponding homogeneous equation 
\eqref{eq:K-convex-homogeneous}. From the explicit formulae \eqref{eq:K-convex:explicit_solutions} the claim for $K=0$ follows immediately. So suppose first that
$K>0$. If $g''|_{[t_1,t_2]}$ 
has its minimum at $t_1$ or $t_2$ (e.g.\ if it is monotonous) 
then employing \eqref{eq:K-convex-homogeneous} for $g$ together
with the fact that $g(t_i)=f(t_i)$ ($i=1,2$) allows us to conclude local uniform boundedness of $g''$ from that of $f$.
Otherwise, $|g''|$ must attain a local maximum in $[t_1,t_2]$, say at $\hat t$. Then $|g''| \ge \frac{1}{2} |g''(\hat t)|$ in some ball $B_r(\hat t)$ around $\hat t$. 
Now \eqref{eq:K-convex-homogeneous} for $g''$ together with \eqref{eq:K-convex:explicit_solutions} shows that $g''(t) 
=A\cos(\sqrt{K}(t - \hat{t}))$ 
for suitable amplitude $A$, both depending on $t_1, t_2$. Since, however, the frequency $\sqrt{K}$ depends only on $K$, the same is true for the radius $r=r(K)$. Hence if we restrict to a ball of radius $r(K)$ around $t_0$, then $t_i$ will both
lie in $B_r(\hat t)$, so that (using \eqref{eq:K-convex} for $g$) we have
\[
|g''(\hat t)| < 2 |g''(t_1)| \le 2K |g(t_1)| + \lambda = 2K |f(t_1)| + \lambda,
\]
again allowing us to infer local boundedness of $g''$ around $t_0$ from that of $f$. In the case $K>0$, again by \eqref{eq:K-convex-homogeneous} and \eqref{eq:K-convex:explicit_solutions}, we can write
$g''$ in one of the forms $C\sinh(t+\theta)$, $C\cosh(t+\theta)$, 
$Ce^t$, or $Ce^{-t}$. Since none of these functions has an interior minimum on any finite interval, the claim follows as in the 
case of a boundary minimum above.
\end{proof}
We are going to need a comparison result on
the homogeneous version of \eqref{eq:K-convex}, which can be found in \cite[p. 23]{eschenburg}, cf.\ also \cite[Theorem 5.1.1]{kirchberger}:
\begin{lem}\label{lem:kirchberger} Let $\psi:[0,L]\to \R$ be a smooth solution to $\psi''-K\psi\ge 0$, $\psi(0)=0$,
$\psi(L)=0$ and assume that $L<D_K$. Then $\psi(t)\le 0$ for all $t\in [0,L]$.
\end{lem}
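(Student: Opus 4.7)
The plan is to prove this by a Sturm-type comparison argument: divide $\psi$ by a strictly positive comparison solution of the homogeneous equation, turn the differential inequality into the monotonicity of a Wronskian-like quantity, and then derive a contradiction from any putative positive interior maximum.

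First I would exhibit a strictly positive smooth function $\phi\colon[0,L]\to\R$ satisfying $\phi''-K\phi=0$. For $K\geq 0$, where $D_K=\infty$, one can take $\phi\equiv 1$ if $K=0$ and $\phi(t)=\cosh(\sqrt{K}\,t)$ if $K>0$. For $K<0$, the function $\phi(t)=\cos(\sqrt{-K}(t-L/2))$ does the job, and it is strictly positive on the whole closed interval $[0,L]$ precisely when $\sqrt{-K}\,L/2<\pi/2$, i.e., when $L<\pi/\sqrt{-K}=D_K$. This is the step at which the size bound $L<D_K$ genuinely enters, and I expect it to be the main (though still mild) obstacle: the inequality $\psi\leq 0$ in general fails once $L$ exceeds $D_K$, since the first Dirichlet eigenvalue of $-\d^2$ on $[0,L]$ reaches $-K$ precisely at $L=D_K$.

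Second, I would set $h:=\psi/\phi$ on $[0,L]$. Then $h$ is smooth, $h(0)=h(L)=0$, and $\psi\leq 0$ is equivalent to $h\leq 0$. A direct computation, essentially the derivative of the Wronskian of $\psi$ and $\phi$, gives
\[
(h'\phi^2)' \;=\; (\psi'\phi-\psi\phi')' \;=\; \psi''\phi-\psi\phi'' \;=\; \phi\,(\psi''-K\psi) \;\geq\; 0,
\]
where $\phi''=K\phi$ and $\phi>0$ are combined with the differential inequality satisfied by $\psi$. Hence $h'\phi^2$ is non-decreasing on $[0,L]$.

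Finally, I would assume towards a contradiction that $\psi$, and therefore $h$, takes a strictly positive value somewhere. Then $h$ attains a strictly positive maximum at some interior point $t_0\in(0,L)$, at which $h'(t_0)=0$. Applying the mean value theorem to $h$ on $[0,t_0]$ yields a point $t_1\in(0,t_0)$ with $h'(t_1)=h(t_0)/t_0>0$, so $h'(t_1)\phi(t_1)^2>0=h'(t_0)\phi(t_0)^2$, contradicting the monotonicity of $h'\phi^2$. This forces $\psi\leq 0$ on $[0,L]$, completing the argument.
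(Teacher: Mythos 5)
Your argument is correct. Note that the paper does not prove Lemma \ref{lem:kirchberger} at all: it simply cites \cite{eschenburg} and \cite{kirchberger}, so your self-contained proof is a legitimate substitute rather than a reproduction. All three steps check out: the strictly positive comparison solution $\phi$ exists exactly because $L<D_K$ (for $K<0$ one needs $\sqrt{-K}\,L/2<\pi/2$, which is where the size bound enters, as you say); the identity $(h'\phi^2)'=\phi(\psi''-K\psi)\ge 0$ is the standard Wronskian computation; and the interior positive maximum of $h=\psi/\phi$ together with the mean value theorem contradicts the monotonicity of $h'\phi^2$. The cited references argue in the same Sturm-comparison spirit but typically take the comparison solution $\sn^K$ (vanishing at $0$, positive on $(0,L]$ since $L<D_K$) instead of a strictly positive $\phi$: then $w=\psi'\sn^K-\psi(\sn^K)'$ satisfies $w(0)=0$ and $w'\ge 0$, so $\psi/\sn^K$ is nondecreasing on $(0,L]$ and the boundary condition $\psi(L)=0$ forces $\psi\le 0$ directly, with no contradiction argument; the price is a small amount of care near $t=0$ where $\sn^K$ vanishes, which your choice of a strictly positive $\phi$ avoids. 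Either route is elementary and correct.
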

It is then immediate that the same conclusion holds if $\psi(0), \psi(L)$ are supposed to be $\le 0$ (cf.\ Corollary \ref{cor:strict_kirchberger} below for a strengthening of this result).  

The following result is a slight generalization of \cite[Theorem 3.14]{AKP19} (stated without proof there). Since we will repeatedly rely on arguments required for establishing it, we give a complete proof.
\begin{thm}\label{th:subsolutions_equivalence} Let $I = (a,b)\subseteq \R$ be an interval and $f: I \to \R$ a continuous function. The following are equivalent:
\begin{itemize}
\item[(i)] $f$ is a solution to $f'' - K f \ge \lambda$ in the distributional sense.
\item[(ii)] $f$ is locally Lipschitz and is a solution to $f'' - K f \ge \lambda$ in the support sense
(ITSS), i.e., for all $t_0\in I$ there is a solution $g:I\to\mb{R}$ of \eqref{eq:K-convex} with $g(t_0)=f(t_0)$ and $f\geq g$ on $[t_0-D_K,t_0+D_K]\cap I$.\footnote{Note that in \cite[Theorem 3.14 (b)]{AKP19} the inequality sign
has to be reversed.}
\item[(iii)]  $f$ is a solution to $f'' - K f \ge \lambda$ in the sense of Jensen.
\end{itemize}
The corresponding statement with all inequalities reversed in (i)--(iii) holds as well. 
\end{thm}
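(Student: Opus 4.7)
The plan is to close the cycle $(i) \Rightarrow (iii) \Rightarrow (ii) \Rightarrow (i)$; the supersolution statement then follows by applying the subsolution case to $-f$ with $\lambda$ replaced by $-\lambda$.

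For $(i) \Rightarrow (iii)$, I would argue by mollification. Given $t_1 < t_2$ in $I$ with $t_2 - t_1 < D_K$, let $g$ be the classical solution of \eqref{eq:K-convex} matching $f$ at the endpoints and set $h := f - g$. Then $h$ is a continuous distributional subsolution of $h'' - Kh \geq 0$ with $h(t_1) = h(t_2) = 0$, and the task reduces to proving $h \leq 0$ on $[t_1,t_2]$. Convolution with a standard mollifier yields $h_\varepsilon$ smooth on a slightly shrunken subinterval and satisfying the inequality classically there. Let $\phi_\varepsilon$ be the unique solution of the homogeneous equation \eqref{eq:K-convex-homogeneous} matching $h_\varepsilon$ at the shrunken endpoints; then $h_\varepsilon - \phi_\varepsilon$ meets the hypotheses of Lemma \ref{lem:kirchberger}, whence $h_\varepsilon \leq \phi_\varepsilon$. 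Continuous dependence of solutions on boundary data forces $\phi_\varepsilon \to 0$ uniformly (since the values of $h_\varepsilon$ at the shrunken endpoints converge to $h(t_1)=h(t_2)=0$), and passing $\varepsilon \to 0$ delivers $h \leq 0$.

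For $(iii) \Rightarrow (ii)$, Proposition \ref{lem:almost_convexity_Jensen} already supplies local Lipschitz continuity and one-sided derivatives with $f'(t_0^-) \leq f'(t_0^+)$ at every $t_0 \in I$. To construct the support function, I would pick any $m \in [f'(t_0^-), f'(t_0^+)]$ and let $g$ be the solution of \eqref{eq:K-convex} with $g(t_0) = f(t_0)$ and $g'(t_0) = m$. Suppose, for contradiction, that $f(t_\ast) < g(t_\ast)$ for some $t_\ast > t_0$ with $t_\ast - t_0 < D_K$. Applying $(iii)$ on $[t_0, t_\ast]$ yields $\tilde g$ solving \eqref{eq:K-convex} with $\tilde g(t_0) = f(t_0)$, $\tilde g(t_\ast) = f(t_\ast)$, and $f \leq \tilde g$ throughout the interval. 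Then $\psi := \tilde g - g$ solves the homogeneous equation with $\psi(t_0) = 0$ and $\psi(t_\ast) < 0$; since $t_\ast - t_0 < D_K$, the explicit form \eqref{eq:K-convex:explicit_solutions} shows that the basic homogeneous solution vanishing at $t_0$ keeps definite sign on $(t_0, t_0 + D_K)$, which forces $\tilde g'(t_0) < m$. However, $f \leq \tilde g$ with equality at $t_0$ yields $\tilde g'(t_0) \geq f'(t_0^+) \geq m$, a contradiction. The symmetric argument rules out $t_\ast < t_0$, and $f \geq g$ on $[t_0 - D_K, t_0 + D_K] \cap I$ follows.

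For $(ii) \Rightarrow (i)$, I would test the distributional inequality against a nonnegative $\phi \in C_c^\infty(I)$ using symmetric second differences $D_h^2 f(t) := (f(t+h) - 2f(t) + f(t-h))/h^2$. A change of variables gives $\int f \phi'' \,dt = \lim_{h \to 0} \int D_h^2 f(t) \phi(t) \,dt$, while comparison with the support function $g_t$ from (ii) yields the pointwise bound $D_h^2 f(t) \geq D_h^2 g_t(t) \to g_t''(t) = Kf(t) + \lambda$. The main technical obstacle is to secure a \emph{uniform} lower bound on $D_h^2 f$ over $\supp \phi$ for small $h$, so that Fatou's lemma applies. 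I would handle this by first deducing local semi-convexity of $f$ directly from (ii): for $t_1 < t_2$ close with midpoint $t_0 := (t_1+t_2)/2$, the support inequalities $f(t_i) \geq g_{t_0}(t_i)$ combined with a Taylor expansion of $g_{t_0}$ at $t_0$ bound the deficit $f(t_0) - \tfrac{1}{2}(f(t_1) + f(t_2))$ by $C(t_2 - t_1)^2$, where $C$ is controlled by $\|Kf + \lambda\|_\infty$ on a compact neighborhood (the requisite control on $g_{t_0}'(t_0)$ coming from the Lipschitz bound on $f$, since any smooth lower support $g_{t_0}$ of a Lipschitz function satisfies $|g_{t_0}'(t_0)| \leq \mathrm{Lip}(f)$). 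Consequently $D_h^2 f \geq -2C$ uniformly on compact sets, Fatou's lemma applies, and the resulting inequality $\int f(\phi'' - K\phi)\,dt \geq \lambda \int \phi\,dt$ is precisely $(i)$.
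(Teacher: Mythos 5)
Your proposal is correct, and while your (i)$\Rightarrow$(iii) is essentially the paper's argument (mollify, reduce to the homogeneous inequality, apply Lemma \ref{lem:kirchberger}, pass to the limit using continuous dependence on boundary data), your other two implications take genuinely different routes. For (iii)$\Rightarrow$(ii) the paper constructs the supporting solution as a limit of secant solutions $g_n$ interpolating $f$ at $s_n\nearrow t_0$, $t_n\searrow t_0$, using the ``outer'' Jensen property together with the uniform bounds of \cite{FJ:62} to get convergence; you instead prescribe the support function directly by its Cauchy data $g(t_0)=f(t_0)$, $g'(t_0)=m\in[f'(t_0^-),f'(t_0^+)]$ (nonempty by Proposition \ref{lem:almost_convexity_Jensen}) and exclude a crossing $f(t_*)<g(t_*)$ with $|t_*-t_0|<D_K$ by a slope comparison: the Jensen solution $\tilde g$ through $t_0,t_*$ dominates $f$, so $\tilde g'(t_0)\ge f'(t_0^+)\ge m$ (with the symmetric inequality on the left), while $\tilde g-g=(\tilde g-g)'(t_0)\,\sn^K(\cdot-t_0)$ and the sign of $\sn^K$ on $(0,D_K)$ force the opposite strict inequality. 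This is shorter and avoids the compactness lemma, though it does not produce the ``outer Jensen'' statement that the paper extracts from its proof and records as Corollary \ref{cor:outer_jensen} for later use. For (ii)$\Rightarrow$(i) the paper mollifies and superposes the support solutions into $h_\eps(t)=\int g_{t_0-s}(t-s)\vphi_\eps(s)\,ds$, which necessitates the discussion of $s\mapsto g_s$ being (essentially) bounded and well defined a.e.; your route via symmetric second differences, the pointwise bound $D^2_hf(t)\ge D^2_hg_t(t)\to Kf(t)+\lambda$, and Fatou along a sequence $h_n\to 0$ avoids integrating over the family $\{g_t\}$ altogether, and the uniform integrable minorant you need is indeed available, since $g_{t_0}(t_0)=f(t_0)$ and $|g_{t_0}'(t_0)|\le\mathrm{Lip}_{\mathrm{loc}}(f)$ give a uniform bound on $g_{t_0}''=Kg_{t_0}+\lambda$ on a fixed neighbourhood of $\supp\varphi$. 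Both of your substitutions are sound; they trade the paper's reusable intermediate results for more elementary, self-contained arguments.
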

\begin{proof} Due to Propositions \ref{lem:almost_convexity_distributional} and \ref{lem:almost_convexity_Jensen} we may assume $f$ to be locally Lipschitz throughout.

(i)$\Rightarrow$(iii): Let $\varphi\in C_c^\infty((-1,1))$, $\varphi\ge 0$, $\int \varphi(t)\,dt=1$ and
set $\vphi_\eps(t) := \frac{1}{\eps} \vphi(\frac{t}{\eps})$. Finally, let $f_\eps := f * \vphi_\eps$. Then
$f_\eps$ is smooth on $I_\eps := (a+\eps,b-\eps)$ and satisfies $f_\eps'' - K f_\eps \ge \lambda$ on its domain.
Let $t_1<t_2 \in I$, $|t_1-t_2|<D_K$ and let $\eps>0$ be so small that $[t_1,t_2]\subseteq I_\eps$. Let $g_\eps$ be
the unique solution to \eqref{eq:K-convex} with $g_\eps(t_i)= f_\eps(t_i)$ ($i=1,2$). Then $\psi_\eps := f_\eps - g_\eps$ is smooth, $\psi_\eps'' - K\psi_\eps \ge 0$, and $\psi_\eps(t_i)=0$ for $i=1,2$. By Lemma \ref{lem:kirchberger}, then, $\psi_\eps\le 0$ on $[t_1,t_2]$. Letting $\eps\to 0$ implies that $f\le g$ on $[t_1,t_2]$ (cf.\ \cite[Theorem 11]{Klaasen}). 

(iii)$\Rightarrow$(ii): We first prove that, with $t_1, t_2$ and $g$ as in Definition \ref{def:Jensen_def}, i.e., as in (iii), we also have that $f(t)\ge g(t)$ 
for all $ t\in ((t_2-D_K,t_1] \cup [t_2,t_1+D_K]) \cap I$. We show this for any fixed $t_0\in (t_2-D_K,t_1]\cap I$, the other case being analogous. Denote by $h$ the solution to \eqref{eq:K-convex} with $h(t_0)=f(t_0)$ and $h(t_2)=f(t_2)$. By (iii),
$h\ge f$ on $[t_0,t_2]$ and we claim that $h\ge g$ on $[t_0,t_1]$: Supposing, to the contrary, the existence
of some $\bar t\in [t_0,t_1]$ with $h(\bar t)<g(\bar t)$, then since $h(t_1)\ge g(t_1)$ there would also
have to be some $\hat t \in [t_0,t_1]$ with $g(\hat t) = h(\hat t)$. But then since also $g(t_2)= f(t_2) = h(t_2)$, $g=h$ everywhere by unique
solvability of the boundary value problem for
\eqref{eq:K-convex}, giving a contradiction. Consequently, $g(t_0)\le h(t_0) = f(t_0)$, as claimed. 

Now for given $t\in I$ consider two sequences $s_n\nearrow t$, $t_n\searrow t$, and let  $g_n$ be the unique solution to \eqref{eq:K-convex} with $g_n(s_n) = f(s_n)$ and $g_n(t_n) = f(t_n)$. Since $f$ is locally Lipschitz,
it follows from \cite[Lemma 3]{FJ:62} that both $g_n$ and $g_n'$ remain uniformly bounded in some fixed neighbourhood
of $t$ for $n$ large. In particular, there is a subsequence of $(g_n(t),g_n'(t))$ that converges, so w.l.o.g.\ the whole sequence does. By continuous dependence on initial data, $g_n$ therefore converges to the solution $g$ with initial data $g(t)=\lim_n g_n(t)$ and $g'(t)=\lim_n g_n'(t)$.

For $t-D_K<s<t$ (the other case being analogous), we have for large enough $n$ that $t_n^+- D_K<s<t_n^-<t$. For such $n$ the above considerations show that $f(s)\geq g_n(s)$. Taking the limit as $n\to\infty$, we get $f(s)\geq g(s)$.

(ii)$\Rightarrow$(i): Assume first that $f$ is smooth and let $t_0\in I$ and $g$ as in (ii). Then
\begin{align}
f''(t_0) &= \lim_{h\to 0} \frac{f(t_0+h) - 2f(t_0) + f(t_0-h)}{h^2}\\ 
&\ge  \lim_{h\to 0} \frac{g(t_0+h) - 2g(t_0) + g(t_0-h)}{h^2}\\ 
&= g''(t_0) = \lambda + Kg(t_0) = \lambda + Kf(t_0)
\end{align}
gives the claim in this case. Next, suppose $f$ is only locally Lipschitz and let $\vphi_\eps$ be as above. 
For $s\in I$ let $g_s$ be the solution from (ii) with $t_0=s$. If $f$ is differentiable at $s$ then (ii) implies
that $g_s'(s) = f'(s)$. For such values of $s$, therefore, $g_s$ is uniquely determined and can indeed be calculated
from $f(s)$ and $f'(s)$ according to \eqref{eq:K-convex:explicit_solutions}. Since $f'\in L^\infty_{loc}(I)$ it follows
that $(s,t)\mapsto g_s(t) \in L^\infty(I\times I)$. Now fix $t_0\in I$ and set (for $t$ near $t_0$ and $\eps$ small)
\[
h_\eps(t) := \int g_{t_0-s}(t-s) \vphi_\eps(s)\, ds = \int g_{t_0-s}(s) \vphi_\eps(t-s)\, ds.
\]
Then $h_\eps$ is a smooth solution to $h_\eps'' - Kh_\eps = \lambda$. Moreover,
\[
f_\eps(t_0) = \int f(t_0-s)\vphi_\eps(s) \,ds = \int g_{t_0-s}(t_0-s) \vphi_\eps(s)\, ds = h_\eps(t_0)
\]
and $f_\eps(t) \ge h_\eps(t)$ near $t_0$ (for $\eps$ small). From the smooth case it then follows that
$f_\eps'' - K f_\eps \ge \lambda$ near $t_0$, so $\eps\to 0$ gives the claim.
\end{proof}
We record the following consequence of the proof of (iii)$\Rightarrow$(ii) in Theorem \ref{th:subsolutions_equivalence}:
\begin{cor}[Outer Jensen]\label{cor:outer_jensen}
Let $I\subseteq \mb{R}$ be an open interval and $f:I\to\mb{R}$ locally Lipschitz. Then $f$ is a subsolution in the sense of 
Jensen if and only if it satisfies the following condition: for all $t_1<t_2\in I$ the (unique) solution $g:I\to\mb{R}$ of \eqref{eq:K-convex} with $g(t_i)=f(t_i)$ satisfies $f\geq g$ on $(I\setminus (t_1,t_2))\cap(t_2-D_K,t_1+D_K)$.
\end{cor}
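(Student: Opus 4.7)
One direction of the claim---that a subsolution in the sense of Jensen satisfies the outer Jensen inequality---has essentially already been carried out inside the proof of (iii)$\Rightarrow$(ii) of Theorem~\ref{th:subsolutions_equivalence}: the auxiliary solution $h$ constructed there with $h(t_0)=f(t_0)$ and $h(t_2)=f(t_2)$ is used precisely to conclude $g(t_0)\le h(t_0)=f(t_0)$ for any $t_0\in(t_2-D_K,t_1]\cap I$, and the symmetric case $t_0\in[t_2,t_1+D_K)\cap I$ is handled analogously. The plan is therefore to cite that portion of the argument and focus on the reverse implication.

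For the converse, assume $f$ is locally Lipschitz and satisfies the outer Jensen condition. To show $f$ is a subsolution ITSJ, I would fix $t_1<t_2$ in $I$ with $|t_1-t_2|<D_K$ (the only non-trivial range) and let $g$ be the unique solution of \eqref{eq:K-convex} with $g(t_i)=f(t_i)$. The argument proceeds by contradiction: suppose there exists $t^*\in(t_1,t_2)$ with $f(t^*)>g(t^*)$. Introduce the auxiliary solution $h$ of \eqref{eq:K-convex} determined by $h(t_1)=f(t_1)$ and $h(t^*)=f(t^*)$ (well-defined since $t^*-t_1<D_K$), and apply the outer Jensen hypothesis to the endpoints $(t_1,t^*)$ at the evaluation point $t_2$. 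Since $t^*<t_2<t_1+D_K$ and $t_2-t^*<t_2-t_1<D_K$, the point $t_2$ lies in $(I\setminus(t_1,t^*))\cap(t^*-D_K,t_1+D_K)$, so the hypothesis yields $f(t_2)\ge h(t_2)$.

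It then remains to derive a contradiction by analyzing $\phi:=h-g$. By construction $\phi$ solves the homogeneous equation \eqref{eq:K-convex-homogeneous} with $\phi(t_1)=0$ and $\phi(t^*)>0$. From the explicit formulae \eqref{eq:K-convex:explicit_solutions}, any solution of \eqref{eq:K-convex-homogeneous} vanishing at $t_1$ is a scalar multiple of a single fundamental solution---namely $t-t_1$ for $K=0$, $\sinh(\sqrt{K}(t-t_1))$ for $K>0$, or $\sin(\sqrt{-K}(t-t_1))$ for $K<0$---each of which is strictly positive throughout $(t_1,t_1+D_K)$. The sign of $\phi(t^*)$ thus forces the multiplier to be positive, so $\phi(t_2)>0$; that is, $h(t_2)>g(t_2)=f(t_2)$, contradicting $f(t_2)\ge h(t_2)$. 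Hence no such $t^*$ exists, so $f\le g$ on $[t_1,t_2]$ and $f$ is a subsolution ITSJ.

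The main technical subtlety---and the reason for the appearance of the radius $D_K$ in the formulation---lies in the case $K<0$: the condition $t_2-t_1<D_K=\pi/\sqrt{-K}$ is what simultaneously ensures unique solvability of the boundary-value problems defining $g$ and $h$ and keeps the argument strictly below the first positive zero of $\sin(\sqrt{-K}(\cdot-t_1))$, which is essential for the sign analysis of $\phi$. For $K\ge 0$ the corresponding fundamental solutions are positive on all of $(t_1,\infty)$, and no analogous care is required.
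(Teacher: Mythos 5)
Your proof is correct and follows essentially the paper's route: the forward implication is exactly the first paragraph of the proof of (iii)$\Rightarrow$(ii) of Theorem \ref{th:subsolutions_equivalence}, which is all the paper itself invokes for this corollary. For the converse, which the paper leaves implicit, your auxiliary-solution argument (a homogeneous solution vanishing at $t_1$ is a positive multiple of $\sn^K(\cdot-t_1)$, hence positive on $(t_1,t_1+D_K)$, applied to $h-g$) is precisely the comparison mechanism the paper records right afterwards in Lemma \ref{lem:behaviour_of_solutions}, so your completion coincides in substance with the intended one.
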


The following solutions of \eqref{eq:K-convex} will be of particular interest to us:
\begin{defi}
The \emph{modified distance} function $\md^K:[0,D_K) \to \R$ in the model space $\lm{K}$ is the solution of the initial
value problem
\begin{equation}
\begin{cases}
(\md^K)''-K\md^K=1\\
\md^K(0)=0\\
(\md^K)'(0)=0
\end{cases}
\end{equation}
The \emph{modified sine} function $\sn^K=(\md^K)'$ and the \emph{modified cosine} function
$\cn^K=(\sn^K)'$ are the solutions of the following initial value problems for the corresponding homogeneous equation:
\begin{equation}
\begin{cases}
(\sn^K)''-K\sn^K=0\\
\sn^K(0)=0\\
(\sn^K)'(0)=1
\end{cases}
\end{equation}
and
\begin{equation}
\begin{cases}
(\cn^K)''-K\cn^K=0\\
\cn^K(0)=1\\
(\cn^K)'(0)=0
\end{cases}
\end{equation}
Explicitly:\\

\begin{center}
\begin{tabular}{c|c|c|c}
&$\md^K(t)$&$\sn^K(t)$&$\cn^K(t)$  \\ \hline
$K=0$&$\frac{t^2}{2}$&$t$&$1$\\ \hline
$K=1$&$\cosh(t)-1$&$\sinh(t)$&$\cosh(t)$\\ \hline
$K=-1$&$1-\cos(t)$&$\sin(t)$&$\cos(t)$
\end{tabular}
\end{center}
\end{defi}

The role of the modified distance is the following, compare \cite[Chapter 1, 1.1(a)]{AKP19}:
\begin{lem}[$\md^K$ and geodesics in model spaces]\label{lem:modified_distance_in_model_spaces}
Let $p\in\lm{K}$ and $\gamma:\mb{R}\to\lm{K}$ be a $\tau$-unit speed geodesic. 
Then the partial function 
\begin{equation} 
\label{eq:mdTauTwoPart}
f(t)=\begin{cases}
    \md^K(\tau(p,\gamma(t)))&p\leq\gamma(t)\\
    \md^K(\tau(\gamma(t),p))&\gamma(t)\leq p
\end{cases}
\end{equation}
satisfies:
\begin{equation}
\label{eq:convexity_with_lambda1}
f''-K f=1 \,,
\end{equation}
more precisely, $f=\md^K+C\sn^K+D\cn^K
$ where this is positive, and $f$ is not defined where the right hand side is negative, with $C=f'(0)$ and $D=f(0)$. In particular, it is extensible to a solution on $\mb{R}$.
\end{lem}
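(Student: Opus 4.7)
The plan is to verify the differential equation \eqref{eq:convexity_with_lambda1} directly in each of the three model spaces $\lm{K}$ by means of their explicit structure, and then to identify the constants $C$ and $D$ from the initial data via the general solution formula \eqref{eq:K-convex:explicit_solutions}. The two branches in the piecewise definition of $f$ are interchanged by swapping $p$ and $\gamma(t)$, which corresponds to reversing the time orientation of $\lm{K}$; since the ODE and the model space are invariant under this involution, it suffices to treat the branch where $p\leq\gamma(t)$.

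For $K=0$, the model space $\lm{0}$ is two-dimensional Minkowski space and $\md^0(s)=s^2/2$. Writing $\gamma(t)=q+tv$ with $v$ a future-directed unit timelike vector and setting $w\coloneqq q-p$, one computes
\begin{equation}
\tau(p,\gamma(t))^2 \;=\; -\langle w+tv,w+tv\rangle \;=\; t^2 - 2t\langle w,v\rangle - \langle w,w\rangle
\end{equation}
wherever $p\leq\gamma(t)$, so $f(t)=\tau(p,\gamma(t))^2/2$ is a quadratic polynomial in $t$ with leading coefficient $1/2$. This manifestly satisfies $f''=1$ on its domain of definition and admits the evident smooth extension to all of $\mb{R}$ of the form $\md^0(t)+Ct+D = t^2/2 + Ct + D$.

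For $K\neq 0$, I would realize $\lm{K}$ as a quadric in a three-dimensional vector space $V$ carrying a nondegenerate bilinear form $\langle\cdot,\cdot\rangle$ of suitable signature, so that timelike geodesics arise as intersections of $\lm{K}$ with two-planes through the origin, and the time separation is recovered from an ambient law-of-cosines identity of the form $\cn^K(\tau(p,q)) = \mp K\langle p,q\rangle$. In this picture, a unit speed timelike geodesic $\gamma$ has the closed form $\gamma(t) = \cn^K(t)\gamma(0) + \sn^K(t)\gamma'(0)$, so that $\langle p,\gamma(t)\rangle = \cn^K(t)\langle p,\gamma(0)\rangle + \sn^K(t)\langle p,\gamma'(0)\rangle$ is a linear combination of $\cn^K(t)$ and $\sn^K(t)$. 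Combined with the identity $\md^K = K^{-1}(\cn^K-1)$, which is immediate from the table, this yields
\begin{equation}
f(t) \;=\; \alpha\,\cn^K(t) + \beta\,\sn^K(t) - K^{-1}
\end{equation}
for certain constants $\alpha,\beta\in\mb{R}$. A direct substitution, using that $\sn^K$ and $\cn^K$ solve the homogeneous equation, confirms $f''-Kf=1$, and rewriting via $\cn^K = K\md^K+1$ puts $f$ in the form $\md^K + C\sn^K + D\cn^K$ with $C=\beta$ and $D=\alpha-K^{-1}$.

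Finally, evaluating at $t=0$ and using $\cn^K(0)=1$, $\sn^K(0)=0$, $(\cn^K)'(0)=0$, and $(\sn^K)'(0)=1$ identifies $D=f(0)$ and $C=f'(0)$, uniformly across all three cases. The right-hand side $\md^K+C\sn^K+D\cn^K$ is defined on all of $\mb{R}$ and agrees with $f$ wherever the latter is defined; the (possibly empty) regions where the extension takes negative values correspond exactly to those $t$ for which $\gamma(t)$ and $p$ are not causally related, so $f$ is undefined there, yielding the asserted extensibility. The main obstacle is the sign and embedding convention bookkeeping in the $K\neq 0$ case; everything else is a direct consequence of the defining ODEs of $\md^K$, $\sn^K$, and $\cn^K$ together with linearity of the ambient inner product in its arguments.
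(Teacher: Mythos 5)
Your proposal is correct and follows essentially the same route as the paper's proof: a direct verification in the quadric models, using the ambient inner-product ("law of cosines") expression for $\tau$ so that $f$ becomes an affine function of $\langle p,\gamma(t)\rangle$, which is a linear combination of $\cn^K(t)$ and $\sn^K(t)$. The only cosmetic difference is that the paper first rescales to $K\in\{-1,0,1\}$ and normalizes $p$ and $\gamma$ by isometries to compute in explicit coordinates, whereas you keep general $K$ and use the closed-form geodesic $\gamma(t)=\cn^K(t)\gamma(0)+\sn^K(t)\gamma'(0)$; note also that the symmetry of $\langle p,\gamma(t)\rangle$ in its arguments is what makes both branches of \eqref{eq:mdTauTwoPart} extend to the \emph{same} solution, which your computation yields just as the paper's does.
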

\begin{proof}
By rescaling it suffices to consider the cases $K=-1,0,+1$. For $K=0$, after applying a suitable Lorentz transformation we can assume  that $p=(t_p,x_p)$ and $\gamma(t)=(t,0)$. It then follows that $f(t)=\frac{t^2}{2}-t_pt + \frac{t_p^2-x_p^2}{2}  =\md^0(t)+f'(0)\sn^0(t) + f(0)\cn^0(t)$.

For the case $K=1$, again by applying a suitable\footnote{Here, we view de Sitter space as the set $\{(t,x,y) \mid -t^2+x^2+y^2=+1\}$ embedded in $\R^3_1$.} Lorentz transformation we may
assume that $p=(t_p,x_p,y_p)$ and $\gamma(t)=(\sinh(t),0,\cosh(t))$. 
Now for any causally related points $v,w$ in de Sitter space $\lm{1}$, the time separation function
is given explicitly by $\tau(v,w) = \arcosh \langle v, w\rangle$ (cf., e.g., \cite[(2.7)]{CRHK}).
Consequently, for $p\le \gamma(t)$, $\tau(p,\gamma(t)) = \arcosh(-t_p\sinh{t} + y_p \cosh{t})$, and so
\begin{align*}
    f(t)=\md^1(\tau(p,\gamma(t))) &= -t_p\sinh{t} + y_p \cosh{t} - 1\\
        &= \md^1(t) + f'(0)\sn^1(t) + f(0)\cn^1(t).
\end{align*}
Finally, for the anti-de Sitter\footnote{Here, we view anti-de Sitter space as the set $\{(s,t,x) \mid -s^2-t^2+x^2=-1\}$ embedded in $\R^3_2$.} case $K=-1$, we can w.l.o.g.\ assume $\gamma(t)=(\cos(t),\sin(t),0)$, and we set $p=(s_p,t_p,x_p)$. Here, for causally related points $v,w$ we have $\tau(v,w) = \arccos(- \langle v, w\rangle)$ (using \cite[(2.7)]{CRHK}, together
with \cite[Lem.\ 4.24]{ON83}). Thus for $p\le \gamma(t)$ we have
\begin{align*}
    f(t) &= \md^{-1}(\tau(p,\gamma(t))) = 1 + \langle p,\gamma(t) \rangle 
        = 1-s_p\cos{t} - t_p \sin{t}\\
        &= \md^{-1}(t) + f'(0) \sn^{-1}(t) + f(0)\cn^{-1}(t).
\end{align*}
\end{proof}

The definition of $f$ in \eqref{eq:mdTauTwoPart} requires a distinction in cases depending on the causal relation of $p$ and $\gamma(t)$. 
This is also the main difference with respect to the metric machinery: if there is no causal relation between $p$ and $\gamma(t)$, then $f$ cannot give any information. 
This is why the next few results build up  the theory in order to extend some known results from metric geometry to domains which consist of two intervals.

\begin{lem}
\label{lem:behaviour_of_solutions} 
Let $f_i:(a,b)\to\mb{R}$, $i=1,2$, be two solutions of \eqref{eq:K-convex} and let $t_1, t_2\in I$ with $t_1<t_2$. Then if $f_1(t_1)=f_2(t_1)$ and $f_1(t_2)<f_2(t_2)$, we have $f_1<f_2$ on $(t_1,\min(b,t_1+D_K))$ and $f_1>f_2$ on $(\max(a,t_1-D_K),t_1)$.
In particular, if $f_1(t_1)>f_2(t_1)$ and $f_1(t_2)<f_2(t_2)$ we have $f_1<f_2$ on $(t_2,\min(b,t_1+D_K))$ and $f_1>f_2$ on $(\max(a,t_2-D_K),t_1)$. 
\end{lem}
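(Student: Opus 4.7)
The approach is to reduce the problem to a statement about a single function by setting $g := f_1 - f_2$. Since both $f_i$ satisfy $f_i''-Kf_i=\lambda$, the difference $g$ is a smooth solution of the homogeneous equation $g''-Kg=0$. The key observation is that any such solution with a prescribed zero at $t_0$ is determined by the single scalar $g'(t_0)$ via the formula $g(t) = g'(t_0)\,\sn^K(t-t_0)$. Combined with the sign behaviour of $\sn^K$ read off from the explicit table (strictly positive on $(0,D_K)$, strictly negative on $(-D_K,0)$), this drives the entire argument.

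For the first part, I take $t_0=t_1$, using $g(t_1)=0$. The natural regime is $t_2-t_1<D_K$ (the conclusion only makes a claim on a window of width $D_K$ around $t_1$, and indeed fails without such a constraint, as e.g.\ $K=-1$, $g(t)=\sin t$, $t_2=3\pi/2$ would show). Then $\sn^K(t_2-t_1)>0$, and the hypothesis $g(t_2)<0$ forces $C:=g'(t_1)<0$. Substituting back, $g(t)=C\,\sn^K(t-t_1)$ is strictly negative on $(t_1,t_1+D_K)$ and strictly positive on $(t_1-D_K,t_1)$. Intersecting with $(a,b)$ yields exactly $f_1<f_2$ on $(t_1,\min(b,t_1+D_K))$ and $f_1>f_2$ on $(\max(a,t_1-D_K),t_1)$, as claimed.

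For the second part, the hypotheses $g(t_1)>0>g(t_2)$ and continuity of $g$ give, via the intermediate value theorem, some $t^*\in(t_1,t_2)$ with $g(t^*)=0$. I apply the first part with $t^*$ in the role of $t_1$ (and $t_2$ unchanged): this yields $f_1<f_2$ on $(t^*,t^*+D_K)$ and $f_1>f_2$ on $(t^*-D_K,t^*)$. Since $t_1\le t^*\le t_2$, the inclusions $(t_2,t_1+D_K)\subseteq(t^*,t^*+D_K)$ and $(t_2-D_K,t_1)\subseteq(t^*-D_K,t^*)$ hold, so the stated conclusions follow immediately.

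The only real obstacle is the careful bookkeeping of the two sign regions of $\sn^K$ on $(-D_K,D_K)$ and ensuring that the relevant arguments $t-t_1$ remain inside this window, so that the sign of $\sn^K(t_2-t_1)$ genuinely pins down the sign of $g'(t_1)$. Once this is in place, the rest is a direct elementary linear ODE computation plus one application of the intermediate value theorem.
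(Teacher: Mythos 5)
Your argument is essentially the paper's own proof: reduce to the difference $g=f_1-f_2$, which solves the homogeneous equation \eqref{eq:K-convex-homogeneous}, write it as a multiple of $\sn^K(\cdot-t_1)$ using the zero at $t_1$, read off the sign from the explicit sign behaviour of $\sn^K$ on $(-D_K,D_K)$, and handle the ``in particular'' part via an intermediate zero and the first part. Your explicit remark that one needs $t_2-t_1<D_K$ to pin down the sign of $g'(t_1)$ is a correct (and welcome) observation; the paper's proof uses this tacitly when it divides by $\sn^K(t_2)$ and asserts the resulting coefficient is positive.
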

\begin{proof}
Assume w.l.o.g.\ that $t_1=0$ and let $f:=f_2-f_1$. 
Then $f$ is a solution of the homogeneous problem \eqref{eq:K-convex-homogeneous}, hence it is of the form $f=a\cn^K+b\sn^K$. 
As $f(0)=0$, we have $f=b\sn^K$ and $b=\frac{f(t_2)}{\sn^K(t_2)}>0$. By the explicit formula, $\sn^K(t)>0$ for $t\in(0,D_K)$ and $\sn^K(t)<0$ for $t\in(-D_K,0)$. 
For $f_1$ and $f_2$ and general $t_1$ this means that $f_1<f_2$ on $(t_1,\min(b,t_1+D_K))$ and $f_1>f_2$ on $(\max(a,t_1-D_K),t_1)$.

For the ``in particular'' statement, note that $f$ has a zero between $t_1$ and $t_2$, so we can apply the main part of the lemma. 
\end{proof}
As a consequence, we obtain the following strengthening of Lemma \ref{lem:kirchberger}: 
\begin{cor}\label{cor:strict_kirchberger} 
Let the continuous function $\psi: [0,L] \to \R$ be a solution to $\psi''-K\psi\ge 0$ in the sense of Jensen
and assume that $\psi(0)\le0$ and $\psi(L)\le0$, with at least one of these inequalities strict. Then $\psi(t)<0$ for all $t\in (0,L)$. 
\end{cor}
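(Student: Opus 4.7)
The plan is to reduce the problem to understanding zeros of smooth solutions of the homogeneous equation, using the Jensen property of $\psi$ to upgrade information from the boundary to a pointwise bound. Since the corollary strengthens Lemma \ref{lem:kirchberger}, I take $L<D_K$ as implicit. First I would invoke the Jensen property directly: let $g\col [0,L]\to\R$ be the unique solution of $g''-Kg=0$ with $g(0)=\psi(0)$ and $g(L)=\psi(L)$. Applying Definition \ref{def:Jensen_def} with the subsolution $\psi$ (here $\lambda=0$, so the comparison solution is purely homogeneous) yields $\psi\leq g$ on $[0,L]$. Since at least one of $\psi(0),\psi(L)$ is strictly negative, $g$ is not identically zero, and it therefore suffices to show $g<0$ on $(0,L)$.

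Next, I would use the explicit formulas \eqref{eq:K-convex:explicit_solutions} for the homogeneous equation to extract the key rigidity: any nonzero solution of $g''-Kg=0$ has at most one zero in any interval of length strictly less than $D_K$. Indeed, for $K\geq 0$ one has $D_K=\infty$ and the explicit linear combinations of $\cn^K$ and $\sn^K$ admit at most one zero globally, while for $K<0$ the solutions are sinusoidal with zero spacing exactly $D_K=\pi/\sqrt{-K}$. In either case, since $L<D_K$, the restriction $g|_{[0,L]}$ has at most one zero.

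With that in hand, I would finish by a short case analysis ruling out $g(t^*)\geq 0$ for any $t^*\in(0,L)$. If both $g(0)<0$ and $g(L)<0$, such a $t^*$ would force, by the intermediate value theorem, a zero of $g$ in $(0,t^*]$ and a second zero in $[t^*,L)$, contradicting the at-most-one-zero property. If instead $g(0)=0$ and $g(L)<0$ (the case $g(0)<0, g(L)=0$ being symmetric), any $t^*\in(0,L)$ with $g(t^*)>0$ forces a zero in $(t^*,L)$ by IVT, giving a second zero together with $g(0)=0$; and $g(t^*)=0$ directly produces two zeros in $[0,L]$. Either outcome contradicts the zero count, so $g<0$ on $(0,L)$, and hence $\psi\leq g<0$ there.

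I do not expect any real obstacle here: the whole argument rests on the elementary fact about spacing of zeros of solutions to $g''-Kg=0$, which is immediate from \eqref{eq:K-convex:explicit_solutions}, and the Jensen property used in its cleanest form. The only mildly delicate point is keeping the case distinction tidy, which is why I prefer to reduce first to $g$ and then rule out zeros of $g$ inside $(0,L)$, rather than trying to argue with $\psi$ directly.
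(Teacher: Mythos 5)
Your proposal is correct in substance and shares its opening move with the paper: both apply the Jensen property (Definition \ref{def:Jensen_def}, with $\lambda=0$ and the implicit size bound $L<D_K$) to the pair $t_1=0$, $t_2=L$, obtaining $\psi\le g$ on $[0,L]$ for the homogeneous solution $g$ with $g(0)=\psi(0)$, $g(L)=\psi(L)$. You diverge in the final step. The paper splits $g=\varphi_1+\varphi_2$ by linearity, where $\varphi_1(0)=\psi(0)$, $\varphi_1(L)=0$ and $\varphi_2(0)=0$, $\varphi_2(L)=\psi(L)$, and reads off from Lemma \ref{lem:behaviour_of_solutions} (essentially the sign of $\sn^K$ on $(0,D_K)$ and $(-D_K,0)$) that each summand is $\le 0$ on $(0,L)$ with at least one strictly negative; this avoids any case analysis and reuses an established lemma. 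You instead prove directly that a nontrivial solution of \eqref{eq:K-convex-homogeneous} has at most one zero on an interval of length $<D_K$ (immediate from \eqref{eq:K-convex:explicit_solutions}) and then exclude $g(t^*)\ge 0$ by the intermediate value theorem; this is the same mechanism in a different guise and is equally valid. One subcase is stated too quickly: if $g(0)<0$, $g(L)<0$ and $g(t^*)=0$ at an interior point, the zeros you produce in $(0,t^*]$ and $[t^*,L)$ could both be $t^*$ itself, so the at-most-one-zero property is not yet contradicted. The repair is immediate: zeros of a nontrivial solution of a second-order linear ODE are simple (if $g(t^*)=g'(t^*)=0$ then $g\equiv 0$), so either $g$ changes sign at $t^*$ and the negative endpoint on the positive side yields a genuinely distinct second zero, or equivalently $g\le 0$ near $t^*$ would make $t^*$ an interior maximum with $g'(t^*)=0$, forcing $g\equiv 0$ and contradicting that one boundary value is strictly negative.
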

\begin{proof} Let $\vphi_1$ be the unique solution to \eqref{eq:K-convex-homogeneous} with $\vphi_1(0)=\psi(0)$
and $\vphi_1(L)=0$, and $\vphi_2$ the one with $\vphi_2(0)=0$ and
$\vphi_2(L) = \psi(L)$. Then $\vphi := \vphi_1+\vphi_2$ is the unique solution to \eqref{eq:K-convex-homogeneous} with 
the same boundary conditions as $\psi$, so $\psi\le \vphi$ on $[0,L]$.
This proves the claim since at least one of $\vphi_1$ and $\vphi_2$ is strictly negative on $(0,L)$ by 
Lemma \ref{lem:behaviour_of_solutions}.
\end{proof}

In the following results, we will require a Jensen-type solution concept that is applicable to domains 
more general than intervals: 

\begin{defi} Let $U\subseteq \R$ be any subset. A continuous function $f:U\to \R$
is said to satisfy the \emph{Jensen subsolution (resp.\ supersolution) inequality} for the parameters $(t_1,t_2,t_3)$ with $t_1<t_2<t_3$
and $|t_1-t_3|<D_K$ if $f(t_2) \le g(t_2)$ (resp.\ $f(t_2) \geq g(t_2)$), where $g$ is the unique solution to \eqref{eq:K-convex} with $g(t_i)=f(t_i)$ for $i=1,3$.
\end{defi}

\begin{rem}[Reformulation of subsolutions (resp.\ supersolutions)]
\label{rem: reformulation subsolution supersolution jensen}
Note that $f$ is a subsolution (resp.\ supersolution) in the sense of Jensen if and only if it satisfies the 
Jensen subsolution (resp.\ supersolution) inequality for all parameters $(t_1,t_2,t_3)$ with $t_1<t_2<t_3$
and $|t_1-t_3|<D_K$.    
\end{rem}

\begin{pop}[Splitting Jensen]\label{prop:splitting_jensen}
Let $I\subseteq \mb{R}$ be an open interval, $f:I\to\mb{R}$ a continuous function. Let $t_1<t_2<t_3<t_4$ and $|t_1-t_4|<D_K$. If $f$ satisfies the Jensen subsolution (supersolution) inequality for the parameters $(t_1,t_2,t_3)$ and for $(t_2,t_3,t_4)$, it also satisfies it for $(t_1,t_2,t_4)$ and $(t_1,t_3,t_4)$. 
\end{pop}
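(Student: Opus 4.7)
Write $g_{ij}$ (for $i<j$ in $\{1,2,3,4\}$) for the unique solution of \eqref{eq:K-convex} matching $f$ at $t_i$ and $t_j$; the hypothesis then reads $f(t_2)\le g_{13}(t_2)$ and $f(t_3)\le g_{24}(t_3)$, while the conclusion we need is $f(t_2)\le g_{14}(t_2)$ and $f(t_3)\le g_{14}(t_3)$. The central tool is that any difference $g_{ij}-g_{i'j'}$ solves the homogeneous equation \eqref{eq:K-convex-homogeneous}, and Lemma \ref{lem:behaviour_of_solutions} controls the sign of such a difference on intervals of length less than $D_K$. The bound $|t_1-t_4|<D_K$ is precisely what keeps all required comparisons within one such window.

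I would first establish the Jensen inequality for $(t_1,t_3,t_4)$ by contradiction. Assuming $f(t_3)>g_{14}(t_3)$, the homogeneous solution $g_{13}-g_{14}$ vanishes at $t_1$ and is strictly positive at $t_3$, so Lemma \ref{lem:behaviour_of_solutions} forces it to remain positive throughout $(t_1,t_1+D_K)\ni t_4$, giving $g_{13}(t_4)>g_{14}(t_4)=f(t_4)=g_{24}(t_4)$. At $t_3$, the Jensen inequality for $(t_2,t_3,t_4)$ provides $g_{24}(t_3)\ge f(t_3)=g_{13}(t_3)$. A second application of Lemma \ref{lem:behaviour_of_solutions}---its first part in the case of equality at $t_3$, its ``in particular'' part in the case of strict inequality, the latter placing a zero of $g_{24}-g_{13}$ in $(t_3,t_4)$---then puts $t_2\in(t_4-D_K,t_3)$ into a region where $g_{24}>g_{13}$, whence $f(t_2)=g_{24}(t_2)>g_{13}(t_2)$, contradicting the Jensen inequality for $(t_1,t_2,t_3)$.

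The Jensen inequality for $(t_1,t_2,t_4)$ is obtained by a symmetric argument. Supposing $f(t_2)>g_{14}(t_2)$, the homogeneous solution $g_{24}-g_{14}$ vanishes at $t_4$ and is positive at $t_2<t_4$; by the natural time-reversed analogue of Lemma \ref{lem:behaviour_of_solutions} (immediate from the explicit form \eqref{eq:K-convex:explicit_solutions} of homogeneous solutions, which spaces consecutive zeros by at least $D_K$), it is positive on the entire left-window $(t_4-D_K,t_4)$. In particular $g_{24}(t_1)>f(t_1)=g_{13}(t_1)$ and, using the $(t_1,t_3,t_4)$-inequality just proved, $g_{24}(t_3)>g_{14}(t_3)\ge f(t_3)=g_{13}(t_3)$. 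The homogeneous solution $g_{24}-g_{13}$ is thus strictly positive at both $t_1$ and $t_3$, and since $|t_1-t_3|<D_K$ the zero-spacing principle forces it to be strictly positive throughout $[t_1,t_3]$, producing $g_{24}(t_2)>g_{13}(t_2)$, i.e., $f(t_2)>g_{13}(t_2)$, which again contradicts Jensen for $(t_1,t_2,t_3)$. The supersolution case is identical after reversing all inequalities throughout. The main obstacle is not conceptual but bookkeeping: one must carefully track whether each comparison is strict or non-strict and verify, via $|t_1-t_4|<D_K$, that every invocation of Lemma \ref{lem:behaviour_of_solutions} occurs within a single $D_K$-window.
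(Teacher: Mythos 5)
Your argument is correct, but it takes a genuinely different route from the paper's. The paper's proof is direct and yields both conclusions in one stroke: from the two hypotheses and Lemma \ref{lem:behaviour_of_solutions} it first deduces $g_{24}(t_1)\le f(t_1)=g_{13}(t_1)$, and then applies Lemma \ref{lem:kirchberger} (in the version allowing non-positive boundary values) to the homogeneous difference $\psi=g_{24}-g_{14}$, which satisfies $\psi(t_1)\le 0$, $\psi(t_4)=0$ on an interval of length $<D_K$; hence $g_{24}\le g_{14}$ on all of $[t_1,t_4]$, giving $f(t_2)=g_{24}(t_2)\le g_{14}(t_2)$ and $f(t_3)\le g_{24}(t_3)\le g_{14}(t_3)$ simultaneously. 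You instead argue by contradiction, in two stages (the $(t_1,t_3,t_4)$ inequality first, then $(t_1,t_2,t_4)$ using it), never invoking Lemma \ref{lem:kirchberger}, but using Lemma \ref{lem:behaviour_of_solutions} repeatedly together with two facts not literally stated in the paper --- the time-reversed variant of that lemma and the zero-spacing of nontrivial solutions of \eqref{eq:K-convex-homogeneous} --- both of which are indeed immediate from \eqref{eq:K-convex:explicit_solutions}, as you note. (Your final step also implicitly uses that zeros of a nontrivial homogeneous solution are simple, so that strict positivity at $t_1$ and $t_3$ with $|t_1-t_3|<D_K$ really propagates to all of $[t_1,t_3]$; alternatively, that step can be extracted from Lemma \ref{lem:behaviour_of_solutions} as stated.) What the paper's route buys is brevity and a single window argument delivering both target inequalities at once; what yours buys is independence from Kirchberger's lemma, at the price of a longer case analysis and of the second conclusion depending on the first.
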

\begin{proof} Again it will suffice to prove the subsolution case. Let 
$g_{13}$, $g_{24}$ and $g_{14}$ be the solutions to \eqref{eq:K-convex} with 
$g_{13}(t_1)=f(t_1)$, $g_{13}(t_3)=f(t_3)$, $g_{24}(t_2)=f(t_2)$ and $g_{24}(t_4)=f(t_4)$, $g_{14}(t_1)=f(t_1)$ and $g_{14}(t_4)=f(t_4)$. Then our assumption is that $f(t_2)\le g_{13}(t_2)$, as 
well as $f(t_3)\leq g_{24}(t_3)$. We then have to show,  w.l.o.g., that $f(t_2)\leq g_{14}(t_2)$. 

As $g_{13}(t_2)\geq g_{24}(t_2)$ and $g_{13}(t_3)\leq g_{24}(t_3)$, we can use Lemma \ref{lem:behaviour_of_solutions} to conclude that $g_{24}(t_1)\leq f(t_1)$. Now set $\psi:=g_{24}-g_{14}$. Then $\psi(t_1)\le 0$ and $\psi(t_4)=0$,
so Lemma \ref{lem:kirchberger} implies that $\psi\le 0$ on $[t_1,t_4]$. In particular, $f(t_2) = g_{24}(t_2)\le g_{14}(t_2)$.

\end{proof}
Note that the above result could in fact be formulated entirely in terms of properties of $g_{13}$, $g_{24}$ and $g_{14}$, without recourse to the function $f$. However, for the applications we have in mind the formulation we chose is more appropriate. 

As in the case of the model spaces (see Lemma \ref{lem:modified_distance_in_model_spaces}) we 
will be interested in when a function $f$ defined on two disjoint intervals is a subsolution of \eqref{eq:K-convex} when restricted to the set where it is non-negative. For our intended purposes (cf.\ Definition \ref{def-cb-taucvx} below), these intervals will be closed. In this case we note
the following immediate but helpful consequence of Definition \ref{def:Jensen_def}:
\begin{rem}\label{rem:subsolutions_equivalence_other_notions}
If $f$ is a continuous function on an interval $[a,b]$, then $f$ is a subsolution (resp.\ supersolution) to \eqref{eq:K-convex} ITSJ on $[a,b]$ if and only if it is one on $(a,b)$.
It follows from this, together with Theorem \ref{th:subsolutions_equivalence}, that in the
following result the Jensen solution concept on intervals can also be expressed in terms of
the equivalent notions given there.
\end{rem}

\begin{pop}
\label{pop: split domain equivalences}
Let $I=[a,b]\cup[c,d]$, $a<b<c<d$, and $f:I\to\mb{R}$ continuous with $f(b)=f(c)=0$. Then the following are equivalent:
\begin{enumerate}
    \item $f$ is a subsolution ITSJ of \eqref{eq:K-convex} in the following sense: 
    For all $t_1,t_2,t_3\in [a,b]\cup[c,d]$ with $t_1<t_2<t_3$ and $|t_1-t_3|<D_K$, $f$ satisfies the
    Jensen subsolution inequality.
    \item $f$ is extensible as a subsolution of \eqref{eq:K-convex} ITSJ on $[a,d]$.
    \item $f$ is a subsolution of \eqref{eq:K-convex} ITSJ on both parts of its domain, and for the solution $g$ of \eqref{eq:K-convex} with $g(b)=g(c)=0$ we have $f'(b^-)\leq g'(b)$ and $f'(c^+)\geq g'(c)$.
\end{enumerate}
An analogous equivalence holds for supersolutions. 
\end{pop}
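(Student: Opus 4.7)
The plan is to prove the cyclic implications $(ii)\Rightarrow(i)\Rightarrow(iii)\Rightarrow(ii)$, with the candidate extension $\tilde{f}:[a,d]\to\mathbb{R}$ defined by $\tilde{f}:=f$ on $[a,b]\cup[c,d]$ and $\tilde{f}:=g$ on $[b,c]$ playing the central role. Continuity of $\tilde{f}$ is immediate from $f(b)=g(b)=0=g(c)=f(c)$. The step $(ii)\Rightarrow(i)$ is essentially tautological: any admissible triple in the split domain also lies in $[a,d]$, so the Jensen subsolution inequality on the extension restricts to the one in $(i)$.

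For $(i)\Rightarrow(iii)$, choosing all three parameters inside one piece immediately gives that $f$ is a subsolution ITSJ there. The derivative conditions I would obtain via a limiting argument: for $t_1\in[a,b)$ with $|t_1-c|<D_K$, let $h_{t_1}$ be the unique solution of \eqref{eq:K-convex} with $h_{t_1}(t_1)=f(t_1)$ and $h_{t_1}(c)=0$. Applying $(i)$ to the triple $(t_1,b,c)$ yields $h_{t_1}(b)\ge f(b)=0$, which rearranges into the slope inequality
\[
\frac{f(b)-f(t_1)}{b-t_1}\le\frac{h_{t_1}(b)-h_{t_1}(t_1)}{b-t_1}.
\]
As $t_1\nearrow b$, the boundary data of $h_{t_1}$ converge to those of $g$, and continuous dependence for the linear ODE (exactly the argument from the proof of Theorem~\ref{th:subsolutions_equivalence}) delivers $C^1$-convergence $h_{t_1}\to g$; passing to the limit produces $f'(b^-)\le g'(b)$, and the symmetric argument at $c$ gives $f'(c^+)\ge g'(c)$.

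For $(iii)\Rightarrow(ii)$ I would verify that $\tilde{f}$ is a distributional subsolution of \eqref{eq:K-convex} on $(a,d)$; then Theorem~\ref{th:subsolutions_equivalence} together with Remark~\ref{rem:subsolutions_equivalence_other_notions} promotes this to a Jensen subsolution on $[a,d]$. On each of the open pieces $(a,b)$, $(b,c)$, $(c,d)$ the restriction is already a subsolution, the middle piece being the exact solution $g$. Since $\tilde{f}$ is locally Lipschitz by Proposition~\ref{lem:almost_convexity_Jensen}, the only new contributions to the distribution $\tilde{f}''-K\tilde{f}-\lambda$ on $(a,d)$ come from the jumps of $\tilde{f}'$ at $b$ and $c$, producing Dirac masses with weights $g'(b)-f'(b^-)$ and $f'(c^+)-g'(c)$ respectively. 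Both are non-negative by the hypothesis of $(iii)$, so the full distribution is non-negative, as required. The supersolution case follows by replacing $f$ with $-f$ and $\lambda$ with $-\lambda$.

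The main obstacle is making the $C^1$-convergence $h_{t_1}\to g$ precise in $(i)\Rightarrow(iii)$; this is a standard continuous-dependence result for the constant-coefficient linear ODE \eqref{eq:K-convex} (alternatively one can read it off directly from \eqref{eq:K-convex:explicit_solutions}), but it needs to be invoked carefully in order to pass the slope inequality through the limit. Everything else is a matter of bookkeeping with results already built up in the paper.
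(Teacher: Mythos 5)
Your proposal is correct, and while the cycle $(ii)\Rightarrow(i)\Rightarrow(iii)\Rightarrow(ii)$ and the glued candidate $\tilde f$ coincide with the paper's strategy, two of the three steps are carried out differently. For $(i)\Rightarrow(iii)$ the paper re-runs the argument of Corollary \ref{cor:outer_jensen} to obtain $g\le f$ on $[\max(a,c-D_K),b]$ and then compares the difference quotients of $f$ and $g$ directly, whereas you compare $f$ with the two-point solutions $h_{t_1}$ coming from the Jensen inequality for the triples $(t_1,b,c)$ and pass to the limit $h_{t_1}\to g$; this is fine, since $c-b<D_K$ makes the limiting boundary value problem uniquely solvable and continuous dependence for the constant-coefficient equation \eqref{eq:K-convex} is immediate from \eqref{eq:K-convex:explicit_solutions}. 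The genuine divergence is in $(iii)\Rightarrow(ii)$: the paper verifies the Jensen condition for $\tilde f$ directly, using Proposition \ref{prop:splitting_jensen} to reduce to triples $(t_1,b,t_3)$ and $(t_1,c,t_3)$ and then constructing a supporting solution of the homogeneous equation at $b$ together with Lemma \ref{lem:kirchberger}; you instead show $\tilde f$ is a distributional subsolution on $(a,d)$, the only new contributions being the non-negative Dirac masses $\bigl(g'(b)-f'(b^-)\bigr)\delta_b$ and $\bigl(f'(c^+)-g'(c)\bigr)\delta_c$, and then invoke Theorem \ref{th:subsolutions_equivalence} and Remark \ref{rem:subsolutions_equivalence_other_notions}. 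This is a legitimate, arguably more standard route that trades the paper's bare-hands ODE comparison for measure-theoretic bookkeeping. One detail you should state more carefully: Proposition \ref{lem:almost_convexity_Jensen} gives local Lipschitzness only on the open pieces; at the junctions $b$ and $c$ you need finiteness of $f'(b^-)$ and $f'(c^+)$ (almost-convexity up to the endpoint gives $f'(b^-)>-\infty$ and $f'(c^+)<+\infty$, and the inequalities assumed in $(iii)$ give the other bounds), which is what makes $\tilde f'$ of locally bounded variation near $b,c$ and justifies the jump decomposition of $\tilde f''$.
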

\begin{proof}
(ii)$\Rightarrow$(i) is clear.

(i)$\Rightarrow$(iii): The proof of Corollary \ref{cor:outer_jensen} still works in the present setup and
shows that for $g$ as in (iii) we have $g\le f$ on $[\max(a,c-D_K),b]$ (as well as on $[c,\min(d,a+D_K)]$).
Therefore, 
\begin{equation*}
f'(b^-)=\lim_{t\nearrow b}\frac{f(b)-f(t)}{b-t}\leq\lim_{t\nearrow b}\frac{g(b)-g(t)}{b-t}=g'(b).
\end{equation*}
The inequality $f'(c_+) \geq g'(c)$ can be established in a similar
way.

(iii)$\Rightarrow$(ii): Taking $g$ as in (iii), we extend $f$ to $[a,d]$ by setting $f(t):=g(t)$ for $t\in(b,c)$. We now verify the Jensen condition for this extended function. 
By Proposition \ref{prop:splitting_jensen}, we only need to check three cases: $|t_1-t_3|<D_K$ and either $t_1<t_2<t_3$ are all contained in either $[a,b]$ or $[b,c]$ or $[c,d]$ (where it is automatically satisfied), or $(t_1,b,t_3)$ with $t_1\in (a,b)$ and $t_3\in (b,c)$, or, finally, $(t_1,c,t_3)$ with $t_1\in (b,c)$ and $t_3\in (c,d)$. 
By symmetry, we only need to treat $(t_1,b,t_3)$.
Thus let $h$ be the unique solution to \eqref{eq:K-convex} with $h(t_1)=f(t_1)$ and $h(t_3)=f(t_3)$. The claim then is
that $h(b)\ge f(b)$. 

If we can show that $f\ge g$ on $[t_1,b]$ we will be done: Indeed then $h(t_1)=f(t_1)\ge g(t_1)$ and $h(t_3)=g(t_3)$,
so Lemma \ref{lem:kirchberger} implies $h(b)\ge g(b) = f(b)$. So we are left with proving that $k:=f-g$, which
is a subsolution ITSJ of the homogeneous equation \eqref{eq:K-convex-homogeneous}, is non-negative on $[t_1,b]$.

We now construct a supporting solution for $k$ at $b$: Let $s_n \in (t_1,b)$, $s_n\nearrow b$ and denote 
by $l_n$ the unique solution
to \eqref{eq:K-convex-homogeneous} with $l_n(s_n)=k(s_n)$ and $l_n(b)=k(b)$. As in the proof of Theorem \ref{th:subsolutions_equivalence}, (iii)$\Rightarrow$(ii) it then follows that (up to picking a subsequence)
$l_n$ converges (in $C^2$) to the solution $l$ of the initial value problem to \eqref{eq:K-convex-homogeneous}
with 
\begin{align*}
   l(b) &=\lim_n l_n(b) =  k(b)\\
   l'(b) &=\lim_n l_n'(b).
\end{align*}
By the mean value theorem, $\frac{l_n(b) - l_n(s_n)}{b-s_n} = l_n'(\tilde s_n) \to l'(b)$ (where $\tilde s_n \in (s_n,b)$), which by construction implies that $l'(b) = k'(b^-)$. 
As in the proof of Theorem \ref{th:subsolutions_equivalence}, (iii)$\Rightarrow$(ii), it follows 
that $l$ is a supporting function for $k$ at $b$, i.e., $l\le k$ on $[t_1,b]$. Furthermore,
we have $l(b) = k(b) =0$ and $l'(b) = k'(b^-)\le 0$ by (iii), so
 \eqref{eq:K-convex:explicit_solutions} implies that $l\ge 0$ on
$[t_1,b]$. Thus, finally, $k\ge l \ge 0$, so $f \ge g$ on $[t_1,b]$, as claimed.
\end{proof}

We now have the necessary tools at hand to introduce a characterization of curvature bounds via a convexity/concavity property of $\tau$, see \cite[Theorems 8.23 \& 9.25]{AKP19}. 

\begin{defi}[Curvature bounds by convexity/concavity of $\tau$]
\label{def-cb-taucvx}
Let $X$ be a regular \LpLSn. An open subset $U$ is called a $(\geq K)$- (resp.\ $(\leq K)$-)comparison neighbourhood in the sense of the \emph{$\tau$-convexity (resp.\ $\tau$-concavity) condition} if:
\begin{enumerate}
\item $\tau$ is continuous on $(U\times U) \cap \tau^{-1}([0,D_K))$, and this set is open.
\item $U$ is $D_K$-geodesic. 
\item \label{def-cb-taucvx.main} Let $p\in U$ and let $\gamma:[a,d]\to U$ be a
timelike
$\tau$-arclength parametrized distance realizer\footnote{Any timelike distance realizer can be parametrized by $\tau$-arclength, but this parametrization need not be Lipschitz, cf.\ \cite[Corollary 3.35]{KS18}.} with $\tau(p,\gamma(d))<D_K$, $\tau(\gamma(a),p)<D_K$ and $\tau(\gamma(a),\gamma(d))=d-a<D_K$. We 
define the partial function on $[a,d]$
\begin{equation}\label{eq:mdTauTwoPartX}
    f(t)=
    \begin{cases}
    \md^K(\tau(p,\gamma(t))), \text{ if } p\leq\gamma(t)\\
    \md^K(\tau(\gamma(t),p)), \text{ if } \gamma(t)\leq p \, ,
    \end{cases}
\end{equation} 
(compare this with \eqref{eq:mdTauTwoPart}, but note that here $\tau$ denotes the time separation in $X$). If $f$ is not defined on a closed subset, extend it by setting it equal to $0$ on the boundary of its domain. We require
\begin{equation}
f''-Kf\geq 1  \quad \text{ (resp.\ } f''-Kf\leq 1 \text{)} \, ,
\end{equation}
i.e., $f$ is a subsolution (resp.\ supersolution) in the sense of any of the equivalent formulations established in Proposition \ref{pop: split domain equivalences} and Remark \ref{rem:subsolutions_equivalence_other_notions}. 
\end{enumerate}
\end{defi}

\begin{rem}[Domain of $f$]
Define $b=\sup\{t\in[a,d]:\gamma(t)\leq p\}$, $c=\inf\{t\in[a,d]:p\leq\gamma(t)\}$ (if the respective set is nonemtpy).
Then the function $f$ defined in \eqref{eq:mdTauTwoPartX} has the domain:
$[a,b]\cup [c,d]$ if both $b,c$ are defined, $[a,b]$ if $c$ is not defined, $[c,d]$ if $b$ is not defined, and $\emptyset$ if neither are defined. 
Without the extension of $f$ and if $U$ is not causally closed, the points $b,c$ may be missing from these sets.
\end{rem}

\begin{pop}[Triangle comparison and $\tau$- convexity (resp.\ concavity) condition are equivalent]
\label{pop: triangle comparison and tau convex are equivalent}
Let $U$ be an open subset in a regular \LpLS $X$. 
Then $U$ is a $(\geq K)$- (resp.\ $(\leq K)$-)comparison neighbourhood in the sense of one-sided timelike triangle comparison if and only if it is a $(\geq K)$ (resp.\ $(\leq K)$) -comparison neighbourhood in the sense of the $\tau$-convexity (resp.\ $\tau$-concavity) condition. 
\end{pop}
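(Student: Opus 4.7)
The plan is to exploit Lemma \ref{lem:modified_distance_in_model_spaces}: along any $\tau$-unit speed timelike geodesic $\bar\gamma$ in $\lm{K}$, the partial function from \eqref{eq:mdTauTwoPart} is a \emph{true} solution of $f''-Kf=1$. By strict monotonicity of $\md^K$ on $[0,D_K)$, the one-sided $\tau$-inequality at an interior point of the ``long side'' of a comparison triangle then translates directly into the Jensen subsolution/supersolution inequality for the function \eqref{eq:mdTauTwoPartX} built from $\tau$ along the corresponding realizer in $X$, and vice versa. The rest of the proof is bookkeeping organized around this correspondence.

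For the implication $\tau$-convexity (resp.\ $\tau$-concavity) $\Rightarrow$ one-sided timelike triangle comparison, take a timelike triangle $\Delta(x,y,z)\subseteq U$ satisfying size bounds, a point $p$ on one of its sides, and the opposite vertex $v$. Parametrize the side containing $p$ by $\tau$-arclength as $\gamma\colon[0,L]\to U$ with $L<D_K$, and apply the $\tau$-convexity (resp.\ $\tau$-concavity) condition with reference point $v$, producing the function $f$ of \eqref{eq:mdTauTwoPartX}. By Proposition \ref{pop: split domain equivalences} together with Remark \ref{rem:subsolutions_equivalence_other_notions}, $f$ extends to a sub- (resp.\ super-) solution $F$ of \eqref{eq:convexity_with_lambda1} on the full interval $[0,L]$. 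In the comparison triangle $\Delta(\bar x,\bar y,\bar z)\subseteq\lm{K}$, the analogous function $\bar f$ along the $\tau$-arclength parametrized side $[\bar x,\bar z]$ is, by Lemma \ref{lem:modified_distance_in_model_spaces}, a genuine solution of $f''-Kf=1$ on $[0,L]$; matching of side lengths gives $F(0)=\bar f(0)$ and $F(L)=\bar f(L)$. The Jensen inequality applied to the triple $(0,s_0,L)$ with $p=\gamma(s_0)$ yields $F(s_0)\leq\bar f(s_0)$ (resp.\ $\geq$), and monotonicity of $\md^K$ converts this into the desired $\tau(p,v)\leq\tau(\bar p,\bar v)$ (resp.\ $\geq$) whenever $p,v$ are causally related; the reverse causal order is symmetric and the no-relation case is vacuous.

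For the converse, fix $p\in U$ and a $\tau$-arclength parametrized timelike distance realizer $\gamma\colon[a,d]\to U$ meeting the size bounds of Definition \ref{def-cb-taucvx}(iii). By Proposition \ref{pop: split domain equivalences}, it suffices to verify the Jensen inequality for $f$ from \eqref{eq:mdTauTwoPartX} at every triple $t_1<t_2<t_3$ in the (possibly split) domain of $f$ with $|t_1-t_3|<D_K$. For any such triple, the points $\gamma(t_1),\gamma(t_3),p$ form a (generically timelike, in degenerate cases merely admissible causal) triangle in which $\gamma(t_2)$ lies on the timelike side $[\gamma(t_1),\gamma(t_3)]$ opposite the vertex $p$. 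One-sided timelike triangle comparison (complemented by a limiting argument obtained by slightly shortening $\gamma$ when the triangle is only admissible causal) gives both $\tau(\gamma(t_2),p)\leq\tau(\bar\gamma(t_2),\bar p)$ and $\tau(p,\gamma(t_2))\leq\tau(\bar p,\bar\gamma(t_2))$ (resp.\ with inequalities reversed), so composing with $\md^K$ yields $f(t_2)\leq\bar f(t_2)$ (resp.\ $\geq$). Since by Lemma \ref{lem:modified_distance_in_model_spaces} the restriction of $\bar f$ to $[t_1,t_3]$ is the unique solution of $f''-Kf=1$ with the correct boundary values, this is precisely the Jensen inequality.

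The main obstacle I expect is the bookkeeping forced by the split domain $[a,b]\cup[c,d]$ of $f$ together with the zero-extension on the boundary where causal relations degenerate. Triples straddling the gap, or containing a boundary point at which $\tau=0$ in $X$ while the model-space comparison value $\bar f$ may a priori be strictly positive, need to be treated individually; in each such case the desired sign in Jensen is salvaged by noting that one-sided comparison forces $\bar f\geq 0$ (resp.\ $\bar f=0$) at the relevant boundary point, and Proposition \ref{pop: split domain equivalences} provides exactly the glue needed to combine the two subintervals into a bona fide (sub/super)solution on the full interval required by the forward direction.
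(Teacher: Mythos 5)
Your core mechanism is the right one and coincides with the paper's ``Case 1'': along the side of the comparison triangle the function $\md^K\circ\tau(\bar p,\cdot)$ is a genuine solution of \eqref{eq:convexity_with_lambda1} by Lemma \ref{lem:modified_distance_in_model_spaces}, so where the causal relations in $X$ and in $\lm{K}$ point the same way, strict monotonicity of $\md^K$ turns the Jensen inequality into the one-sided $\tau$-inequality and back, with Proposition \ref{pop: split domain equivalences} doing the gluing across the split domain. But the real work in this equivalence is precisely the set of configurations in which the causal relations in $X$ and in the model space do \emph{not} match, and your proposal either dismisses these or defers them with a claim that is not an argument. First, in the direction ``$\tau$-concavity $\Rightarrow$ comparison'' (upper bounds), the ``no-relation case'' is \emph{not} vacuous: one-sided comparison demands $\tau(p,v)\geq\tau(\bar p,\bar v)$ even when $p,v$ are causally unrelated in $X$, so if $\bar p\ll\bar v$ in $\lm{K}$ the inequality fails and one must show this configuration is impossible under the concavity hypothesis. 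Since $f$ is simply undefined (or extended by $0$) at that parameter, no Jensen inequality applies there directly; the paper rules the case out by moving along $\gamma$ to the infimal parameter $t'$ at which a causal relation to $p$ first appears and contradicting the Jensen supersolution inequality at parameters just above $t'$. Your sentence ``the desired sign in Jensen is salvaged by noting that one-sided comparison forces $\bar f\geq 0$'' does not describe such an argument (and in this direction the comparison inequality is the conclusion, not a usable hypothesis).

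Second, the \emph{crossed} configuration $p\leq q$ in $X$ but $\bar q\leq\bar p$ in $\lm{K}$ (or vice versa) is not covered by ``the reverse causal order is symmetric'': there $f(t_2)$ is built from $\tau(p,q)$ while the model-space value is built from $\tau(\bar q,\bar p)$, so composing with $\md^K$ relates the wrong pair of quantities, and for lower bounds the required inequality $\tau(q,p)\leq\tau(\bar q,\bar p)=0$ forces one to show the configuration cannot occur at all; the paper needs a dedicated argument (its Case 6), producing an intermediate parameter $\tilde t_2$ at which both relations point the same way and the Case-1 correspondence is violated. Third, in the direction ``comparison $\Rightarrow$ convexity'' for lower bounds, your step ``composing with $\md^K$ yields $f(t_2)\leq\bar f(t_2)$'' silently assumes $\bar p\leq\bar q$ whenever $p\leq q$: if $p\leq q$ is null-related but $\bar p,\bar q$ are spacelike-related, the comparison solution $g$ is \emph{negative} at $t_2$ while $f(t_2)=0$, so the Jensen subsolution inequality would fail; excluding this requires the implication $p\leq q\Rightarrow\bar p\leq\bar q$, i.e.\ the passage through strict causal triangle comparison (Theorem \ref{thm:equivCausTl}(ii)), which your argument never invokes. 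Until these mismatched-causality cases are treated, the proof is incomplete in both directions.
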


\begin{proof} Since $X$ is assumed to be regular, the first two conditions in Definitions \ref{def-cb-tr} and \ref{def-cb-taucvx} agree. 
It is left to check the third condition. We will do this for lower curvature bounds and mention where the case of upper curvature bounds is not analogous. 

Below, when considering the convexity condition, we take a point $p \in U$ and a timelike distance realizer $\gamma$ in $\tau$-unit speed parametrization. 
Define the partial function $f$ as required, without the extension (cf.\ \eqref{eq:mdTauTwoPartX}). 
Note that the extension of $f$ is still continuous and, by a limit argument, the extended $f$ is a subsolution (resp.\ supersolution) ITSJ if and only if $f$ before the extension was a subsolution (resp.\ supersolution) ITSJ. 
We then want to check that $f$ is a subsolution of 
\eqref{eq:K-convex} ITSJ with $\lambda=1$ by showing that it satisfies the Jensen subsolution inequality for any parameters $t_1<t_2<t_3$ with $|t_1-t_3|<D_K$, cf.\ Remark \ref{rem: reformulation subsolution supersolution jensen}. 
Take $t_1<t_2<t_3$ in the domain of $\gamma$ with $|t_1-t_3|<D_K$. 
We set $x=\gamma(t_1)$, $q=\gamma(t_2)$ and $y=\gamma(t_3)$, and assume that $x$ and $y$ are causally related to $p$. 

On the other hand, when considering one-sided triangle comparison, we will let $\Delta(x,p,y)$ form a timelike triangle (in any possible permutation), and let $q$ be a point on the side $[x,y]$ realized by $\gamma$ in $\tau$-unit speed. 
Let $x=\gamma(t_1)$, $q = \gamma(t_2)$ and $y=\gamma(t_3)$. 
Note that choosing $q$ in the interior of $[x,y]$ is not a real restriction since otherwise we would have trivial equality of the $\tau$-lengths in $X$ and $\lm{K}$.  

Now we are ready to establish both directions simultaneously. 

\emph{Case 1:} 
First assume that the parameters are suitable for both timelike triangle comparison as well the Jensen inequality. 
More precisely, assume that $x,y$ are timelike related to $p$ and $p\leq q$ (in which direction the points are related is not important). 
In particular, $t_1,t_2,t_3$ all lie inside the domain of $f$. 
Then we can consider both the one-sided triangle comparison for $q$ in $\Delta(p,x,y)$ and the Jensen inequality for the parameters $t_1<t_2<t_3$, and we claim that they are equivalent.

For both triangle comparison and Jensen inequality, we take a comparison situation $\Delta(\bar{p},\bar{x},\bar{y})$ and a comparison point $\bar{q}$. In the present case 1, we additionally assume $\bar{p}\leq\bar{q}$. 
Define $\bar{\gamma}$ as the side $[\bar{x}, \bar{y}]$ in $\tau$-unit speed parametrization such that $\bar{\gamma}(t_1)=\bar{x}$, then $\bar{\gamma}(t_2)=\bar{q}$ and $\bar{\gamma}(t_3)=\bar{y}$. 

We set
\[
\bar{f}(t):=\begin{cases}
    \md^K(\tau(\bar p,\bar{\gamma}(t))), & \text{ if } \bar{p} \leq \bar{\gamma}(t) \, , \\
    \md^K(\tau(\bar{\gamma}(t), \bar p)), & \text{ if } \bar{\gamma}(t) \leq \bar{p} \, .
    \end{cases}
\]
By Lemma \ref{lem:modified_distance_in_model_spaces},  $\bar{f}$ is a solution of \eqref{eq:convexity_with_lambda1} and is given by $\md^K+B\cn^K+C\sn^K$ where both are defined, which is precisely where the latter is non-negative. 
Extend $\bar{f}$ to $g$, given by that formula. We have that $f(t_1)=\bar{f}(t_1)$ and $f(t_3)=\bar{f}(t_3)$, so $g$ is the solution of \eqref{eq:convexity_with_lambda1}  with $f(t_1)=g(t_1)$ and $f(t_3)=g(t_3)$.

Now one-sided triangle comparison from below precisely says that $\tau(p,q)\leq\tau(\bar{p},\bar{q})$. 
The case of $\tau(q,p)=0\leq\tau(\bar{q},\bar{p})=0$ is automatic: indeed, by push-up and chronology, $p \leq q$ implies $q \not\ll p$ (and similarly in the model space). 
As $\md^K$ is strictly monotonically increasing on $[0,D_K]$, this is equivalent to 
\begin{equation}
f(t_2) = \md^K(\tau(p,q)) \leq \md^K(\tau(\bar{p},\bar{q})) = \bar{f}(t_2) \, ,
\end{equation}
which is the Jensen subsolution inequality for $t_1<t_2<t_3$. 
The curvature bounded above case follows analogously, concluding Case 1.

For all of the remaining cases, note that the logic is more subtle: we aim to prove that all Jensen inequalities imply the desired curvature bound inequality, and that all curvature bound inequalities imply the desired Jensen inequality.

Before we continue, note that we can restrict to admissible causal triangles in the Jensen inequality. 
Indeed, it cannot be that both $x,y$ are not timelike related to $p$: as $t_1<t_3$, this could only be the case if we have $x\leq p\leq y$ all null related, but then $p,q$ are causally unrelated, contrary to our assumption in the present case. 
In all of the following cases, we will therefore assume that $x, y$ and $p$ form an admissible causal triangle. 

\emph{Case 2:} We can extend Case 1 to admissible causal triangles. Thus, let $x,y,q$ be causally related to $p$ and not both $x$ and $y$ timelike related to $p$ as well as $\bp$ and $\bq$ causally related (in the same direction as $p$ and $q$), then timelike triangle comparison does not make sense immediately. 
If only one of $x,y$ is not timelike related, by varying $p$ and $q$, one can reduce this to Case 1 by a limiting procedure as in the second paragraph in Proposition \ref{prop:4-point_causal_noncausal_equivalent}. 

We have now dealt with all cases where, up to symmetry, $p \leq q$ and $\bp \leq \bq$. 

Furthermore, we have to consider curvature bounds above and below separately. 

\emph{Case 3}: Let now $p\leq q$ and $\bar{p},\bar{q}$ be causally unrelated. 
Then we automatically have $0\leq f(t_2)$ and $0>g(t_2)$. 
In the case of curvature bounds below, we use the main argument in the equivalence of strict causal curvature bounds (Theorem  \ref{thm:equivCausTl} (ii)). 
As the implication $p\leq q\Rightarrow \bar{p}\leq\bar{q}$ does not hold, this contradicts strict causal triangle comparison, which we know to be equivalent to one-sided timelike triangle comparison by Proposition \ref{rem: one-sided triangle comparison and triangle comparison are equivalent} and Theorem \ref{thm:equivCausTl}, thus also some curvature bound inequality for $\tau$ fails. 
For the Jensen subsolution inequality, $f(t_2)\leq g(t_2)$ is violated, making these match. 

Triangle comparison above is automatically satisfied. For the Jensen supersolution inequality, $f(t_2)\geq g(t_2)$ is automatically satisfied, making these match as well.

\emph{Case 4}: Let now $p,q$ be causally unrelated and $\bar{p}\leq \bar{q}$. In any curvature bound, $f(t_2)$ is not defined, so this does not correspond to a Jensen subsolution (resp.\ supersolution) inequality. 
Note that triangle comparison from below is automatically satisfied. 
Triangle comparison from above does not hold if and only if $\bar{p}\ll \bar{q}$, so we restrict to that case, seeking a contradiction. 
As $\bp\leq\bq\ll\by$, we also infer that $p\ll y$. 
Consider $q_t=\gamma(t)$ and $\bar{q}_t=\bar{\gamma}(t)$ for $t>t_2$, then $\bar{p}\ll \bar{q}_t$. 
Note that for large enough $t<t_3$, we do have $p\leq q_t$. 
Set $t'$ to be the infimum of these $t$, then 
\begin{equation}
\lim_{t\searrow t'}\tau(p,q_t) = 0 < \tau(\bar{p},\bar{q}) \leq \tau(\bar{p},\bar{q}_{t'}) = \lim_{t\searrow t'}\tau(\bar{p},\bar{q}_t) \, ,    
\end{equation}
where both limits exist since their arguments are monotonically increasing in $t$. If $\lim_{t\searrow t'}\tau(p,q_t)$ were positive we would have $p \ll q_t'$, contradicting the fact that $t'$ was an infimum. 
This in turn contradicts the Jensen supersolution inequality for $t$ close enough to $t'$, concluding this case. 

\emph{Case 5}: Let now $p,q$ be causally unrelated and $\bar{p},\bar{q}$ causally unrelated too. Then $f(t_2)$ is undefined, so this does not correspond to a Jensen subsolution (resp.\ supersolution) inequality. For triangle comparison, $\tau(p,q)=\tau(\bar{p},\bar{q})$, so this instance of triangle comparison below (resp.\ above) is automatically satisfied.

\emph{Case 6}: Let $p\leq q$ and $\bar{p}\geq\bar{q}$, i.e.\ the two sides of the Jensen inequality are in different settings with respect to our case distinction. 
As in Case 4, we know that $p\leq q\ll y$ and $\bar{x}\ll\bar{q}\leq\bar{p}$, so $x\ll p\ll y$. 
We claim that we can find a parameter $\tilde{t}_2$ suitable for Case 1 violating the required triangle comparison, thus Case 1 shows that the corresponding Jensen inequality does not hold.  
In other words, under the assumptions of curvature bounds (in the sense of triangle comparison or convexity/concavity), this case cannot occur. 
Set $q_t=\gamma(t)$, then the corresponding point is $\bar{q}_t=\bar{\gamma}(t)$. 
For curvature bounds below, we define the functions $a(t)=\tau(p,\gamma(t))$ and $\bar{a}(t)=\tau(\bar{p},\bar{\gamma}(t))$. 
Then we know that both $a$ and $\bar{a}$ are increasing, they both attain $0$ and some positive value, and for $\varepsilon>0$ small enough, we have that $a(t_2+\varepsilon)>0$ but $\bar{a}(t_2+\varepsilon)=0$ (if the second was positive for all $\varepsilon>0$, we would have $\bar{p}\leq\bar{q}$ as well, so we can apply Case 1 directly). 
Let $\varepsilon' \coloneqq \sup \{ \varepsilon \mid \bar{a}(t_2 + \varepsilon)=0\}$. Then $a(t_2 + \varepsilon') > 0, \bar{a}(t_2 + \varepsilon')=0$. 
This allows us to find a value $\tilde{t}_2$ such that $a(\tilde{t}_2)>\bar{a}(\tilde{t}_2)>0$. 
This violates triangle comparison from below and makes $(t_1,\tilde{t}_2,t_3)$ parameters suitable for Case $1$, thus also the Jensen subsolution inequality fails. 

For curvature bounds above the argument is analogous, using the functions $a(t)=\tau(\gamma(t),p)$ and $\bar{a}(t)=\tau(\bar{\gamma}(t),\bar{p})$ instead.

\emph{Case 7}: The case of $p \geq q$ as well as the case of $p$ and $q$ being causally unrelated and $\bp \geq \bq$ are symmetric with respect to time-orientation.
\end{proof}

\section{Equivalences among curvature bounds}
Here we collect the various interdependencies between synthetic sectional curvature bounds that have been established in the previous sections into the following main result:

\begin{thm}[Equivalent notions of curvature bounds for Lorentzian pre-length spaces]
\label{thm:allEquiv}
Let $X$ be a chronological\footnote{We assume all spaces to be chronological from the onset, but wanted to emphasize this in the main theorem.} \LpLSn. 
Recall that $X$ may have curvature bounded below (resp.\ above) by $K$ in any of the following senses: 
\begin{enumerate}
    \item \label{Timelike triangle comparison} Timelike triangle comparison 
    \item \label{One-sided triangle comparison} One-sided timelike triangle comparison
    \item \label{Causal triangle comparison} Causal triangle comparison
    \item \label{One-sided causal triangle comparison} One-sided causal triangle comparison
    \item \label{Strict causal triangle comparison} Strict causal triangle comparison
    \item \label{One-sided strict causal triangle comparison} One-sided strict causal triangle comparison
    \item \label{Monotonicity comparison} Monotonicity comparison
    \item \label{One-sided monotonicity comparison} One-sided monotonicity comparison
    \item \label{Angle comparison} Angle comparison 
    \item \label{Hinge comparison} Hinge comparison 
    \item \label{Timelike four point condition} Timelike four point condition 
    \item \label{Angle version of timelike four point condition} Angle version of timelike four point condition 
    \item \label{Causal four point condition} Causal four point condition 
    \item \label{Strict causal four-point condition} Strict causal four-point condition 
    \item \label{tau-convexity (resp. concavity) condition} $\tau$-convexity (resp.\ $\tau$-concavity) condition 
\end{enumerate} 
In general, the following relations between these curvature bounds hold: 
\begin{equation*}
\ref{Timelike triangle comparison} \Leftrightarrow 
\ref{One-sided triangle comparison} \Leftrightarrow 
\ref{One-sided causal triangle comparison} \Leftrightarrow 
\ref{Causal triangle comparison} \Leftrightarrow 
\ref{Strict causal triangle comparison} \Leftrightarrow
\ref{One-sided strict causal triangle comparison} \text{, and } 
\ref{Timelike four point condition} \Leftrightarrow \ref{Angle version of timelike four point condition} \, ,    
\end{equation*} 
where in the case of upper curvature bounds for $\ref{Causal triangle comparison} \Leftrightarrow \ref{Strict causal triangle comparison}\Leftrightarrow\ref{One-sided strict causal triangle comparison}$ one needs to additionally assume that $X$ is strongly causal and locally causally closed. 

If $X$ is regular, we additionally have: 
\begin{equation*}
\ref{tau-convexity (resp. concavity) condition} 
\Leftrightarrow \ref{Timelike triangle comparison} \Leftrightarrow  \ref{One-sided monotonicity comparison}\Leftrightarrow \ref{Monotonicity comparison} \Rightarrow \ref{Angle comparison} \Leftrightarrow \ref{Hinge comparison} \Rightarrow \ref{Timelike four point condition} \, ,
\end{equation*}
where in the case of lower curvature bounds for $\ref{Monotonicity comparison} \Rightarrow \ref{Angle comparison}$ one needs to additionally assume that $X$ satisfies \eqref{eq: triangle inequality of angles lower curvature bounds}. 

Finally, if $X$ is strongly causal, regular, and locally $D_K$-geodesic, and in the case of upper curvature bounds additionally is locally causally closed and in the case of lower curvature bounds satisfies \eqref{eq: triangle inequality of angles lower curvature bounds}, then all aforementioned notions of curvature bounds are equivalent. 
All relations between the curvature bounds are depicted Figure \ref{fig: main theorem}. 
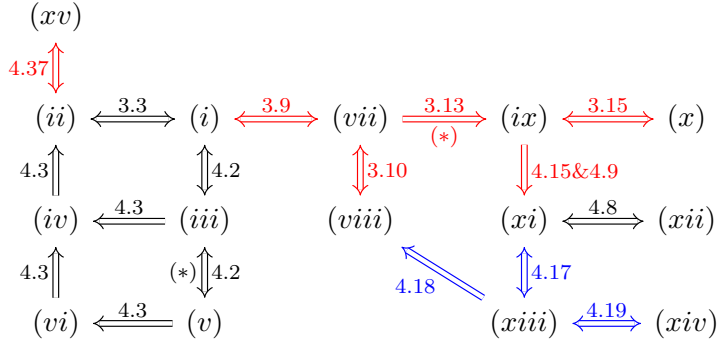
\begin{figure}
\begin{center}
\begin{tikzcd}

\ref{tau-convexity (resp. concavity) condition}\arrow[d, Leftrightarrow, color=red, "\ref{pop: triangle comparison and tau convex are equivalent}"'] 
& 
& \\

\ref{One-sided triangle comparison} \arrow[r, Leftrightarrow, "\ref{rem: one-sided triangle comparison and triangle comparison are equivalent}"]& \ref{Timelike triangle comparison}
\arrow[r, Leftrightarrow, color=red, "\ref{pop-eqiv-mon-tr}"] 
\arrow[d, Leftrightarrow, "\ref{thm:equivCausTl}"] 
& \ref{Monotonicity comparison} 
\arrow[r, Rightarrow, color=red, "\ref{pop-implication-mon-->angle}", "(*)"'] 
\arrow[d, Leftrightarrow, color=red, "\ref{rem-cb-one-sided-mon}"]
& \ref{Angle comparison} 
\arrow[r, Leftrightarrow, color=red, "\ref{prop:equivalence_hinge_angle}"]
& \ref{Hinge comparison} \\

\ref{One-sided causal triangle comparison}\arrow[r, Leftarrow,  "\ref{rem: one sided strict causal}"]\arrow[u, Rightarrow,  "\ref{rem: one sided strict causal}"] & \ref{Causal triangle comparison}
\arrow[d, Leftrightarrow, "\ref{thm:equivCausTl}","(*)"'] 
& \ref{One-sided monotonicity comparison} 
\arrow[rd, Leftarrow, color=blue, "\ref{pop: four point condition implies monotonicity comparison}"'] 
& \ref{Timelike four point condition} 
\arrow[u, Leftarrow, color=red, "\ref{pop: angle comparison implies four-point condition CBA} \& \ref{pop: angle comparison implies four-point condition CBB}"'] 
\arrow[r, Leftrightarrow, "\ref{lem-angle version 4pt}"] 
& \ref{Angle version of timelike four point condition} \\

\ref{One-sided strict causal triangle comparison}\arrow[r, Leftarrow,  "\ref{rem: one sided strict causal}"]\arrow[u, Rightarrow,  "\ref{rem: one sided strict causal}"] 
& \ref{Strict causal triangle comparison} 
& & \ref{Causal four point condition} 
\arrow[u, Leftrightarrow, color=blue, "\ref{pop: timelike vs causal 4pt condition}"'] 
\arrow[r, Leftrightarrow, color=blue, "\ref{prop:4-point_causal_noncausal_equivalent}"] 
& \ref{Strict causal four-point condition}
\end{tikzcd}
\end{center}
\caption{All relations between different formulations of curvature bounds for \LpLSn. Black arrows are always valid, red arrows require $X$ to be regular, and blue arrows require $X$ to be strongly causal, regular and locally $D_K$-geodesic. The two instances of additional assumptions for one direction of curvature bounds are marked by $(*)$ in the figure.}
\label{fig: main theorem}
\end{figure}
\end{thm}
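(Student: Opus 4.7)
The proof is essentially a careful bookkeeping exercise: the theorem collects equivalences already established piecewise throughout the paper, so the plan is to verify each arrow of Figure~\ref{fig: main theorem} by invoking the cited result, and then to assemble these arrows into the stated implication chains. No new mathematical argument is needed beyond assembling the previously established implications in the correct logical order and tracking the hypotheses each step requires.

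First I would handle the chain of triangle-comparison equivalences $\ref{Timelike triangle comparison}\Leftrightarrow\ref{One-sided triangle comparison}\Leftrightarrow\ref{One-sided causal triangle comparison}\Leftrightarrow\ref{Causal triangle comparison}\Leftrightarrow\ref{Strict causal triangle comparison}\Leftrightarrow\ref{One-sided strict causal triangle comparison}$, which requires no regularity assumption: the passage between one-sided and two-sided versions uses Proposition~\ref{rem: one-sided triangle comparison and triangle comparison are equivalent} and Remark~\ref{rem: one sided strict causal}, while the passage between timelike, causal, and strict causal is given by Theorem~\ref{thm:equivCausTl} (where the extra hypotheses of strong causality and local causal closure appear only in the upper-bound direction to strict causal triangle comparison). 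The equivalence $\ref{Timelike four point condition}\Leftrightarrow\ref{Angle version of timelike four point condition}$ is immediate from Lemma~\ref{lem-angle version 4pt} and its upper-curvature counterpart.

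Next, assuming regularity, I would establish the enriched chain $\ref{tau-convexity (resp. concavity) condition}\Leftrightarrow\ref{One-sided triangle comparison}\Leftrightarrow\ref{Timelike triangle comparison}\Leftrightarrow\ref{Monotonicity comparison}\Leftrightarrow\ref{One-sided monotonicity comparison}\Rightarrow\ref{Angle comparison}\Leftrightarrow\ref{Hinge comparison}\Rightarrow\ref{Timelike four point condition}$ by successively citing Proposition~\ref{pop: triangle comparison and tau convex are equivalent}, Proposition~\ref{pop-eqiv-mon-tr}, Remark~\ref{rem-cb-one-sided-mon}, Proposition~\ref{pop-implication-mon-->angle} (where in the lower-bound case the condition \eqref{eq: triangle inequality of angles lower curvature bounds} is imported), Proposition~\ref{prop:equivalence_hinge_angle}, and Propositions~\ref{pop: angle comparison implies four-point condition CBB} and~\ref{pop: angle comparison implies four-point condition CBA} for the two sign conventions. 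Care has to be taken that the assumption of regularity is indeed in force for each step; this is clear from the individual statements.

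Finally, to close the loop under the full set of assumptions (strong causality, regularity, local $D_K$-geodesic, plus locally causally closed in the upper-bound case and \eqref{eq: triangle inequality of angles lower curvature bounds} in the lower-bound case), I would connect the four-point conditions back to monotonicity comparison using Propositions~\ref{pop: timelike vs causal 4pt condition} and~\ref{prop:4-point_causal_noncausal_equivalent} (yielding $\ref{Timelike four point condition}\Leftrightarrow\ref{Causal four point condition}\Leftrightarrow\ref{Strict causal four-point condition}$), and then applying Proposition~\ref{pop: four point condition implies monotonicity comparison} to obtain $\ref{Causal four point condition}\Rightarrow\ref{Monotonicity comparison}$, thereby completing the cycle. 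The chief obstacle is not mathematical depth but hypothesis-tracking: several of the cited implications require different combinations of regularity, strong causality, local causal closure, local $D_K$-geodesic-ness, or the partial triangle inequality \eqref{eq: triangle inequality of angles lower curvature bounds}, and one must verify that the global hypotheses placed on $X$ in each part of the theorem are sufficient to legitimise every arrow used in the corresponding cycle, especially the delicate step $\ref{Causal four point condition}\Rightarrow\ref{Monotonicity comparison}$ which propagates all the assumptions simultaneously.
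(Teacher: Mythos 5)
Your proposal is correct and matches the paper exactly: the theorem is proved there precisely as a compilation of the previously established results, with each arrow of Figure \ref{fig: main theorem} justified by the cited proposition and the hypotheses (regularity, strong causality, local causal closure, local $D_K$-geodesy, and \eqref{eq: triangle inequality of angles lower curvature bounds}) tracked per arrow as you describe. Your assembly of the chains and the closing of the cycle via Propositions \ref{pop: timelike vs causal 4pt condition}, \ref{prop:4-point_causal_noncausal_equivalent} and \ref{pop: four point condition implies monotonicity comparison} is exactly the paper's argument.
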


\section{Implications of curvature bounds}
In this final section, we prove two implications of upper curvature bounds in the spirit of \cite[Proposition II.2.2, Exercise II.2.3]{BH99}. 
On the one hand, we infer that $\tau$ is bi-concave for curvature bounded above by $0$, and on the other hand we get that $\tau(p, \cdot)$ (resp.\ $\tau(\cdot, p)$) is concave for arbitrary upper curvature bounds. 

\subsection{Bi-concavity of \texorpdfstring{$\tau$}{tau}}
The bi-concavity of $\tau$ for spaces with curvature bounded above by $0$ essentially follows by the intercept theorem in the Minkowski plane. 
\begin{pop}[Bi-concavity of $\tau$]
\label{pop: biconcave}
    Let $X$ be a \LpLS with curvature bounded above by $0$ (in the sense of strict causal triangle comparison). 
    Let $U$ be a comparison neighbourhood in $X$. 
    Then $\tau|_{(U \times U)}$ is ``timelike bi-concave'', i.e., for any two constant speed parametrized timelike distance realizers $\alpha, \beta : [0,1] \to X$ with the same time orientation such that $\alpha(1)\leq\beta(1)$ and $\alpha(0)\leq\beta(0)$ we have 
    \begin{equation}
    \label{eq: bi concavity}
        \tau(\alpha(t),\beta(t)) \geq t \tau(\alpha(1),\beta(1)) + (1-t)\tau(\alpha(0), \beta(0)) \, .
    \end{equation} 
\end{pop}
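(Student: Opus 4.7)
I would set $A = \alpha(0)$, $B = \alpha(1)$, $C = \beta(1)$, $D = \beta(0)$. Push-up together with the hypotheses yields $A \ll B$, $D \ll C$, $A \leq D$, $B \leq C$, and in particular $A \ll B \leq C$, so $A \ll C$. As $K = 0$ forces $D_K = \infty$ and $U$ is $D_K$-geodesic, a timelike distance realizer $\gamma \colon [0,1] \to U$ from $A$ to $C$ exists; I parametrize it at constant speed. The configuration splits into two admissible causal triangles $\Delta_1 = \Delta(A,B,C)$ and $\Delta_2 = \Delta(A,D,C)$, sharing the timelike diagonal $[A,C]$.

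I then take comparison triangles $\bar\Delta_1$ and $\bar\Delta_2$ in the Minkowski plane $\lm{0}$ and glue them along the common side $[\bar A, \bar C]$ on opposite sides. Writing the comparison points of $\alpha(t)$, $\gamma(t)$, $\beta(t)$ in Minkowski coordinates,
\begin{equation*}
\bar\alpha(t) = \bar A + t(\bar B - \bar A), \quad \bar\gamma(t) = \bar A + t(\bar C - \bar A), \quad \bar\beta(t) = \bar D + t(\bar C - \bar D),
\end{equation*}
the intercept theorem yields $\bar\gamma(t) - \bar\alpha(t) = t(\bar C - \bar B)$ and $\bar\beta(t) - \bar\gamma(t) = (1-t)(\bar D - \bar A)$, both future causal. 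Consequently $\tau(\bar\alpha(t), \bar\gamma(t)) = t\, \tau(B,C)$, $\tau(\bar\gamma(t), \bar\beta(t)) = (1-t)\, \tau(A,D)$, and $\bar\alpha(t) \leq \bar\gamma(t) \leq \bar\beta(t)$ in $\lm{0}$.

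Applying upper-curvature triangle comparison inside $\Delta_1$ and $\Delta_2$ (cf.\ Definition \ref{def-cb-tr}) now gives $\tau(\alpha(t), \gamma(t)) \geq t\, \tau(B,C)$ and $\tau(\gamma(t), \beta(t)) \geq (1-t)\, \tau(A,D)$. Since the bound is assumed in the strict causal sense, the Minkowski causal chain transfers to $X$ (Theorem \ref{thm:equivCausTl}): $\alpha(t) \leq \gamma(t) \leq \beta(t)$. The reverse triangle inequality in $X$ then produces
\begin{equation*}
\tau(\alpha(t), \beta(t)) \geq \tau(\alpha(t), \gamma(t)) + \tau(\gamma(t), \beta(t)) \geq t\, \tau(\alpha(1), \beta(1)) + (1-t)\, \tau(\alpha(0), \beta(0)),
\end{equation*}
which is precisely \eqref{eq: bi concavity}.

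The main obstacle lies in handling degeneracies, especially when vertices coincide (e.g.\ $A = D$ or $B = C$) or when $A \leq D$ or $B \leq C$ is only null, causing one of the comparison triangles to collapse. In these cases the corresponding term on the right-hand side of \eqref{eq: bi concavity} vanishes and the argument either reduces to triangle comparison in a single non-degenerate triangle or to the trivial estimate $\tau \geq 0$; a brief case analysis suffices and does not affect the overall strategy.
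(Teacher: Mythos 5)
Your proposal is correct and takes essentially the same route as the paper: insert the diagonal realizer $\gamma$ from $\alpha(0)$ to $\beta(1)$, compare the two resulting admissible causal triangles with Minkowski comparison triangles via the intercept theorem, use the strictness of the causal comparison to transfer the chain $\alpha(t)\leq\gamma(t)\leq\beta(t)$ to $X$, and finish with the reverse triangle inequality. The only organizational difference is that the paper first proves the common-vertex special case $\alpha(0)=\beta(0)$ and then applies it twice to the two triangles along the diagonal, whereas you carry out the comparison directly in the glued configuration; the degeneracies you defer (null or coinciding endpoints) are exactly the situations the paper's formulation in terms of admissible causal triangles absorbs.
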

\begin{proof}
    Say without loss of generality that both curves are future-directed.
    Let us first assume that $\alpha(0)=\beta(0)$. Let $\Delta(\bar{\alpha}(0),\bar{\alpha}(1), \bar{\beta}(1))$ be a comparison triangle for $\Delta(\alpha(0),\alpha(1),\beta(1))$. 
    The intercept theorem yields the elementary equality $\tau(\bar{\alpha}(t),\bar{\beta}(t))=t\tau(\balp(1), \bbet(1))=t\tau(\alpha(1),\beta(1))$. 
    By curvature bounds from above, we infer $\tau(\alpha(t),\beta(t)) \geq \tau(\balp(t), \bbet(t))$, and hence $\tau(\alpha(t),\beta(t)) \geq t\tau(\alpha(1),\beta(1))$, which already shows (\ref{eq: bi concavity}), as $\alpha(0)=\beta(0)$ and $\alpha(t)\leq\beta(t)$. 

    Now assume $\alpha(0) \leq \beta(0)$. Then from $\alpha(0) \leq \beta(0) \ll \beta(1)$, we infer $\alpha(0) \ll \beta(1)$ by the transitivity of $\ll$. 
    Let $\gamma:[0,1] \to X$ be a constant speed parametrized timelike distance realizer from $\alpha(0)$ to $\beta(1)$, i.e., $\gamma(0)=\alpha(0)$ and $\gamma(1)=\beta(1)$. 
    Then $\Delta(\gamma(0), \beta(0), \gamma(1))$ and $\Delta(\gamma(0), \alpha(1), \gamma(1))$
    form two triangles which fit into the special case above (where the first configuration has a reversed time orientation). 
    Thus, we infer $\tau(\alpha(t), \gamma(t)) \geq t\tau(\alpha(1), \gamma(1))$ and $\tau(\gamma(t), \beta(t)) \geq (1-t)\tau(\gamma(0), \beta(0))$. 
    In particular, the intercept theorem also yields that the causal relation in the Minkowski plane is preserved, i.e., we have $\bar{\alpha}(t) \leq \bar{\gamma}(t) \leq \bar{\beta}(t)$ for all $t \in [0,1]$ and hence $\alpha(t) \leq \gamma(t) \leq \beta(t)$ by strict causal curvature bounds.  
    Then via the reverse triangle inequality obtain 
    \begin{align*}
        \tau(\alpha(t), \beta(t)) & \geq 
        \tau(\alpha(t),\gamma(t)) + \tau(\gamma(t), \beta(t)) \\ 
        & \geq t\tau(\alpha(1), \gamma(1)) + (1-t)\tau(\gamma(0), \beta(0)) \\ 
        & = t\tau(\alpha(1), \beta(1)) + (1-t)\tau(\alpha(0), \beta(0)) \, .
    \end{align*}
\end{proof}

\subsection{Concavity of \texorpdfstring{$\tau$}{tau}}
\begin{cor}[Concavity of $\tau$]
Let $X$ be a \LpLS with curvature bounded above by $K$ in the sense of strict causal triangle comparison. Let $U$ be a comparison neighbourhood in $X$. Then $\tau$ is concave on $(U \times U) \cap \tau^{-1}([0,\frac{D_K}{2}))$, i.e., for all $p \in U$ and all timelike geodesics $\gamma$ contained in either $I^{+}(p) \cap U$ or $I^-(p) \cap U$ that have $\tau$-distance less than $\frac{D_K}{2}$ to $p$ we have 
\begin{equation}
\label{eq: concave1}
        \tau(p,\gamma(t)) \geq t \tau(p,\gamma(1)) + (1-t)\tau(p, \gamma(0))
    \end{equation}
    or
    \begin{equation}
    \label{eq: concave2}
        \tau(\gamma(t),p) \geq t \tau(\gamma(1),p) + (1-t)\tau(\gamma(0),p)
    \end{equation}
    for all $t \in [0,1]$. 
\end{cor}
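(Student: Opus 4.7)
The plan is to reduce the concavity statement in $X$ to a concavity statement in the comparison model space $\lm{K}$, combined with strict causal triangle comparison. Without loss of generality, we treat the case $\gamma \subseteq I^+(p) \cap U$, so that $p \ll \gamma(t)$ for every $t \in [0,1]$, and prove \eqref{eq: concave1}; the case $\gamma \subseteq I^-(p) \cap U$ and \eqref{eq: concave2} follow by time-reversal.

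First, I would consider the timelike triangle $\Delta(p, \gamma(0), \gamma(1))$. By hypothesis $\tau(p, \gamma(0)), \tau(p, \gamma(1)) < \frac{D_K}{2}$, and the reverse triangle inequality gives $\tau(\gamma(0), \gamma(1)) \leq \tau(p, \gamma(1)) < \frac{D_K}{2} < D_K$, so the triangle satisfies size bounds for $K$. I would then form a comparison triangle $\Delta(\bar p, \bar\gamma(0), \bar\gamma(1))$ in $\lm{K}$, where $\bar\gamma : [0,1] \to \lm{K}$ is the constant-speed $\tau$-parametrization of $[\bar\gamma(0), \bar\gamma(1)]$, so that $\bar\gamma(t)$ is the comparison point for $\gamma(t)$. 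Strict causal triangle comparison for a $(\leq K)$-bound then yields
\begin{equation*}
\tau(p, \gamma(t)) \geq \tau(\bar p, \bar\gamma(t)) \quad \text{for all } t \in [0,1].
\end{equation*}
Since $\tau(p, \gamma(0)) = \tau(\bar p, \bar\gamma(0))$ and $\tau(p, \gamma(1)) = \tau(\bar p, \bar\gamma(1))$, it suffices to establish concavity of $t \mapsto \tau(\bar p, \bar\gamma(t))$ in the model space $\lm{K}$.

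For the model-space concavity, I would proceed as follows. In the Minkowski case $K=0$, the reverse Minkowski inequality directly yields $|(1-t)u + tv| \geq (1-t)|u| + t|v|$ for timelike vectors $u = \bar\gamma(0) - \bar p$ and $v = \bar\gamma(1) - \bar p$ in the same future cone (recalling that $\bar\gamma$ is affine in $\R^2_1$), which is precisely the claimed concavity; this is equivalently a limiting case of Proposition~\ref{pop: biconcave} obtained by replacing the constant ``curve'' at $\bar p$ with thin parallel timelike segments and letting their length tend to zero. For $K \neq 0$, I would use the explicit descriptions of $\tau$ in $\lm{K}$ collected in the proof of Lemma \ref{lem:modified_distance_in_model_spaces}, reducing concavity of $t \mapsto \tau(\bar p, \bar\gamma(t))$ to a one-variable computation using the trigonometric (for $K<0$) or hyperbolic (for $K>0$) identities that govern $\sn^K$ and $\cn^K$. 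The hypothesis $\tau(\bar p, \bar\gamma(t)) < \frac{D_K}{2}$ enters precisely at this stage, ensuring that we remain within the range on which the second derivative of $\tau(\bar p, \bar\gamma(\cdot))$ has the correct sign.

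The main obstacle is this last step: proving concavity in $\lm{K}$ for $K \neq 0$. While for $K=0$ the argument is essentially the reverse triangle inequality, for $K \neq 0$ one has to exploit the explicit geometry of de Sitter resp.\ anti-de Sitter space together with the $\frac{D_K}{2}$ size constraint, which is exactly the analogue of the classical threshold encountered in Alexandrov geometry (cf.\ \cite[Proposition II.2.2]{BH99}). Once the model-space concavity is in hand, combining it with the triangle comparison inequality above completes the proof.
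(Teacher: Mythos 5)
Your plan follows essentially the same route as the paper's proof: use triangle comparison for the triangle $\Delta(p,\gamma(0),\gamma(1))$ to get $\tau(p,\gamma(t))\geq\tau(\bar p,\bar\gamma(t))$, and then reduce everything to concavity of $t\mapsto\tau(\bar p,\bar\gamma(t))$ along timelike geodesics in the model space $\lm{K}$. The differences are organisational rather than substantive: the paper obtains the $K=0$ case as an elementary consequence of bi-concavity (Proposition \ref{pop: biconcave}), disposes of $K>0$ by monotonicity of upper curvature bounds (curvature $\leq K$ with $K>0$ implies curvature $\leq 0$, via \cite[Lemma 6.1]{AGKS19}), and only carries out the explicit model-space computation for $K=-1$, where $f(t)=\arccos(\cosh(\omega)\cos t)$ is shown to satisfy $f''\leq 0$ exactly on the region corresponding to $\tau<\tfrac{D_K}{2}$. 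You instead propose direct computations in both de Sitter and anti-de Sitter; this works as well --- in de Sitter one finds $f(t)=\arcosh(-t_p\sinh t+y_p\cosh t)$ with $f''=\cosh(f)\bigl(1-(f')^2\bigr)/\sinh(f)\leq 0$, so concavity holds with no size restriction, consistent with $D_K=\infty$ for $K\geq 0$; consequently your remark that the $\tfrac{D_K}{2}$ hypothesis ``enters precisely at this stage'' is accurate only for $K<0$. The single step you leave unexecuted (the model-space concavity for $K\neq 0$) is exactly the elementary second-derivative computation the paper performs for $K=-1$, so the plan is sound and matches the paper's strategy.
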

\begin{proof}
An elementary calculation yields that if $\tau$ is bi-concave in the sense of Proposition \ref{pop: biconcave}, then it is also concave in the sense of (\ref{eq: concave1}) and (\ref{eq: concave2}). 
In particular, the current proposition is valid for $K=0$. Moreover, any space with timelike curvature bounded above by $K>0$ also has timelike curvature bounded above by $0$, which can easily be seen from \cite[Lemma 6.1]{AGKS19}. 

Thus, it is only left to consider the case of $K<0$, which up to scaling we can reduce to $K=-1$. 
In this case, we claim that the time separation on $\lm{-1}$, which in the remainder of this proof shall be denoted by $\btau$, is concave. 
Assuming the claim, we then consider a triangle $\Delta(p, \gamma(0), \gamma(1))$ and the corresponding comparison triangle and compute 
\begin{align*}
    \tau(p, \gamma(t)) \geq & \btau(\bp, \bar{\gamma}(t)) \geq t \btau(\bp, \bgam(1)) + (1-t)\btau(\bp, \bgam(0)) \\ 
    = & t \tau(p, \gamma(1)) + (1-t)\tau(p, \gamma(0)) \, .
\end{align*}
Showing that $\btau$ is concave is an elementary calculation. 
Indeed, after applying a suitable Lorentz transformation and parameter shift, we can assume $p=(\cosh(\omega), 0, \sinh(\omega))$ and $\gamma(t)=(\cos(t), \sin(t), 0)$, so that 
\begin{equation}
    f(t) \coloneqq \btau(p,\gamma(t))=\arccos(\cosh(\omega)\cos(t)) \, ,
\end{equation}
which is defined on $(-\pi,-\arccos(\frac{1}{\cosh(\omega)})] \cup [\arccos(\frac{1}{\cosh(\omega)}),\pi)$.  
Then we get
\begin{equation}
    f''(t)=-\frac{\cosh(\omega) \cos (t) (\cosh(\omega)^2 -1)}{\sqrt{1-\cosh(\omega)^2 \cos (t)^2}^3} \, .
\end{equation}
Clearly, both the numerator and the denominator are positive (for $t \leq \frac{\pi}{2}$ and whenever defined), so we have $f''(t) \leq 0$, showing that $\btau(\bp, \bar{\gamma}(t))$ is concave in $t$ on $[-\frac{\pi}{2}, \frac{\pi}{2}]$ (where it is defined). 
\end{proof}
\begin{chapt*}{Acknowledgments}
We want to thank John Harvey and Lewis Napper for helpful discussions. 
We also acknowledge the kind hospitality of the Erwin Schr\"odinger International Institute for Mathematics and Physics (ESI)
during the workshop \emph{Nonregular Spacetime Geometry}, where parts of this research were carried out. 
This work was supported by research grants P33594 and PAT 1996423 of the Austrian Science Fund FWF.
\end{chapt*}
\phantomsection
\addcontentsline{toc}{section}{References}

\end{document}